\documentclass[11pt,a4paper]{article}

\usepackage[utf8]{inputenc}
\usepackage[T1]{fontenc}
\usepackage[english]{babel} 
\usepackage{amsmath}
\usepackage{amsfonts}
\usepackage{amsthm}
\usepackage{amssymb}
\usepackage{amsbsy}
\usepackage{makeidx}
\usepackage{graphicx}
\usepackage{lmodern}
\usepackage[left=2cm,right=2cm,top=2cm,bottom=2cm]{geometry}
\usepackage{enumitem}
\usepackage{natbib}
\usepackage[normalem]{ulem}
\usepackage{color}
\usepackage{comment}
\usepackage[dvipsnames]{xcolor}
\usepackage{graphicx}
\usepackage{framed}
\usepackage{tikz}
\usepackage{calrsfs}
\DeclareMathAlphabet{\pazocal}{OMS}{zplm}{m}{n}
\usepackage{stmaryrd} % for \{ and \}
\SetSymbolFont{stmry}{bold}{U}{stmry}{m}{n}

\usepackage{scalerel}
\usepackage[usestackEOL]{stackengine}
\def\dashint{\,\ThisStyle{\ensurestackMath{%
  \stackinset{c}{.2\LMpt}{c}{.5\LMpt}{\SavedStyle-}{\SavedStyle\phantom{\int}}}%
  \setbox0=\hbox{$\SavedStyle\int\,$}\kern-\wd0}\int}

\usepackage{fourier-orns}
\usepackage{mathtools}
\usepackage{relsize}
\usepackage{dsfont}
\usepackage{comment}

\usepackage{hyperref}
\hypersetup{
	linktocpage = true,
	colorlinks = true,
	linkcolor = {RoyalBlue},
	urlcolor = {RoyalBlue},
	citecolor = {Green}}

\renewcommand{\P}{\mathbb{P}}
\newcommand{\R}{\mathbb{R}}

\newcommand{\E}{\mathbb{E}}

\newcommand{\Apazo}{\pazocal{A}}

\newcommand{\Cpazo}{\pazocal{C}}

\newcommand{\Epazo}{\pazocal{E}}

\newcommand{\Hpazo}{\pazocal{H}}

\newcommand{\Jpazo}{\pazocal{J}}
\newcommand{\Kpazo}{\pazocal{K}}
\newcommand{\Lpazo}{\pazocal{L}}
\newcommand{\Mpazo}{\pazocal{M}}

\newcommand{\Ppazo}{\pazocal{P}}
\newcommand{\Qpazo}{\pazocal{Q}}
\newcommand{\Rpazo}{\pazocal{R}}
\newcommand{\Spazo}{\pazocal{S}}
\newcommand{\Tpazo}{\pazocal{T}}
\newcommand{\Upazo}{\pazocal{U}}
\newcommand{\Vpazo}{\pazocal{V}}

\newcommand{\Xpazo}{\pazocal{X}}

\newcommand{\Bcal}{\mathcal{B}}

\newcommand{\Hcal}{\mathcal{H}}

\newcommand{\Lcal}{\mathcal{L}}

\newcommand{\Pcal}{\mathcal{P}}

\newcommand{\Ucal}{\mathcal{U}}
\newcommand{\Vcal}{\mathcal{V}}

\newcommand{\Id}{\textnormal{Id}}

\newcommand{\D}{\textnormal{D}}

\newcommand{\Tan}{\textnormal{Tan}}

\newcommand{\supp}{\textnormal{supp}}

\newcommand{\AC}{\textnormal{AC}}

\newcommand{\Div}{\textnormal{div}}
\newcommand{\Hess}{\textnormal{Hess} \,}

\newcommand{\Adm}{\mathrm{Adm}}

\newcommand{\Erm}{\mathrm{E}}
\newcommand{\Lrm}{\mathrm{L}}

\newcommand{\dsf}{\textnormal{\textsf{d}}}

\newcommand{\Xsf}{\textnormal{\textsf{X}}}
\newcommand{\Ysf}{\textnormal{\textsf{Y}}}

\newcommand{\Usf}{\textnormal{\textsf{U}}}
\newcommand{\Vsf}{\textnormal{\textsf{V}}}
\newcommand{\Hsf}{\textnormal{\textsf{H}}}

\newcommand{\Bgamma}{\boldsymbol{\gamma}}

\newcommand{\Bnu}{\boldsymbol{\nu}}

\renewcommand{\epsilon}{\varepsilon}

\newcommand{\INTDom}[3]{\int_{#2} #1 \,\mathrm{d} #3}

\newcommand{\INTSeg}[4]{\int_{#3}^{#4} #1 \,\mathrm{d} #2}
\newcommand{\NormL}[3]{\| #1 \|_{L^{#2}(#3)}}
\newcommand{\NormLbis}[3]{\| #1 \|_{L^2_{#3}(#2)}}
\newcommand{\NormC}[3]{\left\| #1  \right\|_{C^{#2}(#3)}}

\newcommand{\derv}[2]{\frac{\textnormal{d} #1}{ \textnormal{d} #2}}

\newcommand{\deltaX}{\delta \hspace{-0.04cm}X}
\newcommand{\deltaPsi}{\delta \hspace{-0.015cm}\Psi}
\newcommand{\deltaY}{\delta Y}

\newcommand{\deltaLambda}{\delta \hspace{-0.02cm} \Lambda}
\newcommand{\deltau}{\delta \hspace{-0.005cm}u}

\newtheorem{Def}{Definition}[section]
\newtheorem{thm}[Def]{Theorem}
\newtheorem{prop}[Def]{Proposition}
\newtheorem{rmk}[Def]{Remark}
\newtheorem{lem}[Def]{Lemma}
\newtheorem{cor}[Def]{Corollary}

\newtheorem{taggedhypx}{Hypotheses}
\newenvironment{taggedhyp}[1]
    {\renewcommand\thetaggedhypx{#1}\taggedhypx}
    {\endtaggedhypx}

\title{Exponential Turnpike Theorems for Nonlinear Deterministic Meanfield Optimal Control Problems}

\author{Benoît Bonnet-Weill\footnote{Laboratoire des Signaux et Systèmes, Université Paris-Saclay, CNRS, CentraleSupélec, 91190 Gif-sur-Yvette, France. \textit{Email:} \texttt{benoit.bonnet-weill@centralesupelec.fr}}\,, Giovanni Colombo\footnote{Dipartimento di Matematica ``Tullio Levi-Civita'', Università degli Studi di Padova, 63 via Trieste, Padova, Italy.}\,, Denis Shishmintsev$^{\dagger}$\, and Emmanuel Trélat\footnote{Sorbonne Universit\'e, Universit\'e Paris Cit\'e, CNRS, Inria, Laboratoire Jacques-Louis Lions, LJLL, F-75005 Paris, France.}}

\date{}

\begin{document}

\maketitle

\begin{abstract}
In this article, we establish exponential turnpike theorems for a class of nonlinear deterministic meanfield optimal control problems. We carry out our analysis simultaneously in the so-called Lagrangian and Eulerian frameworks. In the Lagrangian setting, the problem is lifted to a Hilbert space of random variables, and we prove an exponential turnpike theorem by combining first-order optimality conditions, a second-order expansion of the lifted Hamiltonian, and an operator Riccati diagonalization argument. In the Eulerian setting, we derive intrinsic KKT conditions for the static constrained problem, and show how the Eulerian second-order hypotheses split into a horizontal part, transferred by unitary conjugation to the lifted space, and a vertical part which reduces to uniform pointwise stabilizability and detectability conditions on multiplication operators. This yields an exponential turnpike theorem in the Wasserstein space for optimal Pontryagin triples. Along the way, we %provide 
explicit the link between Wasserstein Hessians and their Lagrangian lifts, and provide several remarks clarifying the role of occupation measures, local Eulerian minimizers, and control constraints in our results.
\end{abstract}

{\footnotesize
\textbf{Keywords :} Meanfield Optimal Control, Optimal Transport, Lagrangian Lift, Pontryagin Maximum Principle, KKT Conditions, Exponential Turnpike Property, Riccati Theory.

\vspace{0.25cm}

\textbf{MSC2020 Subject Classification :} 49J21, 49K15, 49Q22, 93C25, 93D15.
}

\tableofcontents

%%%%%%%%%%%%%%%%%%%%%%%%%%%%%%%%%%%%%%%%%%%%%%%%%%%%%%%%%%%%%%%%%%%%%%%
%						   NEW SECTION AHEAD						   %
%%%%%%%%%%%%%%%%%%%%%%%%%%%%%%%%%%%%%%%%%%%%%%%%%%%%%%%%%%%%%%%%%%%%%%%

\section{Introduction}
\setcounter{equation}{0} \renewcommand{\theequation}{\thesection.\arabic{equation}}

The turnpike phenomenon is one of the most robust asymptotic signatures of optimal control. Roughly speaking, the latter asserts that optimal solutions of large horizon problems spend most of their time close to a distinguished point -- which is stationary for the limit ergodic problem --, up to exponentially thin boundary layers near the initial and terminal times. Since the seminal nonlinear finite-dimensional result of \cite{Trelat2015} and its Hilbertian counterpart \cite{Trelat2018}, turnpike theory has developed in many directions; see the recent survey \cite{TrelatZuazuaSurvey} and references therein 

The purpose of this article is to generalize this tried theory to deterministic meanfield optimal control problems of the form
\begin{equation*}
(\Ppazo_{\Erm}) ~~ \left\{
\begin{aligned}
\min_{(\mu,u_{\Erm})} & \bigg[ \INTSeg{\INTDom{L \Big( \mu(t),x,u_{\Erm}(t,x) \Big)}{\R^d}{\mu(t)(x)}}{t}{0}{T} + \varphi(\mu(T)) \bigg], \\
\textnormal{s.t.}~ & \left\{
\begin{aligned}
& \partial_t \mu(t) + \Div_x \Big( v(\mu(t),\cdot,u_{\Erm}(t,\cdot)) \mu(t) \Big) = 0, \\
& \mu(0) = \mu^0.
\end{aligned}
\right.
\end{aligned}
\right.
\end{equation*}
Therein $v : \Pcal_2(\R^d) \times \R^d \times U \to \R^d$ is a \textit{nonlocal vector field} describing the evolution of the system, that is controlled by means of a Lebesgue-Borel control signal $u_{\Erm} : [0,T] \times \R^d \to U$, whereas $L : \Pcal_2(\R^d) \times \R^d \times U \to \R$ and $\varphi : \Pcal_2(\R^d) \to \R$ are running and terminal costs, respectively. There is a very large literature on problems of the form $(\Ppazo_{\Erm})$, or small variations thereof, which traces back to the foundation of the theory of \textit{meanfield games} \cite{Lasry2007,Huang2006}. The latter was subsequently revisited through a control-theoretic viewpoint, first in a stochastic context \cite{Bensoussan2013,Carmona2013}, and then in a more deterministic setting \cite{FornasierPR2014,Fornasier2014}. To study meanfield optimal control problems, two general methodologies have been proposed during the past ten to fifteen years. \vspace{-0.15cm}
\begin{enumerate}[wide, labelindent=0pt]
\item[$\diamond$] On the one hand, one may study the problem in its so-called \textit{Eulerian formulation}, which operates directly at the level of probability measures by posing the problem in Wasserstein spaces and using geometric and functional analytic tools of optimal transport theory \cite{AGS,OTAM}. It is arguably the more intrinsic framework, and saw the development of deep existence results \cite{Cavagnari2022,Fornasier2019} along with the derivation of Pontryagin \cite{Bongini2017,Burger2021,SetValuedPMP,Pogodaev2016} and Hamilton-Jacobi-Bellman \cite{Aussedat2024,Badreddine2022,Marigonda2018} optimality conditions conditions. This framework is particularly well-adapted to investigate problems in which the controls do not depend on the space variable, which arise naturally in Follow-the-Leader \cite{FornasierPR2014} and Machine Learning \cite{E2019} models. \vspace{-0.15cm}
\item[$\diamond$] On the other hand, one may rephrase the problem using the so-called \textit{Lagrangian formulation}, which provides an equivalent reformulation as an optimal control problem in a Hilbert space of square-integrable random variables. This idea, which goes back to \cite{Cardaliaguet2010,Carmona2018}, allows for a cleaner treatment of problems in which the admissible controls are vector fields instead of time-dependent signals, due to the fact that such closed-loop controls become open-loop when lifted. For that reason, they recently gained a fair amount of steam in the field, again to prove Pontryagin optimality conditions \cite{Averboukh2025} and study Hamilton-Jacobi-Bellman equations \cite{Capuani2025,Jimenez2023}, but also to revisit Wasserstein geometry \cite{Bertucci2024,Bertucci2025}. 
\end{enumerate}
We stress that rigorous connections between the Eulerian and Lagrangian formalisms were proven in the ground-breaking work \cite{Cavagnari2022} -- see also the extension \cite{Averboukh2025} --, where it is shown that the corresponding value functions of both problems coincide, and that precise transfer mechanisms between trajectory-control pairs in both settings are available. \medskip

As the turnpike phenomenon is about optimal trajectories spending a long time in the vicinity of a stationary solution, we naturally consider the companion static problem
\begin{equation*}
(\Ppazo_{\Erm}^s) ~~ 
\left\{
\begin{aligned}
\min_{(\mu,u_{\Erm})} & \INTDom{L \Big( \mu,x,u_{\Erm}(x) \Big)}{\R^d}{\mu(x)} , \\
\textnormal{s.t.}~ & v(\mu,u_{\Erm}) = 0 ~~ \text{in $L^2_{\mu}(\R^d,\R^d)$}.
\end{aligned}
\right.
\end{equation*}
Therein, the functional equality should be interpreted in the sense that $v(\mu,x,u_{\Erm}(x)) = 0$ for $\mu$-almost every $x \in \R^d$, so that being an admissible pair means that the support of the candidate measure is effectively a collection of equilibrium points for the controlled vector field. Perhaps surprisingly, this is not the equality constraint one might formally derive from $(\Ppazo_{\Erm})$ by simply setting $\Div_x(v(\mu,u_{\Erm})) = 0$ in the sense of distributions. Although the latter makes perfect sense at the PDE level -- and is highly reminiscent of the usual Beckmann problem \cite[Section 4.2]{OTAM} --, its admissible measures are stationary distributions instead of collections of equilibrium points, and thus harder to approach via flow techniques. We stress nonetheless that the equality constraint we consider is already fairly challenging to handle, and not covered by existing works on optimization in measure spaces.  

%Passing from one framework to the other is classical at the level of admissible trajectory-control pairs and cost functions, see e.g. \cite{Averboukh2025,Cavagnari2022}, but the second-order analysis needed for turnpike estimates is subtler and requires a careful comparison between lifted Hessians and Wasserstein Hessians.

\paragraph*{Overview of contributions.} In this paper, we prove exponential turnpike estimates for smooth constrained problems with nonlinear meanfield interactions of the form $(\Ppazo_{\Erm})$, under second-order assumptions expressed on the stationary Hamiltonian. %Our first contribution is a systematic review of the relationship between first- and second-order Eulerian and Lagrangian differential structures. At the level of first order, we recall the classical lift formula for Wasserstein gradients derived in \cite{Gangbo2019} and subsequently simplified in \cite{Jimenez2023}. At second order, we revisit a representation formula for lifted Hessians proposed in \cite{Chassagneux2022} in terms of the local and nonlocal coefficients of the Wasserstein Hessian. This leads to a convenient operator-theoretic viewpoint: the lifted Hessian is the sum of a multiplication operator and an integral operator, and on the horizontal subspace generated by the stationary realization, it is unitarily conjugated to the intrinsic Wasserstein Hessian.
Our first contribution is a systematic review of the relationship between first- and second-order Eulerian and Lagrangian differential structures. At the level of first-order calculus, we simply recall the classical lift formula for Wasserstein gradients derived in \cite{Gangbo2019} and subsequently simplified in \cite{Jimenez2023}. At second-order, we revisit a representation formula for lifted Wasserstein Hessians proposed in \cite{Chassagneux2022} in terms of the local and nonlocal coefficients of the Wasserstein Hessian \cite{Choi2017,villani1}. While studying second-order calculus in Wasserstein spaces and its connections with Hilbertian lifts is evidently not new, our derivation of exponential turnpike estimates requires a subtle analysis of the relationships between Hessians operators in the Eulerian and Lagrangian frameworks, and in particular of their respective spectra. This leads us to exhibiting the following deep operator-theoretic connection: the lifted Hessian is the sum of a multiplication operator and an integral operator, and on the horizontal subspace generated by a stationary realization, it is unitarily conjugated to the intrinsic Wasserstein Hessian.

Our second contribution consists in new first-order optimality conditions for the static Eulerian problem $(\Ppazo_{\Erm}^s)$, exposed in Theorem \ref{thm:KKTEulerian}. To this end, we first recollect classical KKT conditions for the static Lagrangian problem in the lifted Hilbert space, and justify their transferability to the Eulerian constrained problem. The Eulerian statement is tailored for infinite-dimensional constraints, and with the view of serving as a linearisation point for the Wasserstein PMP, and is therefore naturally expressed in terms of a state-costate measure in the spirit of \cite{PMPWassConst,SetValuedPMP,PMPWass} (see also \cite{Lanzetti2024bis}). A key point here is that the relevant notion of local Eulerian minimizer only involves proximity of the state measure in Wasserstein distance. This avoids imposing an artificial topology on Eulerian controls, whose canonical reconstruction from Lagrangian ones via disintegration is not stable in general. We stress that a stronger local topology is available at the level of occupation measures, but it is not needed for the results of the present paper.

Our third and main contribution is the proof of exponential turnpike theorems in both Lagrangian and Eulerian formulations. In Theorem \ref{thm:TurnpikeLagrangian}, we start by establishing an abstract Hilbertian turnpike theorem in the Lagrangian setting. The latter is prescribed by the Hessian of the lifted Hamiltonian at a stationary Pontryagin triple, and its proofs carefully leverages linearization techniques along with results of algebraic Riccati theory in infinite dimension inspired by \cite{Trelat2018}. In Theorem \ref{thm:TurnpikeEulerian}, we then formulate intrinsic Eulerian turnpike estimates for $(\Ppazo_{\Erm})$, which hinge upon hypotheses formulated directly at the level of the Wasserstein Hessian of a suitable relaxed Hamiltonian. Those can be split into two parts: a horizontal contribution, which transfers to the lifted space by unitary conjugation, and a vertical one, which reduces to uniform pointwise stabilizability and detectability conditions for the multiplication operators acting on the orthogonal complement. This decomposition explains precisely which directions are seen by the Eulerian Hessian, and which additional finite-dimensional conditions are needed to control singular orthogonal perturbations in the lifted space.

\paragraph*{Related works.} For meanfield control problems, turnpike results are presently available essentially in linear-quadratic or closely related settings; see for instance \cite{Gugat2024,SunYong2024,BayraktarJian2025,BayraktarJian2026}, as well as the recent particle-to-hydrodynamic perspective in \cite{HertyZhou2025}. By contrast, the genuinely nonlinear deterministic case seems to have remained largely open. 

\paragraph*{Organization of the paper.} Section \ref{section:Preliminaries} collects the basic facts on Wasserstein calculus, liftings and meanfield Pontryagin theory that we shall employ to derive turnpike estimates. Section \ref{section:LagrangianEulerian} contains the static KKT analysis in the Lagrangian and Eulerian settings. In Section \ref{section:Turnpike}, we prove the exponential turnpike theorem in the lifted Hilbert framework and then transfer it to Wasserstein space. Appendix \ref{section:AppendixHess} contains the proof of the second-order lift formula. We also include several remarks on the role of control constraints and on the intrinsic tangent-space viewpoint. 

%%%%%%%%%%%%%%%%%%%%%%%%%%%%%%%%%%%%%%%%%%%%%%%%%%%%%%%%%%%%%%%%%%%%%%%	
%  							NEW SECTION AHEAD	     		 		  %
%%%%%%%%%%%%%%%%%%%%%%%%%%%%%%%%%%%%%%%%%%%%%%%%%%%%%%%%%%%%%%%%%%%%%%%

\section{Preliminaries}
\label{section:Preliminaries}

\setcounter{equation}{0} \renewcommand{\theequation}{\thesection.\arabic{equation}}

In this section, we recollect basic material on optimal transport, integration and probability, and Wasserstein calculus, for which we broadly refer to the monographs \cite{villani1}, \cite{AnalysisBanachSpaces} and \cite{Chassagneux2022}. 

%%%%%%%%%%%%%%%%%%%%%%%%%%%%%%%%%%%%%%%%%%%%%%%%%%%%%%%%%%%%%%%%%%%%%%%	

\paragraph*{Function spaces, integration and optimal transport.} 

 Given two separable Hilbert spaces $(\Xsf,\langle\cdot,\cdot\rangle_{\Xsf})$,$(\Ysf,\langle\cdot,\cdot\rangle_{\Ysf})$, we denote by $\Lpazo(\Xsf,\Ysf)$ the space of bounded linear operators from $\Xsf$ to $\Ysf$. Given two subspaces $\Usf \subset \Xsf$ and $\Vsf \subset \Ysf$, we also write $C^0_b(\Usf,\Vsf)$ and $C^1_c(\Usf,\Vsf)$ for the spaces of continuous bounded maps and continuously differentiable maps with compact support. Given a real number $T > 0$ and a Polish space $(\Xpazo,\dsf_{\Xpazo}(\cdot,\cdot))$, we denote by $\AC^2([0,T],\Xpazo)$ the space of all $\Lcal^1$-measurable maps $x : [0,T] \to \Xpazo$ satisfying
\begin{equation*}
\dsf_{\Xpazo}(x(t_1),x(t_2)) \leq \INTSeg{m(s)}{s}{t_1}{t_2}
\end{equation*}
for all times $0 \leq t_1 < t_2 \leq T$ and some $m(\cdot) \in L^2([0,T],\R_+)$. 
 
Given a probability measure $\mu \in \Pcal(\Xsf)$, we write $L^2_{\mu}(\Xsf,\Ysf)$ for the usual Hilbert space of square-integrable maps from $\Xsf$ to $\Ysf$, endowed with its canonical norm. When there is no confusion, we shall identify a map with its equivalence class, and work with its Borel representative whenever needed \cite[Corollary 6.5.6]{Bogachev}. %may omit the target space from the notation. 
In what follows, we shall use the notation $\Gamma(\mu,\nu)$ for the set of \textit{transport plans} between two probability measures $\mu,\nu \in \Pcal(\Xsf)$, defined by 
\begin{equation*}
\Gamma(\mu,\nu) := \Big\{ \gamma \in \Pcal(\Xsf^2) ~\,\mathrm{s.t.}~ \pi^1_{\sharp}\gamma = \mu ~~\text{and}~~ \pi^2_{\sharp}\gamma = \nu \Big\}
\end{equation*}
where $\pi^1,\pi^2 : \Xsf \times \Xsf \to \Xsf$ are the projections onto the first and second component, and ``$\sharp$'' stands for the usual image measure operation. We denote by $\Pcal_2(\Xsf)$ the subset of probability measures with a finite moment of order 2, and recall the definition of the Wasserstein distance
\begin{equation*}
W_2(\mu,\nu) := \inf \bigg\{\bigg( \INTDom{|x-y|^2}{\Xsf^2}{\gamma(x,y)} \bigg)^{1/2} ~\mathrm{s.t.}~ \gamma \in \Gamma(\mu,\nu) \bigg\}
\end{equation*}
between two measures $\mu,\nu \in \Pcal_2(\R^d)$. Below, we let $\Gamma_o(\mu,\nu)$ stand for the corresponding set of optimal plans, which is nonempty by the direct method of the calculus of variations \cite[Chapter 4]{villani1}. 

\paragraph{Random variables and Lagrangian lifts.} Throughout the paper, we let $(\Omega,\Apazo,\P)$ be a standard atomless probability space, namely $\Omega$ is (isomorphic to) a Polish space, $\Apazo$ is the $\P$-completion of the native Borel $\sigma$-algebra $\Bcal(\Omega)$, and $\P$ has no atoms. We shall denote by $L^2_{\P}(\Omega,\Xsf)$ the space of square-integrable $\Xsf$-valued random variables on $(\Omega,\Apazo,\P)$. Given a random variable $X \in L^2_{\P}(\Omega,\R^d)$, we write $\sigma(X)$ for its induced $\sigma$-algebra, and recall that the Hilbert space $L^2_{\P}(\Omega,\R^d;\sigma(X))$ of $X$-adapted square-integrable random variables can be characterized by  
\begin{equation}
\label{eq:AdaptedRandomVariable}
L^2_{\P}(\Omega,\R^d;\sigma(X)) = \overline{\Big\{ \xi \circ X ~\,\textnormal{s.t.}~ \xi : \R^d \to \R^d ~~\text{is bounded and Borel} \Big\}}^{L^2_{\P}(\Omega,\R^d)}.
\end{equation}
Recall then that the conditional expectation $\E[\cdot|\sigma(X)] : L^2_{\P}(\Omega,\R^d) \to L^2_{\P}(\Omega,\R^d;\sigma(X))$ with respect to $\sigma(X)$ is defined as the orthogonal projection onto $L^2_{\P}(\Omega,\R^d;\sigma(X))$. In particular given $H \in L^2_{\P}(\Omega,\R^d)$, one has that $H = \E[H|\sigma(X)] + H^{\perp}_X$ where
\begin{equation}
\label{eq:CondExpDecomposition}
\langle\xi \circ X , H^{\perp}_X \rangle_{L^2_{\P}(\Omega,\R^d)} = 0 
\end{equation}
for every bounded Borel map $\xi : \R^d \to \R^d$. 

Given a measure $\mu \in \Pcal_2(\Xsf)$, there exists by the generalization of Skorokhod's theorem proven in \cite{Berti2007} (and recollected in \cite[Proposition 2.1]{Cavagnari2022}) a random variable $X \in L^2_{\P}(\Omega,\Xsf)$ such that $X_{\sharp}\P = \mu$. This induces the equivalence relation 
\begin{equation*}
X \sim Y \qquad \text{if and only if} \qquad X_{\sharp} \P = Y_{\sharp} \P 
\end{equation*}
over $L^2_{\P}(\Omega,\R^d)$, through which the definition of the Wasserstein distance can be recast as 
\begin{equation*}
W_2(\mu,\nu) := \bigg\{ \|X-Y\|_{L^2_{\P}(\Omega,\R^d)} ~\,\textnormal{s.t.}~ X_{\sharp} \P = \mu ~~\text{and}~~ Y_{\sharp}\P = \nu \bigg\}. 
\end{equation*}
This yields in particular the identification of $(\Pcal_2(\R^d),W_2(\cdot,\cdot))$ as the quotient $L^2_{\P}(\Omega,\R^d) / \hspace{-0.1cm} \sim$. 
 %This simple observation is the backbone of the Eulerian-Lagrangian correspondence used all along the paper. In particular, the Wasserstein differential structure of $\Phi$ is transported to the lifted Hilbert space through composition operators, while conversely the lifted formulation provides a convenient Hilbertian realization of intrinsic objects on $\Pcal_2(\R^d)$.

%%%%%%%%%%%%%%%%%%%%%%%%%%%%%%%%%%%%%%%%%%%%%%%%%%%%%%%%%%%%%%%%%%%%%%%	

\paragraph*{First- and second-order calculus over $\Pcal_2(\R^d)$ and $L^2_{\P}(\Omega,\R^d)$.}

In the following definition, we explicit the notions of first- and second-order derivatives for a map over $(\Pcal_2(\R^d),W_2(\cdot,\cdot))$ that we shall use in the sequel. Therein, given two measures $\mu,\nu \in \Pcal_2(\R^d)$, we let 
\begin{equation}
\label{eq:WeightedWass}
W_{2,\gamma}(\mu,\nu) := \bigg( \INTDom{|x-y|^2}{\R^{2d}}{\gamma(x,y)} \bigg)^{1/2}
\end{equation}
stand for the weighted Wasserstein metric along an admissible plan $\gamma \in \Gamma(\mu,\nu)$. Note in particular that $W_{2,\gamma}(\mu,\nu) = W_2(\mu,\nu)$ whenever $\gamma \in \Gamma_o(\mu,\nu)$. 

\begin{Def}[Fréchet differentiable functions over $\Pcal_2(\R^d)$]
\label{def:WassGrad}
We say that a mapping $\Phi : \Pcal_2(\R^d) \to \R$ is \emph{Fréchet differentiable} at $\mu \in \Pcal_2(\R^d)$ if there exists an element $\nabla_{\mu} \Phi(\mu) \in \Tan_{\mu} \Pcal_2(\R^d)$ such that 
\begin{equation}
\label{eq:FréchetDiff}
\Phi(\nu) = \Phi(\mu) + \INTDom{\langle \nabla_{\mu} \Phi(\mu)(x) , y-x \rangle}{\R^{2d}}{\gamma(x,y)} + \mathrm{o}(W_{2,\gamma}(\mu,\nu))
\end{equation}
for each $\nu \in \Pcal_2(\R^d)$ and every admissible plan $\gamma \in \Gamma(\mu,\nu)$. More generally, if $\Phi : \Pcal_2(\R^d) \to \R^d$ for some $m \geq 1$, then we say that it is Fréchet differentiable at $\mu \in \Pcal_2(\R^d)$ provided that its components are Fréchet differentiable, and we then let $\nabla_{\mu} \Phi(\mu) := (\nabla_{\mu} \Phi_i(\mu))_{1 \leq i \leq m} \in L^2(\R^d,\R^{d \times m};\mu)$. 
\end{Def}

Before moving on, observe that every mapping $\Phi : \Pcal_2(\R^d) \to \R$ that is Fréchet differentiable at $\mu\in\Pcal_2(\R^d)$ is clearly continuous at that point, since 
\begin{equation*}
|\Phi(\nu) - \Phi(\mu)| \leq W_2(\mu,\nu) \Big( \NormLbis{\nabla_{\mu}\Phi(\mu)}{\R^d}{\mu} + \mathrm{o}(1) \Big)
\end{equation*}
by choosing optimal plans $\gamma \in \Gamma_o(\mu,\nu)$ in the identity \eqref{eq:FréchetDiff}.

\begin{rmk}[On the definition of Wasserstein derivative]
While the above definition of Wasserstein gradients may seem more restrictive than its classical geometric counterpart involving optimal plans introduced in \cite{AGS,AmbrosioGangbo}, it was shown e.g. in \cite[Proposition 3.6]{SetValuedPMP} (see also \cite[Section 3.2]{Jimenez2023} and \cite{Lanzetti2025}) that they are in fact equivalent.
\end{rmk}

Recall that a map $\widetilde{\Phi} : L^2_{\P}(\Omega,\R^d) \to \R$ is said to be law-invariant if it is of the form $\widetilde{\Phi}(X) = \Phi(X_{\sharp}\P)$ for some functional $\Phi : \Pcal_2(\R^d) \to \R$, called the Eulerian representative of $\widetilde{\Phi}$. It is now commonly known that the Fréchet differentiability of a law-invariant functional over $L^2_{\P}(\Omega,\R^d)$ is equivalent to that of its Eulerian representative on $\Pcal_2(\R^d)$ in the sense of Definition \ref{def:WassGrad}. We refer to the excellent works \cite{Gangbo2019,Jimenez2023} for more details about this fairly nontrivial result. Below, we provide a statement and short proof of the direct implication of this claim, as it happens to follow elementarily from Definition \ref{def:WassGrad} and we will primarily use it in the sequel.

\begin{prop}[First-order differentiability and gradient formula]
\label{prop:WassGrad}
Suppose that $\Phi : \Pcal_2(\R^d) \to \R$ is Fréchet differentiable in the sense of Definition \ref{def:WassGrad}. Then, its lift $\widetilde{\Phi} : L^2_{\P}(\Omega,\R^d) \to \R$ is Fréchet differentiable in the usual sense, and it holds that
\begin{equation*}
\nabla \widetilde{\Phi}(X) = \nabla_{\mu} \Phi(X_{\sharp}\P) \circ X
\end{equation*}
for every $X \in L^2_{\P}(\Omega,\R^d)$.  
\end{prop}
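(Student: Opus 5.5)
The plan is to use Definition \ref{def:WassGrad} with the coupling induced by the random variables themselves. Fix $X \in L^2_{\P}(\Omega,\R^d)$, set $\mu := X_{\sharp}\P$, and take any $H \in L^2_{\P}(\Omega,\R^d)$. Let $\nu := (X+H)_{\sharp}\P$ and
\begin{equation*}
\gamma := (X,X+H)_{\sharp}\P \in \Gamma(\mu,\nu).
\end{equation*}
Note that $\gamma$ is admissible but in general not optimal. This does not matter, since Definition \ref{def:WassGrad} requires the expansion \eqref{eq:FréchetDiff} for \emph{every} admissible plan, which is exactly why this elementary route works.

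First, compute the weighted distance along this plan:
\begin{equation*}
W_{2,\gamma}(\mu,\nu)^2 = \INTDom{|x-y|^2}{\R^{2d}}{\gamma(x,y)} = \E\big[|H|^2\big] = \|H\|_{L^2_{\P}(\Omega,\R^d)}^2 .
\end{equation*}
Second, rewrite the linear term by the change of variables formula:
\begin{equation*}
\INTDom{\langle \nabla_{\mu}\Phi(\mu)(x), y-x\rangle}{\R^{2d}}{\gamma(x,y)} = \E\Big[\big\langle \nabla_{\mu}\Phi(\mu)\circ X , H\big\rangle\Big] = \big\langle \nabla_{\mu}\Phi(\mu)\circ X , H \big\rangle_{L^2_{\P}(\Omega,\R^d)} .
\end{equation*}
Here $\nabla_{\mu}\Phi(\mu)\circ X$ is a well-defined element of $L^2_{\P}(\Omega,\R^d)$. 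Its class does not depend on the Borel representative chosen, since two representatives agreeing $\mu$-a.e. give compositions agreeing $\P$-a.s. Its norm equals $\NormLbis{\nabla_{\mu}\Phi(\mu)}{\R^d}{\mu}$ because $X_\sharp\P=\mu$. Hence $H \mapsto \langle \nabla_{\mu}\Phi(\mu)\circ X, H\rangle$ is a bounded linear functional.

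Plugging both identities into \eqref{eq:FréchetDiff} and using $\widetilde{\Phi}(X+H) = \Phi(\nu)$, $\widetilde{\Phi}(X)=\Phi(\mu)$, we obtain
\begin{equation*}
\widetilde{\Phi}(X+H) = \widetilde{\Phi}(X) + \big\langle \nabla_{\mu}\Phi(\mu)\circ X , H\big\rangle_{L^2_{\P}(\Omega,\R^d)} + \mathrm{o}\big(\|H\|_{L^2_{\P}(\Omega,\R^d)}\big).
\end{equation*}
This is Fréchet differentiability of the lift, and the Riesz representative of the derivative is $\nabla_{\mu}\Phi(X_\sharp\P)\circ X$.

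The only delicate point is the uniformity of the remainder. The $\mathrm{o}(W_{2,\gamma}(\mu,\nu))$ term in Definition \ref{def:WassGrad} must be read as uniform over plans: for every $\epsilon>0$ there is $\delta>0$ such that the remainder is at most $\epsilon W_{2,\gamma}(\mu,\nu)$ whenever $W_{2,\gamma}(\mu,\nu)<\delta$, independently of $\nu$ and $\gamma$. I will state this reading explicitly, since it is the natural meaning of the definition. Under it, $W_{2,\gamma}=\|H\|$ makes the transfer immediate.

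If one instead reads the remainder only along sequences, the same conclusion follows by contradiction. Take $H_n\to 0$ with remainder greater than $\epsilon\|H_n\|$. The associated pairs $(\nu_n,\gamma_n)$ satisfy $W_{2,\gamma_n}(\mu,\nu_n)=\|H_n\|\to0$, which contradicts the definition applied along this sequence.
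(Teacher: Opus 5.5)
Your proposal is correct and follows the paper's proof exactly: both use the (generally non-optimal) coupling $\gamma := (X,X+H)_{\sharp}\P$, for which $W_{2,\gamma}(\mu,\nu) = \|H\|_{L^2_{\P}(\Omega,\R^d)}$, and then rewrite the linear term of \eqref{eq:FréchetDiff} by a change of variables. Your explicit observation that the $\mathrm{o}(W_{2,\gamma}(\mu,\nu))$ remainder in Definition \ref{def:WassGrad} must be uniform over all pairs $(\nu,\gamma)$ makes precise a point the paper leaves implicit.
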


\begin{proof}
Given any pair of random variables $X,H \in L^2_{\P}(\Omega,\R^d)$, let $\gamma := (X,X+H)_{\sharp} \P$ and simply note that, from the Fréchet differentiability of $\Phi : \Pcal_2(\R^d) \to \R$ at $X_{\sharp} \P \in \Pcal_2(\R^d)$ combined with the definition \eqref{eq:WeightedWass} of the weighted Wasserstein metric, it follows that
\begin{equation*}
\begin{aligned}
\widetilde{\Phi}(X+H) & = \Phi((X+H)_{\sharp} \P) \\
& = \Phi(X_{\sharp} \P) + \INTDom{\langle \nabla_{\mu} \Phi(X_{\sharp} \P)(x) , y-x \rangle}{\R^{2d}}{\gamma(x,y)} + \mathrm{o}(W_{2,\gamma}(\mu,\nu)) \\
& = \widetilde{\Phi}(X) + \INTDom{\Big\langle \nabla_{\mu} \Phi(X_{\sharp} \P)(X(\omega)) , H(\omega) \Big\rangle}{\Omega}{\P(\omega)} + \mathrm{o} \Big( \NormL{H}{2}{\Omega,\R^d;\P} \Big). 
\end{aligned}
\end{equation*}
Since $X,H \in L^2_{\P}(\Omega,\R^d)$ were arbitrary, this concludes the proof. 
\end{proof}

%The above result provides us with the crucial information that derivatives of rearrangement-invariant maps at some $X \in L^2_{\P}(\Omega,\R^d)$ belongs to the subspace of admissible directions 
%%
%\begin{equation}
%\Trm_X L^2_{\P}(\Omega,\R^d) := \Big\{ \xi \circ X ~\,\textnormal{s.t.}~ \xi \in \Tan_{X_{\sharp} \P} \Pcal_2(\R^d) \Big\}
%\end{equation}
%%
%which was already identified to be the good lifting space for tangent vectors in \cite{Gangbo2019}. 
In the sequel, we will also need a notion of second-order derivative over $\Pcal_2(\R^d)$. We adopt here the terminology of \cite{Chassagneux2022}, but also refer the reader e.g. to \cite{Chow2019} for an equivalent and more geometric-flavoured definition of these objects in terms of Lie derivatives. 

\begin{Def}[Twice continuously differentiable functions over $\Pcal_2(\R^d)$]
\label{def:WassHess}
We say that a map $\Phi : \Pcal_2(\R^d) \to \R$ is \emph{twice continuously differentiable} if the mapping $(\mu,x) \in \Pcal_2(\R^d) \times \R^d \to \nabla_{\mu} \Phi(\mu)(x) \in \R^d$ is continuous and Fréchet differentiable, with the second-order variations 
\begin{equation*}
(\mu,x) \in \Pcal_2(\R^d) \times \R^d \mapsto \D_x \nabla_{\mu} \Phi(\mu)(x) \in \R^{d \times d}
\end{equation*}
and 
\begin{equation*}
(\mu,x,y) \in \Pcal_2(\R^d) \times \R^d \times \R^d \mapsto \nabla_{\mu}^2 \Phi(\mu)(x,y) \in \R^{d \times d}
\end{equation*}
being both continuous. In what follows, we write that $\Phi \in C_b^2(\Pcal_2(\R^d),\R)$ provided that it is twice continuously differentiable with bounded second-order derivatives, and define its \textnormal{Wasserstein Hessian} at any $\mu \in \Pcal_2(\R^d)$ as the bilinear form $\Hess(\Phi)(\mu) : L^2_{\mu}(\R^d,\R^d) \times L^2_{\mu}(\R^d,\R^d) \to \R$ given by 
\begin{equation*}
\Hess \Phi(\mu)(\xi,\zeta) := \INTDom{\Big\langle \D_x \nabla_{\mu} \Phi(\mu)(x) \xi(x) , \zeta(x) \Big\rangle}{\R^d}{\mu(x)} + \INTDom{\INTDom{\Big\langle \nabla_{\mu}^2 \Phi(\mu)(x,y) \xi(y) , \zeta(x) \Big\rangle}{\R^d}{\mu(y)}}{\R^d}{\mu(x)}
\end{equation*}
for all $\xi,\zeta \in L^2_{\mu}(\R^d,\R^d)$. 
\end{Def}

Observe that following the above definition, the Wasserstein Hessian of a function can be equivalently represented by a linear operator $\Hess \Phi(\mu) : L^2_{\mu}(\R^d,\R^d) \to L^2_{\mu}(\R^d,\R^d)$, which is given by 
\begin{equation*}
\Big( \Hess \Phi(\mu) \, \xi \Big)(x) := \D_x \nabla_{\mu} \Phi(\mu)(x) \xi(x) + \INTDom{\nabla_{\mu}^2 \Phi(\mu)(x,y) \xi(y)}{\R^d}{\mu(y)}
\end{equation*}
for each $\xi \in L^2_{\mu}(\R^d,\R^d)$ and $\mu$-almost every $x \in \R^d$. In particular, note that the latter is the sum of a multiplication operator and a kernel operator. As alluded to e.g. in \cite[Section 3.1]{Chassagneux2022}, it can be shown that $\Phi \in C_b^2(\Pcal_2(\R^d),\R)$ if and only if its lift $\widetilde{\Phi} : L^2_{\P}(\Omega,\R^d) \to \R$ is an element of $C^2_b(L^2_{\P}(\Omega,\R^d),\R)$. Again, we provide a statement and a proof of the direct implication of this claim, for the sake of completeness. 

\begin{prop}[Second-order differentiability and Hessian formula]
\label{prop:WassHess}
Let $\Phi \in C_b^2(\Pcal_2(\R^d),\R)$ in the sense of Definition \ref{def:WassHess}. Then, its lift $\widetilde{\Phi} : L^2_{\P}(\Omega,\R^d) \to \R$ is twice continuously Fréchet differentiable with bounded second-order derivatives in the usual sense, with 
\begin{equation}
\label{eq:HessianFormula}
\begin{aligned}
\nabla^2 \widetilde{\Phi}(X)(G,H) & = \INTDom{\Big\langle \D_x \nabla_{\mu}\Phi(X_{\sharp} \P)(X(\omega))  G(\omega) , H(\omega) \Big\rangle}{\Omega}{\P(\omega)} \\
& \hspace{0.45cm} + \INTDom{\INTDom{\Big\langle \nabla^2_{\mu} \Phi(X_{\sharp} \P)(X(\omega),X(\theta)) G(\theta), H(\omega) \Big\rangle}{\Omega}{\P(\theta)}}{\Omega}{\P(\omega)}
\end{aligned}
\end{equation}
for all $X,G,H \in L^2_{\P}(\Omega,\R^d)$. 
\end{prop}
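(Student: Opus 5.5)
The plan is to differentiate the first-order lift formula of Proposition \ref{prop:WassGrad} once more. By that result, $\widetilde{\Phi}$ is Fréchet differentiable everywhere, with gradient
\begin{equation*}
\nabla \widetilde{\Phi}(X) = \nabla_{\mu}\Phi(X_{\sharp}\P) \circ X .
\end{equation*}
It therefore suffices to show that the map $X \mapsto \nabla_\mu \Phi(X_\sharp \P)\circ X$, from $L^2_{\P}(\Omega,\R^d)$ to $L^2_{\P}(\Omega,\R^d)$, is Fréchet differentiable with derivative $\Lpazo_X$ given by
\begin{equation*}
(\Lpazo_X G)(\omega) := \D_x\nabla_\mu\Phi(X_\sharp\P)(X(\omega))G(\omega) + \INTDom{\nabla^2_\mu\Phi(X_\sharp\P)(X(\omega),X(\theta))G(\theta)}{\Omega}{\P(\theta)},
\end{equation*}
and that $X \mapsto \Lpazo_X$ is continuous and bounded in operator norm. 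Pairing $\Lpazo_X G$ with $H$ then gives \eqref{eq:HessianFormula}.

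Fix $X,G$ and write $\mu := X_\sharp\P$ and $\mu_G := (X+G)_\sharp\P$. I will split the increment as
\begin{equation*}
\nabla_\mu\Phi(\mu_G)(X+G) - \nabla_\mu\Phi(\mu)(X) = \Big[\nabla_\mu\Phi(\mu_G)(X+G)-\nabla_\mu\Phi(\mu_G)(X)\Big] + \Big[\nabla_\mu\Phi(\mu_G)(X)-\nabla_\mu\Phi(\mu)(X)\Big].
\end{equation*}

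\textbf{Local part.} Apply the fundamental theorem of calculus in $x$ to the first bracket. It equals $\int_0^1 \D_x\nabla_\mu\Phi(\mu_G)(X+sG)\,G\,\mathrm{d}s$. Subtracting $\D_x\nabla_\mu\Phi(\mu)(X)G$ leaves an error bounded in $L^2_\P$ by
\begin{equation*}
\Big\| \sup_{s\in[0,1]} \big|\D_x\nabla_\mu\Phi(\mu_G)(X+sG) - \D_x\nabla_\mu\Phi(\mu)(X)\big|\,|G| \Big\|_{L^2_\P}.
\end{equation*}
The coefficient is bounded, since $\Phi\in C^2_b$, and I must show this error is $\mathrm{o}(\|G\|)$ uniformly as $\|G\|\to0$.

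\textbf{Nonlocal part.} For fixed $x$, apply the Fréchet differentiability of $\mu\mapsto\nabla_\mu\Phi(\mu)(x)$ from Definition \ref{def:WassHess} along the plan $(X,X+G)_\sharp\P$, exactly as in the proof of Proposition \ref{prop:WassGrad}. This gives, pointwise in $x$,
\begin{equation*}
\nabla_\mu\Phi(\mu_G)(x)-\nabla_\mu\Phi(\mu)(x) = \int_\Omega \nabla^2_\mu\Phi(\mu)(x,X(\theta))G(\theta)\,\mathrm{d}\P(\theta) + \mathrm{o}_x(\|G\|).
\end{equation*}
Substituting $x = X(\omega)$ produces the integral operator term. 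What remains is to show that $\mathrm{o}_{X(\omega)}(\|G\|)$ is $\mathrm{o}(\|G\|)$ in $L^2_\P$. For that I will redo the expansion along the curve $\mu_s := (X+sG)_\sharp\P$. Differentiating in $s$ by Proposition \ref{prop:WassGrad}, applied componentwise to $\Phi_x:=\nabla_\mu\Phi(\cdot)(x)$ with gradient $\nabla^2_\mu\Phi(\cdot)(x,\cdot)$, gives
\begin{equation*}
\nabla_\mu\Phi(\mu_G)(x)-\nabla_\mu\Phi(\mu)(x) = \int_0^1\!\!\int_\Omega \nabla^2_\mu\Phi(\mu_s)(x,X(\theta)+sG(\theta))G(\theta)\,\mathrm{d}\P(\theta)\,\mathrm{d}s .
\end{equation*}
The remainder is then a difference of bounded kernels applied to $G$. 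Its norm is controlled via Cauchy--Schwarz by $\|G\|$ times the $L^2_{\P\otimes\P}$ norm of the kernel difference.

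\textbf{Main obstacle.} The hard step is turning these pointwise continuity facts into uniform $\mathrm{o}(\|G\|)$ estimates, in both parts. Continuity of $\D_x\nabla_\mu\Phi$ and $\nabla^2_\mu\Phi$ alone, without uniform continuity, does not immediately give uniformity in $G$. I will argue by contradiction with sequences: take $G_n$ with $\|G_n\|\to0$ and a ratio error bounded below. Pass to a subsequence where $G_n\to0$ $\P$-a.e., and note $\mu_{G_n}\to\mu$ in $W_2$. The bounded coefficient differences then tend to zero a.e. by continuity, and the issue is to handle their product with $G_n/\|G_n\|$, which is only bounded in $L^2$. For that I will use equi-integrability on sets where $|G_n|$ is large together with Egorov's theorem on the rest. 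I expect this is where some uniform continuity of the second derivatives may be implicitly needed.

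The same dominated-convergence and Cauchy--Schwarz argument then gives continuity of $X\mapsto\Lpazo_X$ in operator norm, and boundedness of $\Lpazo_X$ follows directly from the bounds on the second derivatives.
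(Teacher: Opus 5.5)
The step you flag as the main obstacle cannot be completed, and uniform continuity is not what is missing. The local (multiplication) part of $X\mapsto\nabla_{\mu}\Phi(X_{\sharp}\P)\circ X$ is in general \emph{not} Fr\'echet differentiable from $L^2_{\P}(\Omega,\R^d)$ into itself, so the statement is false as formulated.

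Take $d=1$ and $\Phi(\mu):=\int_{\R}\cos x\,\mathrm{d}\mu(x)$. It belongs to $C^2_b(\Pcal_2(\R),\R)$, with $\nabla_{\mu}\Phi(\mu)(x)=-\sin x$ (the remainder in Definition \ref{def:WassGrad} is at most $\frac12 W_{2,\gamma}^2$), $\D_x\nabla_{\mu}\Phi(\mu)(x)=-\cos x$ and $\nabla^2_{\mu}\Phi\equiv0$. All of these are smooth, bounded and uniformly continuous. Its lift has $\nabla\widetilde{\Phi}(X)=-\sin X$, whose G\^ateaux derivative at $0$ is $G\mapsto-G$, as \eqref{eq:HessianFormula} predicts. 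Since $\P$ is atomless, pick $A_n$ with $\P(A_n)=n^{-4}$ and set $G_n:=n\mathbf{1}_{A_n}$. Then $\|G_n\|_{L^2_{\P}}=n^{-1}\to0$, while
\begin{equation*}
\frac{\|\nabla\widetilde{\Phi}(G_n)-\nabla\widetilde{\Phi}(0)+G_n\|_{L^2_{\P}}}{\|G_n\|_{L^2_{\P}}}=\frac{|n-\sin n|}{n}\longrightarrow1.
\end{equation*}
Even the second-order Taylor expansion of $\widetilde{\Phi}$ fails, since $\E[\cos G_n-1+G_n^2/2]/\|G_n\|_{L^2_{\P}}^2\to\frac12$. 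The operator-norm continuity you assert at the end fails as well: the multiplication part of $\nabla^2\widetilde{\Phi}(G_n)-\nabla^2\widetilde{\Phi}(0)$ has norm $\|1-\cos G_n\|_{L^\infty}=|1-\cos n|$, which does not tend to $0$. This is Krasnosel'skii's classical observation that superposition operators from $L^2$ into itself over an atomless space are Fr\'echet differentiable only when affine.

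In your argument it bites exactly where you invoke equi-integrability. The family $|G_n|^2/\|G_n\|^2_{L^2_{\P}}$ is bounded in $L^1$ but not uniformly integrable. Its mass sits where $|G_n|$ is large, which is precisely where your coefficient $c_G:=\sup_{s\in[0,1]}|\D_x\nabla_{\mu}\Phi(\mu_G)(X+sG)-\D_x\nabla_{\mu}\Phi(\mu)(X)|$ is of order one. Egorov cannot help, because that mass lives on sets of vanishing measure. Your nonlocal part, by contrast, is correct: the Cauchy--Schwarz bound through the $L^2(\P\otimes\P)$-norm of the kernel difference does not depend on how $G$ concentrates.

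What survives is a weak version, and that is also all the paper's own proof delivers. The appendix uses the same splitting as you: Taylor in $x$ for the local term, and expansion along $((X+sG)_{\sharp}\P)_{s\in[0,1]}$ for the nonlocal one. However, it works with the scalar $\langle\nabla\widetilde{\Phi}(X+G),H\rangle$ for a \emph{fixed} $H$, through the plan $(X,G,H)_{\sharp}\P$ and Lemma \ref{lem:Small-o}. For fixed $H$ the local remainder is at most $\|G\|_{L^2_{\P}}\|c_G H\|_{L^2_{\P}}$, with $c_G$ bounded and tending to $0$ in probability, hence it is $\mathrm{o}(\|G\|_{L^2_{\P}})$ by dominated convergence. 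The rate depends on $H$, though, and choosing $H=G_n/\|G_n\|_{L^2_{\P}}$ above shows it is not uniform on the unit ball.

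One thus obtains the following, but not twice continuous Fr\'echet differentiability:
\begin{itemize}
\item Fr\'echet differentiability of each scalar map $X\mapsto\langle\nabla\widetilde{\Phi}(X),H\rangle$, in particular twice G\^ateaux differentiability of $\widetilde{\Phi}$;
\item the bounded Hessian \eqref{eq:HessianFormula};
\item continuity of $X\mapsto\nabla^2\widetilde{\Phi}(X)$ in the strong operator topology.
\end{itemize}
The last step of the paper's proof, which upgrades this to the Fr\'echet statement, is unjustified for the same reason. So instead of looking for a stronger continuity hypothesis, either weaken the conclusion to this form, or add structure that removes the nonlinearity of the local part. For example, if $x\mapsto\D_x\nabla_{\mu}\Phi(\mu)(x)$ does not depend on $x$, as in quadratic models, your argument goes through as written.
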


\begin{proof}
Being a combination of fairly standard but technical arguments, this proof is deferred to Appendix \ref{section:AppendixHess} below.
\end{proof}

\begin{lem}[Composition operators and the representation of lifted Hessian]
\label{lem:LiftedConjugation}
Let $X \in L^2_{\P}(\Omega,\R^d)$ be given, set $\mu := X_{\sharp}\P \in \Pcal_2(\R^d)$, and define the composition operator $\Cpazo_X : L^2_{\mu}(\R^d,\R^d) \to L^2_{\P}(\Omega,\R^d)$ by 
\begin{equation*}
\Cpazo_X \xi := \xi \circ X
\end{equation*}
for each $\xi \in L^2_{\mu}(\R^d,\R^d)$. Then, $\Cpazo_X$ is an isometry onto the closed subspace $L^2_{\P}(\Omega,\R^d;\sigma(X))$, and its adjoint $\Cpazo_X^{\star} : L^2_{\P}(\Omega,\R^d) \to L^2_{\mu}(\R^d,\R^d)$ is characterized by the identity
\begin{equation}
\label{eq:AdjointComposition}
\INTDom{\langle (\Cpazo_X^{\star} H)(x), \xi(x) \rangle}{\R^d}{\mu(x)} = \INTDom{\langle H(\omega), \xi \circ X(\omega) \rangle}{\Omega}{\P(\omega)}
\end{equation}
for every $H \in L^2_{\P}(\Omega,\R^d)$ and $\xi \in L^2_{\mu}(\R^d,\R^d)$. In particular, the operator $\Cpazo_X \Cpazo_X^{\star} \in \Lpazo(L^2_{\P}(\Omega,\R^d))$ coincides with the orthogonal projection onto $L^2_{\P}(\Omega,\R^d;\sigma(X))$, namely
\begin{equation*}
\Cpazo_X \Cpazo^{\star}_X H = \mathbb{E}[H\,\vert \sigma(X)]. 
\end{equation*}
Lastly, if $\Phi \in C_b^2(\Pcal_2(\R^d),\R)$, then the operator representing its Hessian $\nabla^2 \widetilde{\Phi}(X)$ decomposes into
\begin{equation}
\label{eq:LiftedHessianStructure}
\nabla^2 \widetilde{\Phi}(X) = \Mpazo_X + \Cpazo_X \Kpazo_{\mu} \Cpazo_X^{\star},
\end{equation}
where
\begin{equation*}
(\Mpazo_X H)(\omega) = \D_x \nabla_{\mu} \Phi(\mu)(X(\omega)) H(\omega) \qquad \text{and} \qquad
(\Kpazo_{\mu} \, \xi)(x) = \INTDom{\nabla_{\mu}^2 \Phi(\mu)(x,y)\xi(y)}{\R^d}{\mu(y)}.
\end{equation*}
In particular, the Hilbert subspace $L^2_{\P}(\Omega,\R^d;\sigma(X))$ is invariant under $\nabla^2 \widetilde{\Phi}(X)$, and
\begin{equation}
\label{eq:HessianUnitaryConjugation}
\nabla^2 \widetilde{\Phi}(X)_{\mid L^2_{\P}(\Omega,\R^d;\sigma(X))} = \Cpazo_X \, \Hess \Phi(\mu) \, \Cpazo_X^{\star}.
\end{equation}
\end{lem}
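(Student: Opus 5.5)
The argument proceeds in four steps, each resting on the change-of-variables formula $\int_\Omega f(X(\omega))\,\mathrm{d}\P(\omega) = \int_{\R^d} f(x)\,\mathrm{d}\mu(x)$ with $\mu = X_\sharp\P$.

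\textbf{Isometry and range of $\Cpazo_X$.} Applying the change-of-variables formula with $f=|\xi|^2$ gives $\|\xi\circ X\|_{L^2_\P} = \|\xi\|_{L^2_\mu}$ for every Borel representative $\xi$. In particular $\Cpazo_X$ is well defined on equivalence classes, since $\mu$-null modifications of $\xi$ change $\xi\circ X$ only on a $\P$-null set, and it is a linear isometry. Its range is therefore closed. It contains all $\xi\circ X$ with $\xi$ bounded Borel, and it is contained in the $\sigma(X)$-measurable square-integrable variables. By the characterization \eqref{eq:AdaptedRandomVariable}, together with closedness of the range, the range equals $L^2_\P(\Omega,\R^d;\sigma(X))$.

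\textbf{Adjoint and conditional expectation.} The identity \eqref{eq:AdjointComposition} is simply the definition of the Hilbert adjoint, which exists because $\Cpazo_X$ is bounded. For any isometry $\Cpazo$, one has $\Cpazo^\star\Cpazo = \Id$. Hence $\Cpazo\Cpazo^\star$ is a self-adjoint idempotent, that is, an orthogonal projection, and its range equals the range of $\Cpazo$. Combining this with the previous step and the definition of $\E[\cdot|\sigma(X)]$ as the orthogonal projection onto $L^2_\P(\Omega,\R^d;\sigma(X))$ gives $\Cpazo_X\Cpazo_X^\star H = \E[H|\sigma(X)]$.

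\textbf{Decomposition \eqref{eq:LiftedHessianStructure}.} We start from formula \eqref{eq:HessianFormula} of Proposition \ref{prop:WassHess}. The first integral is exactly $\langle \Mpazo_X G, H\rangle$; boundedness of $\D_x\nabla_\mu\Phi$ makes $\Mpazo_X$ a bounded operator. In the second integral, we apply Fubini (justified by boundedness of $\nabla^2_\mu\Phi$ and square integrability) and rewrite the inner integral $\int_\Omega \nabla^2_\mu\Phi(\mu)(x,X(\theta))G(\theta)\,\mathrm{d}\P(\theta)$. This is the main technical point: $G$ is not $\sigma(X)$-measurable, so one cannot change variables directly. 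We fix $x$ and test the inner integral componentwise against the bounded Borel maps $y\mapsto \nabla^2_\mu\Phi(\mu)(x,y)^{\top}e_i$. By \eqref{eq:AdjointComposition}, extended to bounded Borel test functions, the inner integral equals $\int_{\R^d}\nabla^2_\mu\Phi(\mu)(x,y)(\Cpazo_X^\star G)(y)\,\mathrm{d}\mu(y) = (\Kpazo_\mu\Cpazo_X^\star G)(x)$. Evaluating at $x = X(\omega)$ and integrating against $H(\omega)$ yields $\langle \Cpazo_X\Kpazo_\mu\Cpazo_X^\star G, H\rangle$. Since $G$ and $H$ are arbitrary, \eqref{eq:LiftedHessianStructure} follows. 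The only care needed is measurability in $(x,y)$, which is ensured by the continuity assumed in Definition \ref{def:WassHess}.

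\textbf{Invariance and conjugation \eqref{eq:HessianUnitaryConjugation}.} Let $G = \xi\circ X$. Then $\Mpazo_X G = \big(\D_x\nabla_\mu\Phi(\mu)(\cdot)\,\xi(\cdot)\big)\circ X = \Cpazo_X(\cdot)$, which lies in the range of $\Cpazo_X$. The second term $\Cpazo_X\Kpazo_\mu\Cpazo_X^\star G$ lies in that range by construction. Hence $L^2_\P(\Omega,\R^d;\sigma(X))$ is invariant under $\nabla^2\widetilde\Phi(X)$. Moreover, $\Cpazo_X^\star\Cpazo_X = \Id$ gives
\[
\nabla^2\widetilde\Phi(X)\,\Cpazo_X\xi = \Cpazo_X\big(\D_x\nabla_\mu\Phi(\mu)\,\xi + \Kpazo_\mu\xi\big) = \Cpazo_X\,\Hess\Phi(\mu)\,\xi .
\]
Writing an arbitrary element of the subspace as $\Cpazo_X\Cpazo_X^\star G$ then yields \eqref{eq:HessianUnitaryConjugation}. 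In this identity, $\Cpazo_X$ is viewed as a unitary map onto its range.
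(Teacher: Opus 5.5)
Your proposal is correct and follows the paper's proof step by step: the isometry via change of variables, the adjoint by definition, the identification $\Cpazo_X\Cpazo_X^{\star} = \E[\cdot\,|\sigma(X)]$ as the orthogonal projection onto the range, the decomposition as a rewriting of \eqref{eq:HessianFormula} through \eqref{eq:AdjointComposition}, and the conjugation from $\Cpazo_X^{\star}(\xi\circ X)=\xi$. You add details the paper leaves implicit: $\Cpazo_X^{\star}\Cpazo_X=\Id$ makes $\Cpazo_X\Cpazo_X^{\star}$ a self-adjoint idempotent, you apply the adjoint identity pointwise in $x$ to the inner integral, and you treat all $\xi\in L^2_{\mu}$ via surjectivity onto the range rather than bounded Borel $\xi$ followed by a closure argument.
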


\begin{proof}
The isometry property of $\Cpazo_X$ follows directly from the change of variable formula
\begin{equation*}
\INTDom{|(\Cpazo_X \xi)(\omega)|^2}{\Omega}{\P(\omega)} = \INTDom{|\Cpazo_X \xi \circ X(\omega)|^2}{\Omega}{\P(\omega)} = \INTDom{|\xi(x)|^2}{\R^d}{\mu(x)}.
\end{equation*}
The characterization \eqref{eq:AdjointComposition} of $\Cpazo_X^{\star}$ is simply the definition of the adjoint, while the identification $\Cpazo_X \Cpazo_X^{\star} = \E[\cdot|\sigma(X)]$ follows from the uniqueness of orthogonal projections onto closed subspaces. Concerning the decomposition \eqref{eq:LiftedHessianStructure}, is a just a rewriting of \eqref{eq:HessianFormula}. Indeed, the first term therein is precisely 
\begin{equation*}
\INTDom{\Big\langle \D_x \nabla_{\mu}\Phi(X_{\sharp} \P)(X(\omega))  G(\omega) , H(\omega) \Big\rangle}{\Omega}{\P(\omega)} = \big\langle \Mpazo_X G, H \big\rangle_{L^2_{\P}(\Omega,\R^d)},
\end{equation*}
whereas due to \eqref{eq:AdjointComposition}, the second term can be recast as 
\begin{equation*}
\begin{aligned}
& \INTDom{\INTDom{\Big\langle \nabla^2_{\mu} \Phi(X_{\sharp} \P)(X(\omega),X(\theta)) G(\theta), H(\omega) \Big\rangle}{\Omega}{\P(\theta)}}{\Omega}{\P(\omega)} \\
& \hspace{0.45cm} = \INTDom{\INTDom{\Big\langle \nabla_{\mu}^2 \Phi(\mu)(x,y) (\Cpazo_X^{\star} G)(y) , (\Cpazo_X^{\star} H)(x) \Big\rangle}{\R^d}{\mu(y)}}{\R^d}{\mu(x)} \\
& \hspace{0.45cm} = \big\langle \Cpazo_X \Kpazo_{\mu} \Cpazo_X^{\star} G , H\big\rangle_{L^2_{\P}(\Omega,\R^d)}.
\end{aligned}
\end{equation*}
Finally, when $G = \xi \circ X$ for some bounded Borel map $\xi : \R^d \to \R^d$, one clearly has $\Cpazo_X^{\star} G = \xi$, and \eqref{eq:HessianUnitaryConjugation} follows immediately from the definition of $\Hess \Phi(\mu)$ and taking the closure in $L^2_{\P}(\Omega,\R^d)$.
\end{proof}

%%%%%%%%%%%%%%%%%%%%%%%%%%%%%%%%%%%%%%%%%%%%%%%%%%%%%%%%%%%%%%%%%%%%%%%	

\paragraph*{Meanfield optimal control.} We now recall necessary facts about the variational problems considered throughout the paper. Given some $\mu^0 \in \Pcal_2(\R^d)$, we consider in the sequel the following \textit{meanfield optimal control problem} in Eulerian form
\begin{equation}
(\Ppazo_{\Erm}) ~~ \left\{
\begin{aligned}
\min_{(\mu,u_{\Erm})} & \bigg[ \INTSeg{\INTDom{L \Big( \mu(t),x,u_{\Erm}(t,x) \Big)}{\R^d}{\mu(t)(x)}}{t}{0}{T} + \varphi(\mu(T)) \bigg], \\
\textnormal{s.t.}~ & \left\{
\begin{aligned}
& \partial_t \mu(t) + \Div_x \Big( v(\mu(t),\cdot,u_{\Erm}(t,\cdot)) \mu(t) \Big) = 0, \\
& \mu(0) = \mu^0,
\end{aligned}
\right.
\end{aligned}
\right.
\end{equation}
where the dynamics is understood in the sense of distributions 
\begin{equation}
\label{eq:DistribCE}
\INTSeg{\INTDom{\bigg( \partial_t \varphi(t,x) + \Big\langle\nabla_x \varphi(t,x) , v(\mu(t), x,u_{\Erm}(t,x)) \Big\rangle \bigg)}{\R^d}{\mu(t)(x)}}{t}{0}{T} = 0
\end{equation}
for each $\varphi \in C^1_c((0,T) \times \R^d)$, and the controls belongs to the (trajectory dependent) admissible set 
\begin{equation*}
\Ucal_{\Erm}[\mu(\cdot)] := \Bigg\{u_{\Erm} : [0,T] \times \R^d \to U ~\,\textnormal{s.t.}~ \text{$\Lcal^1 \hspace{-0.05cm} \times \hspace{-0.05cm} \Bcal(\R^d)$-measurable\, and}~ \INTSeg{\NormLbis{u_{\Erm}(t)}{\R^d,\R^d}{\mu(t)}^2}{t}{0}{T} < +\infty \Bigg\}.
\end{equation*}
Given now a random variable $X^0 \in L^2_{\P}(\Omega,\R^d)$ satisfying the initial assignment condition $X^0_{\sharp} \P = \mu^0$, the Lagrangian lift of $(\Ppazo_{\Erm})$ is given by 
\begin{equation}
(\Ppazo_{\Lrm}) ~~
\left\{
\begin{aligned}
\min_{(X,u_{\Lrm})} & \bigg[ \INTSeg{\INTDom{L \Big( X(t)_{\sharp}\P, X(t,\omega),u_{\Lrm}(t,\omega) \Big)}{\Omega}{\P(\omega)}}{t}{0}{T} + \varphi(X(T)_{\sharp}\P) \bigg], \\
\textnormal{s.t.}~ & \left\{
\begin{aligned}
& \dot{X}(t,\omega) = v \Big( X(t)_{\sharp}\P, X(t,\omega),u_{\Lrm}(t,\omega) \Big), \\
& X(0) = X^0,
\end{aligned}
\right.
\end{aligned}
\right.
\end{equation}
where the dynamics is understood in the strong sense -- or equivalently $\P$-almost everywhere in $\omega \in \Omega$, see e.g. \cite[Appendix A.1]{Cavagnari2022} -- and the controls belong to the admissible set 
\begin{equation*}
\Ucal_{\Lrm} := \Bigg\{u_{\Lrm} : [0,T] \times \Omega \to U ~\,\textnormal{s.t.}~ \text{$\Lcal^1 \times \P$-measurable\, and}~ \INTSeg{\NormLbis{u_{\Lrm}(t)}{\Omega,\R^d}{\P}^2}{t}{0}{T} < +\infty \Bigg\}.
\end{equation*}
For later use, we introduce the following admissible classes of trajectory-control pairs
\begin{equation*}
\Adm(\Ppazo_{\Lrm}) = \bigg\{ (X(\cdot),u_{\Lrm}(\cdot)) \in \AC^2([0,T],L^2_{\P}(\Omega,\R^d)) \times \Ucal_{\Lrm} ~\; \text{admissible for for $(\Ppazo_{\Lrm})$} \bigg\}
\end{equation*}
and 
\begin{equation*}
\Adm(\Ppazo_{\Erm}) = \bigg\{ (\mu(\cdot),u_{\Erm}(\cdot)) \in \AC^2([0,T],\Pcal_2(\R^d)) \times \Ucal_{\Erm}[\mu(\cdot)] ~\; \text{admissible for $(\Ppazo_{\Erm})$} \bigg\}.
\end{equation*}
Throughout the article, we will work under the following basic assumptions. 

\begin{taggedhyp}{\textnormal{(H)}}
\label{hyp:H}
The data of the control problem satisfy the following assumptions.
\begin{enumerate}
\item[$(i)$] The control set $U \subset \R^m$ is a nonempty subspace.
\item[$(ii)$] The mappings $v : \Pcal_2(\R^d) \times \R^d \times U \to \R^d$ and $L : \Pcal_2(\R^d) \times \R^d \times U \to \R$ are continuous, twice continuously differentiable with respect to $(x,u) \in \R^d \times U$, twice continuously Fréchet differentiable with respect to $\mu \in \Pcal_2(\R^d)$ in the sense of Definition \ref{def:WassHess}, and all first- and second-order derivatives are bounded on bounded subsets of the relevant spaces.
\item[$(iii)$] The dynamics is affine with respect to the control variable, namely $v(\mu,x,(1-\alpha)u + \alpha v) = (1-\alpha)v(\mu,x,u) + \alpha v(\mu,x,v)$ for each $(\mu,x) \in \Pcal_2(\R^d) \times \R^d$, every $u,v \in U$ and all $\alpha \in [0,1]$. 
%$v(\mu,x,u) = v_0(\mu,x) + B(\mu,x)u$.
%
\item[$(iv)$] For every $(\mu,x) \in \Pcal_2(\R^d) \times \R^d$, the map $u \in U \mapsto L(\mu,x,u) \in \R$ is strictly convex %, and strictly convex on neighbourhoods of the stationary controls considered in Section \ref{section:Turnpike}.
\item[$(v)$] The terminal cost $\varphi : \Pcal_2(\R^d) \to \R$ is twice continuously differentiable in the sense of Definition \ref{def:WassHess} with bounded derivatives. % satisfies $\varphi \in \widetilde{\Cpazo}^2_b(\Pcal_2(\R^d),\R)$.
%
%\item[$(vi)$] The state equations in \eqref{eq:LagrangianDynamicProblem} and \eqref{eq:EulerianDynamicProblem} are well posed for admissible controls, and the Pontryagin maximum principles quoted below apply.
\end{enumerate}
\end{taggedhyp}

\begin{rmk}[Concerning Hypotheses \ref{hyp:H}]
Note that Hypotheses \ref{hyp:H}-$(ii)$ and \ref{hyp:H}-$(v)$ are purposely stated in a slightly redundant way in order to cover simultaneously the differential statements of Sections \ref{section:Preliminaries} and \ref{section:LagrangianEulerian}, and the second-order turnpike analysis of Section \ref{section:Turnpike}. In concrete examples, and in particular in linear-quadratic models, these assumptions are easily checked directly on the coefficients.
\end{rmk}

In our subsequent developments, we shall need the following pointwise Hamiltonian 
\begin{equation}
\label{eq:HamiltonianClassical}
H(\mu,x,p,u) = \langle p , v(\mu,x,u) \rangle - L(\mu,x,u)
\end{equation}
for every $(\mu,x,p,u) \in \Pcal_2(\R^d) \times \R^{2d} \times U$, along with its Lagrangian lift defined by 
\begin{equation}
\label{eq:HamiltonianLag}
\widetilde{\Hpazo}(X,\Psi,u_{\Lrm}) = \INTDom{H \Big( X_{\sharp}\P,X(\omega),\Psi(\omega),u_{\Lrm}(\omega) \Big)}{\Omega}{\P(\omega)}
\end{equation}
for each $(X,\Psi,u_{\Lrm}) \in L^2_{\P}(\Omega,\R^{2d} \times U)$. The ensuing turnpike analysis only invokes Pontryagin principles that are already available in the literature, see \cite{Averboukh2025}, respectively in the lifted Hilbert setting and in the Wasserstein space. For reading convenience, we state them in the form needed below.

\begin{thm}[Pontryagin Maximum Principle for $(\Ppazo_{\Lrm})$]
\label{thm:PMPLagrangian}
Let $(X^*(\cdot),u_{\Lrm}^*(\cdot)) \in \Adm(\Ppazo_{\Lrm})$ be a strong local minimizer, namely there exists $\epsilon > 0$ such that every $(X(\cdot),u_{\Lrm}(\cdot)) \in \Adm(\Ppazo_{\Lrm})$ satisfying 
\begin{equation*}
\sup_{t \in [0,T]} \NormLbis{X^*(t) - X(t)}{\Omega,\R^d}{\P} + \INTSeg{\NormLbis{u^*_{\Lrm}(t) - u_{\Lrm}(t)}{\Omega,U}{\P}^2}{t}{0}{T} \leq \epsilon
\end{equation*}
produces a higher cost. Then, there exists $\Psi^*(\cdot) \in \AC^2([0,T],L^2_{\P}(\Omega,\R^d))$ such that the following holds.
\begin{enumerate}
\item[$(i)$] The pair $(X^*(\cdot),\Psi^*(\cdot)) \in \AC^2([0,T],L^2_{\P}(\Omega,\R^{2d}))$ solves the boundary-value problem
\begin{equation}
\label{eq:HamiltonianDynLag}
\left\{
\begin{aligned}
& \dot{X}^*(t) = \nabla_{\Psi} \widetilde{\Hpazo}(X^*(t),\Psi^*(t),u_{\Lrm}^*(t)), \qquad\quad X^*(0) = X^0, \\
& \dot{\Psi}^*(t) = -\nabla_X \widetilde{\Hpazo}(X^*(t),\Psi^*(t),u_{\Lrm}^*(t)), \qquad\, \Psi^*(T) = -\nabla \widetilde{\varphi}(X^*(T)),
\end{aligned}
\right.
\end{equation}
for $\Lcal^1$-almost every $t \in [0,T]$.
\item[$(ii)$] The maximization condition
\begin{equation}
\label{eq:MaximizationLag}
H \Big( X^*(t)_{\sharp}\P, X^*(t,\omega),\Psi^*(t,\omega),u_{\Lrm}^*(t,\omega) \Big) = \max_{u \in U} H \Big( X^*(t)_{\sharp}\P, X^*(t,\omega),\Psi^*(t,\omega),u \Big)
\end{equation}
holds for $\Lcal^1 \times \P$-almost every $(t,\omega) \in [0,T] \times \Omega$.
\end{enumerate}
\end{thm}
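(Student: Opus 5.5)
We treat $(\Ppazo_{\Lrm})$ as a classical optimal control problem in the Hilbert state space $L^2_{\P}(\Omega,\R^d)$, with control space $L^2_{\P}(\Omega,U)$, and argue as in the Hilbertian PMP of \cite{Averboukh2025}. The first step is to write the dynamics as $\dot X = F(X,u_{\Lrm})$ with
\[
F(X,u)(\omega) := v(X_{\sharp}\P,X(\omega),u(\omega)),
\]
and the cost as $J = \int_0^T \mathcal{L}(X(t),u_{\Lrm}(t))\,\mathrm{d}t + \widetilde{\varphi}(X(T))$, where $\mathcal{L}(X,u) := \int_\Omega L(X_{\sharp}\P,X,u)\,\mathrm{d}\P$. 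Under Hypotheses \ref{hyp:H}-$(ii)$, Proposition \ref{prop:WassGrad}, applied componentwise together with the chain rule in $(\mu,x,u)$, shows that $F$ and $\mathcal{L}$ are Fréchet differentiable in $X$. Their differentials take the form
\[
\D_X F(X,u)H = \D_x v(\cdot)H + \E'\Big[\nabla_\mu v(X_\sharp\P,X,u)(X')H'\Big],
\]
where primes denote an independent copy, realized as an integral over $\theta\in\Omega$ as in \eqref{eq:HessianFormula}. These derivatives are bounded on bounded sets, so the state equation is well posed and the linearized equation has a bounded evolution operator. Since $\widetilde{\Hpazo}(X,\Psi,u) = \langle \Psi, F(X,u)\rangle - \mathcal{L}(X,u)$, this bookkeeping identifies $\nabla_X\widetilde{\Hpazo}$ with $\D_XF^\star\Psi - \nabla_X\mathcal{L}$.

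The second step defines $\Psi^*$ as the unique $\AC^2$ solution of the linear backward equation
\[
\dot\Psi = -\nabla_X \widetilde{\Hpazo}(X^*,\Psi,u^*_{\Lrm}), \qquad \Psi(T) = -\nabla\widetilde{\varphi}(X^*(T)).
\]
This is a linear Cauchy problem with a bounded, measurable-in-time operator coefficient and an $L^2$ forcing term, so it is uniquely solvable. Item $(i)$ then holds by construction, since $\nabla_\Psi\widetilde{\Hpazo} = F$.

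The third step proves $(ii)$ by needle variations. Since $\P$ is atomless, we cannot perturb in time only. Instead, we fix a Lebesgue point $\tau$, a set $A\subset\Omega$, and a control value $w\in U$, and we set $u^\epsilon := w$ on $[\tau,\tau+\epsilon]\times A$ and $u^\epsilon := u^*_{\Lrm}$ elsewhere. The $L^2$ distance of $u^\epsilon$ from $u^*_{\Lrm}$ is then of order $\epsilon$, so for small $\epsilon$ the perturbation stays in the admissible neighbourhood required by strong local minimality. The standard expansion of the state, namely $X^\epsilon(t) = X^*(t) + \epsilon Y(t) + o(\epsilon)$ for $t>\tau$, with $Y$ solving the linearized equation and
\[
Y(\tau) = \mathds{1}_A\big(F(X^*(\tau),w) - F(X^*(\tau),u^*(\tau))\big),
\]
combined with the duality identity $\frac{\mathrm{d}}{\mathrm{d}t}\langle\Psi^*,Y\rangle = \dots$, yields
\[
0 \le \frac{J(u^\epsilon)-J(u^*)}{\epsilon} \to -\int_A \Big[H(\cdot,w) - H(\cdot,u^*_{\Lrm}(\tau,\omega))\Big]\,\mathrm{d}\P(\omega).
\]
Since $A$ is arbitrary, we obtain the pointwise inequality $H(\cdots,w)\le H(\cdots,u^*)$ for a.e. $\omega$ and a.e. $\tau$. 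We then take $w$ in a countable dense subset of $U$ and use continuity in $u$ to get \eqref{eq:MaximizationLag}. Because the dynamics are affine and the cost is convex in $u$, one could alternatively use convex combinations $u^* + \alpha(w-u^*)$ on $A$, which stay in the affine space $U$.

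The main obstacle is the rigorous remainder estimate in the third step, namely that $\|X^\epsilon(t) - X^*(t) - \epsilon Y(t)\|_{L^2_{\P}} = o(\epsilon)$ uniformly in $t$. This is delicate because $F$ is only locally Lipschitz, and because the mean-field term couples the perturbation on $A$ to all of $\Omega$ through $X_\sharp\P$. I would handle it with a Grönwall argument on bounded sets, using the local boundedness of $\D_x v$, $\D_u v$ and $\nabla_\mu v$. The Wasserstein dependence is controlled through the bound $W_2(X_\sharp\P,Y_\sharp\P)\le\|X-Y\|_{L^2_{\P}}$, and the continuity of the derivatives is used to make the Taylor remainder $o(\epsilon)$ in $L^2_{\P}$.
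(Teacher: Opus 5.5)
The paper gives no proof of this statement; it quotes it from \cite{Averboukh2025}. Your sketch reconstructs the standard self-contained route, and its architecture is sound: the adjoint is defined as the solution of the linear backward equation, needles are localized on $[\tau,\tau+\epsilon]\times A$, the duality identity $\frac{\mathrm{d}}{\mathrm{d}t}\langle \Psi^*,Y\rangle=\langle\nabla_X\mathcal{L},Y\rangle$ closes the computation, and Bochner--Lebesgue points in $\tau$ are taken for a countable dense set of $w$. Your signs are also right: the limit is indeed $-\int_A[H(\cdot,w)-H(\cdot,u^*_{\Lrm}(\tau,\omega))]\,\mathrm{d}\P$.

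Your alternative remark is in fact the more economical route. By Hypotheses \ref{hyp:H}-$(iii)$--$(iv)$, $u\mapsto H(\mu,x,p,u)$ is concave on the subspace $U$, so \eqref{eq:MaximizationLag} is equivalent to $\nabla_u H=0$. Weak variations $u^*_{\Lrm}+\alpha h$ deliver this without any needle or Lebesgue-point bookkeeping.

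There is, however, a false claim in your first step, and it is exactly the one you would lean on for your ``main obstacle''. The map $F(X,u)(\omega)=v(X_\sharp\P,X(\omega),u(\omega))$ is \emph{not} Fr\'echet differentiable from $L^2_{\P}(\Omega,\R^d)$ to itself.
\begin{itemize}
\item Proposition \ref{prop:WassGrad} only concerns scalar law-invariant functionals. It covers $\mathcal{L}$ and $\widetilde{\varphi}$, but not $F$.
\item On an atomless space, the superposition part $X\mapsto v(\mu,X(\cdot),u(\cdot))$ is Fr\'echet differentiable $L^2\to L^2$ only if it is affine in $x$. For example, take $v=\sin x$, $X=0$ and $H_n=\mathds{1}_{B_n}$ with $\P(B_n)\to0$. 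The remainder then has norm $|\sin 1-1|\,\|H_n\|_{L^2_{\P}}$, which is not $o(\|H_n\|)$.
\end{itemize}
Hence the expansion $X^\epsilon=X^*+\epsilon Y+o(\epsilon)$ cannot come from an abstract $C^1$ ODE theory, nor from a Gr\"onwall bound of the form $\|R\|\le o(1)\|\delta\|$.

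Relatedly, ``bounded on bounded sets'' is the wrong kind of bound. An $L^2_{\P}$-bounded set does not confine $X(\omega),u(\omega)$ to a bounded subset of $\R^d\times U$. Local bounds on $\D_xv,\D_uv,\D_\mu v$ therefore give neither Lipschitz estimates for $F$ nor a bounded coefficient in the adjoint equation. Note that with $v$ affine in $u$, $\D_xv$ contains a term linear in $u^*_{\Lrm}(t,\omega)$, which need not be essentially bounded.

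The repair has two parts.
\begin{itemize}
\item Assume these derivatives are bounded uniformly along the range of $(X^*,u^*_{\Lrm})$. Then $F$ is globally Lipschitz and, with $\delta^\epsilon:=X^\epsilon-X^*$, one gets $\|\delta^\epsilon(t)\|_{L^2_{\P}}=O(\epsilon)$.
\item Exploit the specific needle family rather than differentiability of $F$. The local Taylor remainder is bounded pointwise by $\eta^\epsilon|\delta^\epsilon|$, with $\eta^\epsilon$ uniformly bounded and tending to $0$ in measure. The nonlocal term is handled the same way, using Definition \ref{def:WassGrad} at fixed $(x,u)$ and dominated convergence in $\omega$.
\end{itemize}
Gr\"onwall applied to $r^\epsilon:=\delta^\epsilon-\epsilon Y$ then gives $\sup_{t\in[\tau+\epsilon,T]}\|r^\epsilon(t)\|\lesssim\epsilon\int_0^T\|\eta^\epsilon Y\|_{L^2_{\P}}\,\mathrm{d}t+o(\epsilon)$, and this is $o(\epsilon)$ by dominated convergence. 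This uses only directional information along $\delta^\epsilon$, never Fr\'echet differentiability of $F$. The same uniform bounds are what make the adjoint equation well posed in $\AC^2([0,T],L^2_{\P}(\Omega,\R^d))$.
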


\begin{thm}[Pontryagin Maximum Principle for $(\Ppazo_{\Erm})$]
\label{thm:PMPEulerian}
Let $(\mu^*(\cdot),u_{\Erm}^*(\cdot)) \in \Adm(\Ppazo_{\Erm})$ be a local $W_2$-minimizer for $(\Ppazo_{\Erm})$, namely there exists $\epsilon > 0$ such that every $(\mu(\cdot),u_{\Erm}(\cdot)) \in \Adm(\Ppazo_{\Erm})$ satisfying
\begin{equation*}
\sup_{t \in [0,T]} W_2(\mu(t),\mu^*(t)) \leq \epsilon
\end{equation*}
produces a higher cost. Then, there exists $\nu^*(\cdot) \in \AC^2([0,T],\Pcal_2(\R^{2d}))$ such that the following holds.
\begin{enumerate}
\item[$(i)$] The curve $\nu^*(\cdot) \in \AC^2([0,T],\Pcal_2(\R^{2d}))$ solves the boundary-value problem
\begin{equation}
\label{eq:HamiltonianDynEul}
\left\{
\begin{aligned}
& \partial_t \nu^*(t) + \Div_{(x,p)} \big( \Vcal^*(t)\nu^*(t) \big) = 0, \\
& \pi^1_{\sharp} \nu^*(t) = \mu^*(t) \quad \textnormal{for all $t \in [0,T]$}, \\
& \nu^*(T) = \Big( \Id,-\nabla_{\mu}\varphi(\mu^*(T)) \Big)_{\raisebox{4pt}{\scriptsize$\sharp$}} \, \mu^*(T),
\end{aligned}
\right.
\end{equation}
where
\begin{equation*}%\label{eq:PMPVectorFieldEulerian}
\Vcal^*(t,x,p) = \begin{pmatrix}
v \Big( \mu^*(t),x,u_{\Erm}^*(t,x) \Big) \\
- \D_x v \Big( \mu^*(t),x,u_{\Erm}^*(t,x) \Big)^{\raisebox{-4pt}{\scriptsize $\top$}} p - \INTDom{\D_{\mu} v \Big( \mu^*(t),y,u_{\Erm}^*(t,y) \Big)(x)^{\top} q}{\R^{2d}}{\nu^*(t)(y,q)}
\end{pmatrix}
\end{equation*}
for  $\Lcal^1$-almost every $t \in [0,T]$ and $\nu^*(t)$-almost every $(x,p) \in \R^{2d}$.
\item[$(ii)$] The maximization condition
\begin{equation}
\label{eq:MaximizationEul}
H \Big( \mu^*(t),x,p,u_{\Erm}^*(t,x) \Big) = \max_{u \in U} H \big( \mu^*(t),x,p,u \big)
\end{equation}
holds for $\Lcal^1$-almost every $t \in [0,T]$ and $\nu^*(t)$-almost every $(x,p) \in \R^{2d}$.
\end{enumerate}
\end{thm}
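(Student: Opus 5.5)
The plan is to deduce the Eulerian PMP from the Lagrangian one (Theorem \ref{thm:PMPLagrangian}) through the lift, and then push the resulting Lagrangian costate forward to a state-costate measure. Fix a random variable $X^0$ with $X^0_{\sharp}\P = \mu^0$, and build a Lagrangian candidate from the Eulerian minimizer by setting $u_{\Lrm}^*(t,\omega) := u_{\Erm}^*(t,X^*(t,\omega))$. Here $X^*(\cdot)$ is obtained by solving the lifted ODE with the closed-loop field $x \mapsto v(\mu^*(t),x,u^*_{\Erm}(t,x))$; in the nonsmooth case we instead use the superposition principle to produce a random trajectory with $X^*(t)_{\sharp}\P = \mu^*(t)$, as in \cite{Cavagnari2022}. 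Since both running costs and terminal cost depend only on laws, the two problems assign the same cost to this pair.

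The first key step is to show that $(X^*,u_{\Lrm}^*)$ is a strong local minimizer for $(\Ppazo_{\Lrm})$. Take any admissible lifted pair $(X,u_{\Lrm})$ with $\sup_t\|X(t)-X^*(t)\|_{L^2_{\P}} \leq \epsilon$. Then $W_2(X(t)_{\sharp}\P,\mu^*(t)) \leq \epsilon$ for all $t$. The transfer result of \cite{Cavagnari2022,Averboukh2025} yields an Eulerian pair with the same curve of laws, namely $\mu(t) = X(t)_{\sharp}\P$ and $u_{\Erm}(t,x) := \E[u_{\Lrm}(t)\,|\,X(t)=x]$. Because the dynamics is affine in the control (Hypothesis \ref{hyp:H}-$(iii)$), the conditional expectation of the velocity equals the velocity of the averaged control, so $\mu(\cdot)$ indeed solves the continuity equation. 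By Jensen's inequality and the convexity of $L$ (Hypothesis \ref{hyp:H}-$(iv)$), this Eulerian pair has cost no larger than the Lagrangian one. The $W_2$-local minimality of $(\mu^*,u^*_{\Erm})$ therefore gives local minimality of the lift. This use of the affine/convex structure is exactly why only $W_2$-proximity is needed.

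Theorem \ref{thm:PMPLagrangian} then provides a costate $\Psi^*(\cdot)$. Set $\nu^*(t) := (X^*(t),\Psi^*(t))_{\sharp}\P$. The first marginal of $\nu^*(t)$ is $\mu^*(t)$, and $\nu^*(\cdot) \in \AC^2$ because $(X^*,\Psi^*)$ is $\AC^2$ in $L^2_{\P}$. For the terminal condition, Proposition \ref{prop:WassGrad} gives $\Psi^*(T) = -\nabla_{\mu}\varphi(\mu^*(T))\circ X^*(T)$, and pushing forward yields $\nu^*(T) = (\Id,-\nabla_{\mu}\varphi(\mu^*(T)))_{\sharp}\mu^*(T)$. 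Next we compute $\nabla_X\widetilde{\Hpazo}$ using Proposition \ref{prop:WassGrad} applied to the nonlocal dependence on $X_{\sharp}\P$:
\[
-\nabla_X \widetilde{\Hpazo}(\omega) = -\D_x v^{\top}\Psi^*(\omega) + \nabla_x L - \int_{\Omega} \D_{\mu} v\big(\mu^*,X^*(\theta),u^*(\theta)\big)(X^*(\omega))^{\top}\Psi^*(\theta)\,\mathrm{d}\P(\theta) + \nabla_\mu\text{-terms of } L.
\]
Rewriting the $\theta$-integral as an integral against $\nu^*(t)$ shows that this expression equals the second component of $\Vcal^*$ evaluated at $(X^*(\omega),\Psi^*(\omega))$. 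Note that the displayed $\Vcal^*$ as printed appears to omit the $L$ terms, so these should be carried along consistently. Testing the ODE against functions $\phi(x,p)$ and differentiating $\E[\phi(X^*,\Psi^*)]$ in time then gives the continuity equation for $\nu^*$.

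The maximization condition \eqref{eq:MaximizationLag} holds for $\P$-almost every $\omega$ at the point $(X^*(t,\omega),\Psi^*(t,\omega))$. By construction $u_{\Lrm}^*(t,\omega)=u_{\Erm}^*(t,X^*(t,\omega))$, so pushing forward gives \eqref{eq:MaximizationEul} for $\nu^*(t)$-almost every $(x,p)$.

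I expect the main obstacle to be the superposition and measurability step. The issue is ensuring that the lifted control is exactly $u^*_{\Erm}\circ X^*$, so that the Eulerian maximization condition holds with the original Eulerian control and not merely with some conditional expectation of it. If the Cauchy problem for the closed-loop field is not well posed, I would first try the superposition principle on a standard atomless space. Strict convexity of $L$ in $u$ then forces uniqueness of the maximizer, which makes the control a function of $(x,p)$ and closes the argument.
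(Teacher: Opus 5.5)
Your argument is correct and follows the route the paper relies on. The paper gives no proof of its own: it imports the result from \cite{Averboukh2025}, whose Lagrangian-to-Eulerian transfer is summarized in Proposition \ref{prop:EulerianRealization} and mirrored statically in Lemma \ref{lem:LagrangianEulerian}. That route realizes the Eulerian minimizer, uses the affinity of $v$ and the convexity of $L$ to make the lift a strong local minimizer, applies Theorem \ref{thm:PMPLagrangian}, and pushes forward. When the trajectory comes from superposition, you should take $X^0 := X^*(0)$ as the Lagrangian initial datum.

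You are also right that the printed field $\Vcal^*$ omits the cost terms $\nabla_x L(\mu^*(t),x,u^*_{\Erm}(t,x)) + \int_{\R^d} \nabla_{\mu} L(\mu^*(t),y,u^*_{\Erm}(t,y))(x)\,\mathrm{d}\mu^*(t)(y)$ in its second component. What you prove is therefore the correctly stated version.
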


We close this preliminary section by reformulating the main results of \cite[Section 8]{Averboukh2025}, relating Lagrangian and Eulerian Pontryagin triples and the underlying PMPs.  

\begin{prop}[Realization of Eulerian optimal pairs]
\label{prop:EulerianRealization}
Under Hypotheses \ref{hyp:H}, every Eulerian Pontryagin triple $(\nu(\cdot),u_{\Erm}(\cdot))$ satisfying \eqref{eq:HamiltonianDynEul}-\eqref{eq:MaximizationEul} is realized by a Lagrangian Pontryagin triple $(X(\cdot),\Psi(\cdot),u_{\Lrm}(\cdot))$ satisfying \eqref{eq:HamiltonianDynLag}-\eqref{eq:MaximizationLag}, in the sense that
\begin{equation*}
(X(t),\Psi(t))_{\sharp}\P = \nu(t)
\qquad \textnormal{and} \qquad
u_{\Lrm}(t,\omega) = u_{\Erm}(t,X(t,\omega))
\end{equation*}
for $\Lcal^1 \times \P$-almost every $(t,\omega) \in [0,T] \times \Omega$. Moreover, if $(\mu(\cdot),u_{\Erm}(\cdot))$ is a local $W_2$-minimizer for $(\Ppazo_{\Erm})$, then $(X(\cdot),u_{\Lrm}(\cdot))$ may be chosen as a strong local minimizer for $(\Ppazo_{\Lrm})$.
\end{prop}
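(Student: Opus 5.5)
The plan is to build the Lagrangian triple by a superposition/realization argument applied directly to the state-costate curve $\nu(\cdot)$. By Hypotheses \ref{hyp:H}, the vector field $\Vcal(t,\cdot,\cdot)$ from Theorem \ref{thm:PMPEulerian} is affine in $p$, locally Lipschitz in $x$, and has linear growth (the nonlocal term is a fixed vector depending only on $x$ once $t$ is frozen). The first step is to realize the terminal datum by a random variable: pick $X^T\in L^2_{\P}(\Omega,\R^d)$ with $X^T_\sharp\P=\mu(T)$, using the Skorokhod-type result of \cite{Berti2007}, and set $\Psi^T:=-\nabla_\mu\varphi(\mu(T))\circ X^T$. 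I would rather not solve the characteristics backwards directly, since $u_{\Erm}$ is only Borel in $x$ and the flow of $\Vcal$ need not be unique. Instead I will apply the superposition principle \cite[Theorem 8.2.1]{AGS} to $\nu(\cdot)$. This gives a probability measure $\eta$ on $C([0,T],\R^{2d})$ concentrated on absolutely continuous integral curves of $\Vcal$, with $(e_t)_\sharp\eta=\nu(t)$; the required integrability $\int_0^T\!\int|\Vcal|^2\,\mathrm{d}\nu(t)\,\mathrm{d}t<\infty$ follows from $u_{\Erm}\in\Ucal_{\Erm}[\mu(\cdot)]$ and the growth bounds. Since $(\Omega,\Apazo,\P)$ is standard and atomless, there is a measurable $\Gamma:\Omega\to C([0,T],\R^{2d})$ with $\Gamma_\sharp\P=\eta$. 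I then define $(X(t,\omega),\Psi(t,\omega)):=\Gamma(\omega)(t)$ and $u_{\Lrm}(t,\omega):=u_{\Erm}(t,X(t,\omega))$.

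The second step is to check \eqref{eq:HamiltonianDynLag}. Pointwise in $\omega$, the curve $\Gamma(\omega)$ solves $\dot X=v(\mu(t),X,u_{\Lrm})$ and $\dot\Psi=-\D_xv^\top\Psi-\int \D_\mu v(\mu(t),y,u_{\Erm}(t,y))(X)^\top q\,\mathrm{d}\nu(t)(y,q)$. Because $(X(t),\Psi(t))_\sharp\P=\nu(t)$, the nonlocal integral becomes $\E_\theta[\D_\mu v(X(t)_\sharp\P,X(t,\theta),u_{\Lrm}(t,\theta))(X(t,\omega))^\top\Psi(t,\theta)]$. The Lagrangian gradient $\nabla_X\widetilde\Hpazo$ is obtained by combining Proposition \ref{prop:WassGrad} with the chain rule. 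Comparing the two expressions, the Eulerian costate field has to be read with the running-cost contribution $-\nabla_xL$ and its measure analogue included, which I take to be implicit in $\Vcal$ through $H$. With that reading, the two expressions coincide $\P$-a.e., and the strong $\AC^2$ regularity in $L^2_{\P}$ follows by integrating the pointwise bounds. The boundary conditions also need care. At $t=T$ we have $\nu(T)=(\Id,-\nabla_\mu\varphi)_\sharp\mu(T)$, so $\Psi(T)=-\nabla_\mu\varphi(\mu(T))\circ X(T)=-\nabla\widetilde\varphi(X(T))$ a.s. by Proposition \ref{prop:WassGrad}. The initial condition $X(0)=X^0$ holds only in law. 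To fix it, I would use the freedom in choosing $\Gamma$: disintegrate $\eta$ with respect to $e_0^1$ and set $\Gamma(\omega)$ as a measurable selection from $\eta_{X^0(\omega)}$, using an independent uniform coordinate from the atomless structure of $\Omega$. If instead $X^0$ is allowed to be any realization of $\mu^0$, this step is unnecessary.

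The third step is the maximization condition \eqref{eq:MaximizationLag}. It holds because \eqref{eq:MaximizationEul} holds $\nu(t)$-a.e. and $(X(t),\Psi(t))_\sharp\P=\nu(t)$, so the null exceptional sets pull back to $\Lcal^1\times\P$-null sets by Fubini.

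The main obstacle is the final claim about strong local minimality. I would argue by contradiction. Suppose $(X',u'_{\Lrm})$ is admissible and close to $(X,u_{\Lrm})$ in the sense of Theorem \ref{thm:PMPLagrangian} but has strictly lower cost. Take $\mu'(t):=X'(t)_\sharp\P$. Then $W_2(\mu'(t),\mu(t))\le\|X'(t)-X(t)\|\le\epsilon$. Define $u'_{\Erm}(t,\cdot):=\E[u'_{\Lrm}(t)\,|\,X'(t)=\cdot\,]$. By Hypothesis \ref{hyp:H}-$(iii)$, the affinity in $u$ makes $(\mu',u'_{\Erm})$ solve the continuity equation, and by the strict convexity in \ref{hyp:H}-$(iv)$ and Jensen it does not increase the cost (this is \cite[Section 8]{Averboukh2025}). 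This contradicts the $W_2$-local minimality of $(\mu,u_{\Erm})$. Hence the realized pair is a strong local minimizer, with the same $\epsilon$ after adjusting constants.
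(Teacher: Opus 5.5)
The paper gives no proof of this statement; it restates \cite[Section 8]{Averboukh2025}. Your core construction is the right mechanism and works:
\begin{itemize}
\item realize the state--costate curve via the superposition principle and a parametrization $\Gamma$ of $\eta$ on $\Omega$;
\item pull back $u_{\Erm}$ by composition with $X(t)$;
\item transfer \eqref{eq:MaximizationEul} through $(X(t),\Psi(t))_\sharp\P=\nu(t)$;
\item obtain strong local minimality by projecting competitors with a conditional expectation and using affinity plus Jensen, exactly as in Lemma \ref{lem:LagrangianEulerian}.
\end{itemize}
You are also right that $\Vcal^*$ as printed omits the $\nabla_x L$ and nonlocal $\nabla_\mu L$ terms, without which the two costate equations do not match. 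The step that fails is your adjustment to a \emph{prescribed} initial datum $X^0$.

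Atomlessness of $\P$ does not give you a uniform variable independent of $X^0$. Take $\Omega=[0,1]$ with Lebesgue measure, $d=1$, and $X^0(\omega)=\omega$, so that $\mu^0$ is uniform on $[0,1]$. Then $\sigma(X^0)$ is the whole $\sigma$-algebra up to null sets. By the Doob--Dynkin lemma, every random variable on $\Omega$ is $\P$-a.s.\ a Borel function of $X^0$. A realization with $X(0)=X^0$ would therefore force $\Gamma=g\circ X^0$. That makes every conditional law $\eta_{x_0}$ a Dirac mass and $\nu(0)$ concentrated on the graph of a map. Neither need hold when $u_{\Erm}$ is merely Borel and the integral curves of $\Vcal$ split.

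So in general the proposition can only be guaranteed with the initial random variable produced by the construction, $X^0:=X(0)$, which is your fallback reading. This is also the only reading the paper actually uses. In Section \ref{subsection:LagrangianTurnpike}, $(\Ppazo_{\Lrm})$ is re-posed with $X(0)_\sharp\P=\mu^0$, and the proof of Theorem \ref{thm:TurnpikeEulerian} takes $X^0$ to be the initial value of the realized triple. Your minimality argument is unaffected, since competitors share the initial datum $X(0)$, whose law is $\mu^0$. Either drop the ``independent uniform coordinate'' step, or assume explicitly that $\Omega$ carries randomness independent of $X^0$ (e.g.\ $\Omega=\Omega'\times[0,1]$ with $X^0$ depending only on the first factor).

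Two minor corrections:
\begin{itemize}
\item $\Vcal(t,\cdot,\cdot)$ is not locally Lipschitz in $x$, precisely because $u_{\Erm}(t,\cdot)$ is only Borel.
\item The square integrability of $\Vcal$ along $\nu$ required by \cite[Theorem 8.2.1]{AGS}, and by $\Psi\in\AC^2([0,T],L^2_{\P}(\Omega,\R^d))$, does not follow from Hypotheses \ref{hyp:H}. Those only bound derivatives on bounded sets, so this integrability must be assumed, e.g.\ through global bounds on $\D_x v$ and $\D_\mu v$ and linear growth of $v$.
\end{itemize}
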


%\begin{proof}
%This follows from the Eulerian-Lagrangian correspondence developed in \cite{Averboukh2025,Cavagnari2022}: one first realizes the initial law and the state-costate flow on the reference probability space, then pulls back the Eulerian control by composition with the realized state variable. The optimality statement follows from the equivalence of the costs under the graph lifting $(x,u) \mapsto (\Id,u)_{\sharp}\mu$.
%\end{proof}

\section{First-order optimality conditions for the static problems}
\label{section:LagrangianEulerian}

\setcounter{equation}{0} \renewcommand{\theequation}{\thesection.\arabic{equation}}

In this section, we derive first-order KKT optimality conditions for the static Lagrangian and Eulerian problems, that will both be needed to establish the turnpike theorems of Section \ref{section:Turnpike}. While the Lagrangian KKT conditions of Section \ref{subsection:LagrangianKKT} are well-known, they do play a pivotal role in proving the Eulerian ones from Section \ref{subsection:EulerianKKT}, and are thus tailored to this purpose. The latter are incidentally quite novel, and improve upon those e.g. of \cite{Lanzetti2025} in that they allow to treat infinite-dimensional constraints whose target space depends on the measure. 

%%%%%%%%%%%%%%%%%%%%%%%%%%%%%%%%%%%%%%%%%%%%%%%%%%%%%%%%%%%%%%%%%%%%%%%

\subsection{KKT conditions for the static Lagrangian problem}
\label{subsection:LagrangianKKT}

We start by studying the static Lagrangian problem, which we recall writes as
\begin{equation*}%\label{eq:LagrangianStatic}
\tag{$\Ppazo_{\mathrm{L}}^s$}
\left\{ 
\begin{aligned}
\min_{(X,u_{\Lrm})} & \INTDom{L \Big(X_{\sharp} \P,X(\omega),u_{\Lrm}(\omega) \Big)}{\Omega}{\P(\omega)}, \\
\text{s.t.}~\, &
v \big( X_{\sharp} \P,X,u_{\Lrm} \big) = 0 ~~ \text{in $L^2_{\P}(\Omega,\R^d)$}.
\end{aligned}
\right.
\end{equation*}
The latter can be equivalently recast as the following standard equality-constrained optimization problem 
\begin{equation*}
\left\{
\begin{aligned}
\min_{(X,u_{\Lrm})} & \, \widetilde{\Jpazo}(X,u_{\Lrm}), \\
\text{s.t.} ~\, & \widetilde{\Vpazo}(X,u_{\Lrm}) = 0, 
\end{aligned}
\right.
\end{equation*}
where we introduced the lifted cost function $\widetilde{\Jpazo}: L^2_{\P}(\Omega,\R^d \times U) \to \R$ and vector field $\widetilde{\Vpazo}: L^2_{\P}(\Omega,\R^d \times U) \to L^2_{\P}(\Omega,\R^d)$, given by
\begin{equation*}
\widetilde{\Jpazo}(X,u_{\Lrm}) := \INTDom{L\Big( X_{\sharp} \P,X(\omega),u_{\Lrm}(\omega) \Big)}{\Omega}{\P(\omega)} \qquad \text{and} \qquad \widetilde{\Vpazo}(X,u_{\Lrm}) := v \big( X_{\sharp} \P,X,u_{\Lrm} \big) ,
\end{equation*}
for every $(X,u_{\Lrm}) \in L^2_{\P}(\Omega,\R^d \times U)$. In what follows, we let 
\begin{equation*}
\Adm(\Ppazo^s_{\Lrm}) := \Big\{ (X,u_{\Lrm}) \in L^2_{\P}(\Omega,\R^d \times U) ~\,\textnormal{s.t.}~ \widetilde{\Vpazo}(X,u_{\Lrm}) = 0 \Big\}
\end{equation*}
stand for the collection of admissible pairs of $(\Ppazo_{\Lrm}^s)$.

\begin{thm}[KKT conditions for the static Lagrangian problem]
\label{thm:KKTLagrangian}
Suppose that Hypotheses \ref{hyp:H} hold and let $(\bar{X}^s,\bar{u}_{\Lrm}^s) \in \Adm(\Ppazo_{\Lrm}^s)$ be a strong local minimizer at which
\begin{equation*}
\D \widetilde{\Vpazo}(\bar{X}^s,\bar{u}_{\Lrm}^s) : L^2_{\P}(\Omega,\R^d \times U) \to L^2_{\P}(\Omega,\R^d)
\end{equation*}
is surjective. Then, there exists a multiplier $\bar{\Psi}^s \in L^2_{\P}(\Omega,\R^d)$ such that
\begin{equation*}
\nabla \widetilde{\Jpazo}(\bar{X}^s,\bar{u}_{\Lrm}^s) - \D \widetilde{\Vpazo}(\bar{X}^s,\bar{u}_{\Lrm}^s)^{\star}\bar{\Psi}^s = 0.
\end{equation*}
Equivalently, the stationary Pontryagin triple $(\bar{X}^s,\bar{\Psi}^s,\bar{u}_{\Lrm}^s) \in L^2_{\P}(\Omega,\R^{2d} \times U)$ satisfies
\begin{equation}
\label{cor:LagrangianKKTHamil}
\nabla \widetilde{\Hpazo}(\bar{X}^s,\bar{\Psi}^s,\bar{u}_{\Lrm}^s) = 0
\end{equation}
in $L^2_{\P}(\Omega,\R^{2d} \times U)$.
\end{thm}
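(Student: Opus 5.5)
This is the classical Lagrange multiplier rule in Hilbert spaces, so the plan is to verify its hypotheses for $\widetilde{\Jpazo}$ and $\widetilde{\Vpazo}$, apply it, and then translate the resulting identity into Hamiltonian form.

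\textbf{Step 1: regularity.} First I would check that $\widetilde{\Jpazo}$ and $\widetilde{\Vpazo}$ are continuously Fréchet differentiable near $(\bar{X}^s,\bar{u}^s_{\Lrm})$ on $L^2_{\P}(\Omega,\R^d\times U)$, which is a Hilbert space since $U$ is a subspace. Each map splits into two parts.
\begin{enumerate}
\item[$\diamond$] The Nemytskii part $(x,u)\mapsto L(\mu,x,u)$, resp.\ $v(\mu,x,u)$, is handled by Hypotheses \ref{hyp:H}-$(ii)$ and dominated convergence.
\item[$\diamond$] The measure part is handled by Proposition \ref{prop:WassGrad}, applied to the law-invariant dependence on $X_{\sharp}\P$.
\end{enumerate}
This gives the explicit formula
\[
\D\widetilde{\Vpazo}(X,u)(H,w)(\omega)=\D_x v(\cdot)H(\omega)+\D_u v(\cdot)w(\omega)+\E_{\theta}\big[\nabla_\mu v(X_\sharp\P,X(\omega),u(\omega))(X(\theta))H(\theta)\big],
\]
and an analogous formula for $\nabla\widetilde{\Jpazo}$.

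\emph{Main obstacle.} The delicate point is that the growth bounds hold only on bounded sets, while Nemytskii operators on $L^2$ are generally not $C^1$ without growth control. I would work locally and, if needed, restrict to the regularity actually required by the Lagrange multiplier theorem, namely strict differentiability at the minimizer. This can be obtained from uniform continuity of the derivatives on bounded sets together with uniform integrability of the perturbations.

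\textbf{Step 2: Lyusternik and Lagrange multipliers.} Since $\D\widetilde{\Vpazo}(\bar{X}^s,\bar{u}^s_{\Lrm})$ is surjective, the Lyusternik--Graves theorem shows that the tangent cone to $\Adm(\Ppazo^s_{\Lrm})$ at the minimizer is $\ker \D\widetilde{\Vpazo}(\bar{X}^s,\bar{u}^s_{\Lrm})$. Local minimality then gives $\langle\nabla\widetilde{\Jpazo},k\rangle=0$ for every $k$ in this kernel, so
\[
\nabla\widetilde{\Jpazo}\in(\ker \D\widetilde{\Vpazo})^{\perp}=\overline{\mathrm{Ran}\,\D\widetilde{\Vpazo}^{\star}}.
\]
By the closed range theorem, surjectivity of $\D\widetilde{\Vpazo}$ makes $\mathrm{Ran}\,\D\widetilde{\Vpazo}^{\star}$ closed. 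Hence there is $\bar\Psi^s\in L^2_{\P}(\Omega,\R^d)$ with $\nabla\widetilde{\Jpazo}=\D\widetilde{\Vpazo}^{\star}\bar\Psi^s$.

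\textbf{Step 3: Hamiltonian form.} By definition \eqref{eq:HamiltonianLag}, the lifted Hamiltonian is
\[
\widetilde{\Hpazo}(X,\Psi,u)=\langle\Psi,\widetilde{\Vpazo}(X,u)\rangle_{L^2_\P}-\widetilde{\Jpazo}(X,u).
\]
Its partial gradients are therefore:
\begin{enumerate}
\item[$\diamond$] $\nabla_{(X,u)}\widetilde{\Hpazo}=\D\widetilde{\Vpazo}^{\star}\Psi-\nabla\widetilde{\Jpazo}$, which vanishes at $(\bar{X}^s,\bar\Psi^s,\bar{u}^s_{\Lrm})$ by Step 2;
\item[$\diamond$] $\nabla_\Psi\widetilde{\Hpazo}=\widetilde{\Vpazo}(\bar{X}^s,\bar{u}^s_{\Lrm})$, which vanishes by admissibility.
\end{enumerate}
Together these give \eqref{cor:LagrangianKKTHamil}. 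Optionally, I would write $\D\widetilde{\Vpazo}^{\star}$ explicitly using Fubini on the nonlocal term, so that the identity can be compared with the costate equation \eqref{eq:HamiltonianDynLag}.
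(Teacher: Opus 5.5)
Your route is the same as the paper's. The paper's proof simply invokes the Banach-space Lagrange multiplier rule for $\widetilde{\Jpazo}$ and $\widetilde{\Vpazo}$, using surjectivity of $\D\widetilde{\Vpazo}(\bar X^s,\bar u^s_{\Lrm})$, and then reads the identity as $\nabla\widetilde{\Hpazo}=0$. Your Steps 2 and 3 are a correct unpacking of that citation: Lyusternik, then $(\ker \D\widetilde{\Vpazo})^{\perp}=\overline{\mathrm{Ran}\,\D\widetilde{\Vpazo}^{\star}}$ together with the closed range theorem, then the remark that the $\Psi$-component of $\nabla\widetilde{\Hpazo}$ is $\widetilde{\Vpazo}(\bar X^s,\bar u^s_{\Lrm})=0$ by admissibility.

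The step that fails is the remedy you propose in Step 1. Strict, or even Fr\'echet, differentiability of $\widetilde{\Vpazo}$ as a map $L^2_{\P}(\Omega,\R^d\times U)\to L^2_{\P}(\Omega,\R^d)$ does not follow from continuity of $\D_{(x,u)}v$ on bounded sets plus uniform integrability.
\begin{enumerate}
\item[$\diamond$] A ball of $L^2_{\P}$ is not uniformly square-integrable and gives no pointwise bounds, so ``working locally'' gains nothing.
\item[$\diamond$] Concretely, take $v(\mu,x,u)=\sin x+u$ with $d=m=1$, which satisfies Hypotheses \ref{hyp:H} with globally bounded derivatives. Set $\bar X^s=0$, $\bar u^s_{\Lrm}=0$ and $H=\pi\,\mathds{1}_A$. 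Then the $L^2$ remainder $\sin(H)-H=-\pi\,\mathds{1}_A$ has norm exactly $\|H\|$, however small $\P(A)$ is.
\item[$\diamond$] This is Krasnosel'skii's theorem: on an atomless space, a superposition operator $L^2\to L^2$ is Fr\'echet differentiable at a point only if it is affine in the state variable. Hypothesis \ref{hyp:H}-$(iii)$ gives affinity in $u$ only.
\item[$\diamond$] Dominated convergence only yields G\^ateaux derivatives.
\item[$\diamond$] Proposition \ref{prop:WassGrad} concerns scalar law-invariant functionals. The small-o mechanism of Lemma \ref{lem:Small-o} works for $\widetilde{\Jpazo}$ because the remainder is paired against $H$ itself. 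It does not work for the $L^2$-valued map $\widetilde{\Vpazo}$.
\end{enumerate}

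The paper does not resolve this point either. Its proof takes $\D\widetilde{\Vpazo}$ as given, through Lemma \ref{lem:DifferentialConstraints} and the theorem's surjectivity hypothesis. So you should not claim that Step 1 follows from Hypotheses \ref{hyp:H}. You have two honest options:
\begin{enumerate}
\item[$\diamond$] Make strict differentiability of $\widetilde{\Vpazo}$ at $(\bar X^s,\bar u^s_{\Lrm})$ an explicit standing assumption, as the statement implicitly does. This holds, for instance, when $v(\mu,\cdot,\cdot)$ is affine in $(x,u)$ and its nonlocal part is regular.
\item[$\diamond$] Change the functional setting, for example by using $L^\infty$ perturbations of the state. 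The price is that the multiplier rule then produces a multiplier in a larger dual space, and you need a separate argument to place it in $L^2_{\P}(\Omega,\R^d)$.
\end{enumerate}
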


\begin{lem}[Differential of the lifted constraints]
\label{lem:DifferentialConstraints}
For every $(X,u_{\Lrm}) \in \Adm(\Ppazo_{\Lrm}^s)$, it holds that
\begin{equation*}%\label{eq:DiffLiftedConstraints}
\begin{aligned}
\Big( \D \widetilde{\Vpazo}(X,u_{\Lrm})(H,v_{\hspace{0.01cm}\Lrm}) \Big)(\omega)
& = \D_{(x,u)} v \Big( X_{\sharp}\P,X(\omega),u_{\Lrm}(\omega) \Big)(H(\omega),v_{\hspace{0.01cm}\Lrm}(\omega)) \\
& \hspace{0.45cm} + \INTDom{\D_{\mu} v \Big( X_{\sharp}\P,X(\omega),u_{\Lrm}(\omega) \Big)(X(\theta)) H(\theta)}{\Omega}{\P(\theta)}
\end{aligned}
\end{equation*}
for all $(H,v_{\hspace{0.01cm}\Lrm}) \in L^2_{\P}(\Omega,\R^d \times U)$. In particular, if there exists $c > 0$ such that
\begin{equation}
\label{eq:QuantitativeSurjectivityLag}
\Big| \D_{(x,u)} v \Big( X_{\sharp}\P,X(\omega),u_{\Lrm}(\omega) \Big)(y,v) \Big| \geq c \, \Vert (y,v) \Vert_{\R^d \times U}
\end{equation}
for almost every $\omega \in \Omega$ and every $(y,v) \in \R^d \times U$, then $\D \widetilde{\Vpazo}(X,u_{\Lrm})$ is surjective provided it is injective.
\end{lem}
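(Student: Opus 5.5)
The plan is to prove the two claims separately: first the differential formula by a splitting plus the first-order lift formula, then surjectivity by a Fredholm argument.

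\emph{Differential formula.} Fix $(X,u_{\Lrm}) \in \Adm(\Ppazo^s_{\Lrm})$, a direction $(H,v_{\Lrm})$ and $\mu := X_{\sharp}\P$. I would split the increment pointwise in $\omega$ as
\begin{equation*}
\begin{aligned}
\widetilde{\Vpazo}(X+H,u_{\Lrm}+v_{\Lrm}) - \widetilde{\Vpazo}(X,u_{\Lrm}) & = \Big[ v\big((X+H)_{\sharp}\P, X+H, u_{\Lrm}+v_{\Lrm}\big) - v\big(\mu, X+H, u_{\Lrm}+v_{\Lrm}\big) \Big] \\
& \hspace{0.45cm} + \Big[ v\big(\mu, X+H, u_{\Lrm}+v_{\Lrm}\big) - v\big(\mu, X, u_{\Lrm}\big) \Big].
\end{aligned}
\end{equation*}

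\begin{enumerate}
\item[$\diamond$] For the second bracket, apply a pointwise Taylor expansion in $(x,u)$ using Hypotheses \ref{hyp:H}-$(ii)$. This gives $\D_{(x,u)} v(\mu,X(\omega),u_{\Lrm}(\omega))(H(\omega),v_{\Lrm}(\omega))$ plus a remainder.
\item[$\diamond$] For the first bracket, freeze the point $(y,w) = (X(\omega)+H(\omega), u_{\Lrm}(\omega)+v_{\Lrm}(\omega))$. Apply Definition \ref{def:WassGrad} with the plan $\gamma=(X,X+H)_{\sharp}\P$, exactly as in the proof of Proposition \ref{prop:WassGrad}. This gives $\int_\Omega \D_{\mu} v(\mu,y,w)(X(\theta))H(\theta)\,\mathrm{d}\P(\theta)$ plus $\mathrm{o}(\|H\|)$.
\item[$\diamond$] Then use continuity of $(\mu,x,u,z)\mapsto \D_\mu v(\mu,x,u)(z)$ to replace $(y,w)$ by $(X(\omega),u_{\Lrm}(\omega))$ at the cost of another $\mathrm{o}(\|(H,v_{\Lrm})\|)$ error.
\end{enumerate}

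The main technical obstacle is making all these remainders $\mathrm{o}(\|(H,v_{\Lrm})\|_{L^2})$ \emph{in $L^2_{\P}$}, uniformly in $\omega$. The Hypotheses only bound second derivatives on bounded sets, while $X$ need not be bounded. I would handle this with the integral form of the remainder, which is bounded by $\sup|\D^2 v|\,|(H,v_{\Lrm})|^2$ on the set where $|X|,|u_{\Lrm}|,|H|,|v_{\Lrm}|\le R$. On the complement I would use a Nemytskii-type argument, combining first-derivative continuity with dominated convergence along sequences $(H_n,v_n)\to 0$. If needed, I would simply invoke the boundedness in the spirit of the $C^2_b$ framework of Proposition \ref{prop:WassHess}.

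\emph{Surjectivity.} Write $\D\widetilde{\Vpazo}(X,u_{\Lrm}) = \Apazo + \Kpazo$. Here $\Apazo$ is the multiplication operator by the matrix field $A(\omega) := \D_{(x,u)}v(\mu,X(\omega),u_{\Lrm}(\omega))$, and $\Kpazo$ is the nonlocal operator with kernel $k(\omega,\theta)=\D_\mu v(\mu,X(\omega),u_{\Lrm}(\omega))(X(\theta))$ acting on the $H$-component. Provided $k\in L^2(\P\otimes\P)$, which holds for instance when $\D_\mu v$ is bounded on the relevant range, $\Kpazo$ is Hilbert--Schmidt and hence compact. The lower bound \eqref{eq:QuantitativeSurjectivityLag} says each $A(\omega)\in\R^{d\times(d+m)}$ is injective with $|A(\omega)z|\ge c|z|$. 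For a linear map into $\R^d$ this forces the domain to have dimension at most $d$, so I would read the statement in the case where $A(\omega)$ is square. Then $A(\omega)$ is invertible with $|A(\omega)^{-1}|\le 1/c$, so $\Apazo$ is invertible on $L^2_{\P}$ with $\|\Apazo^{-1}\|\le 1/c$, and measurability of $\omega \mapsto A(\omega)^{-1}$ is automatic. Consequently
\begin{equation*}
\D\widetilde{\Vpazo} = \Apazo\,(\Id + \Apazo^{-1}\Kpazo)
\end{equation*}
is Fredholm of index zero by the Riesz--Schauder theory, so injectivity implies surjectivity.

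If one insists on a non-square $A(\omega)$, the same argument only shows that $\Apazo+\Kpazo$ has closed range and finite-dimensional kernel. Identifying exactly which dimensional reading is intended is the point I expect to require the most care.
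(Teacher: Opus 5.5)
\textbf{Overall route.} Your route is the paper's. The formula comes from separating the pointwise $(x,u)$-variation from the variation of the law. The surjectivity clause comes from writing $\D\widetilde{\Vpazo}(X,u_{\Lrm})=\Apazo+\Kpazo$ with $\Kpazo$ Hilbert--Schmidt, factoring it as $\Apazo(\Id+\Apazo^{-1}\Kpazo)$, and invoking the Fredholm alternative.

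Your dimension count is correct and is a real catch. A bound $|A(\omega)z|\geq c|z|$ for all $z\in\R^d\times U$ forces $U=\{0\}$. The paper uses exactly this, without saying so, when it asserts that the lower bound makes the multiplication part boundedly invertible. So the surjectivity clause is non-vacuous only without controls. Your proviso that the kernel be square-integrable (and $A(\cdot)$ essentially bounded) is likewise genuinely needed when $X$ is unbounded.

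Your closing hedge is off, however. A wide $A(\omega)$ can never satisfy the bound at all. Under the right-inverse reading suggested by the paper's remark on quantitative surjectivity, i.e.\ $|A(\omega)^{\top}p|\geq c|p|$, the operator $\Apazo$ is surjective with infinite-dimensional kernel. Then $\Apazo+\Kpazo$ has closed range of finite \emph{codimension}, not finite-dimensional kernel. The appropriate hypothesis would be injectivity of $\D\widetilde{\Vpazo}(X,u_{\Lrm})^{\star}$ rather than of $\D\widetilde{\Vpazo}(X,u_{\Lrm})$.

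\textbf{The step that fails.} It is the one you single out as the main technical obstacle: remainders that are $\mathrm{o}(\|(H,v_{\Lrm})\|_{L^2_{\P}})$ in $L^2_{\P}$ for the pointwise bracket. This is an obstruction, not a technicality. Over an atomless space, a superposition operator $L^2_{\P}\to L^2_{\P}$ is Fr\'echet differentiable only when its generator is affine.

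For instance, take $d=1$, $U=\{0\}$, $v(\mu,x,u)=\sin x$ and $X\equiv 0$, which is admissible and satisfies the lower bound with $c=1$. The direction $H=\pi\mathbf{1}_A$ gives $\widetilde{\Vpazo}(H)-\widetilde{\Vpazo}(0)-H=-H$. This remainder has the same $L^2_{\P}$-norm as $H$, however small $\P(A)$ is.

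None of your proposed fixes gets around this:
\begin{itemize}
\item Truncation only gives a bound $CR\|H\|_{L^2_{\P}}$ on $\{|H|\leq R\}$, and leaves the concentrating part on the complement untouched.
\item Dominated convergence along arbitrary sequences breaks down on exactly such concentrating directions.
\item A global bound on second derivatives controls $\|H\|_{L^4_{\P}}^2$, not $\|H\|_{L^2_{\P}}^2$.
\end{itemize}
The nonlocal bracket is unaffected, because there the increment enters through an integral against the plan, which rules out concentration.

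\textbf{What your argument does give.} Your splitting establishes the formula as a G\^ateaux derivative, by dominated convergence along $s\mapsto(X,u_{\Lrm})+s(H,v_{\Lrm})$ with the direction fixed. Alternatively, it gives a Fr\'echet derivative for perturbations in $L^{\infty}_{\P}$. Both versions need bounds on the derivatives of $v$ along the range of $(X,u_{\Lrm})$. This is all the surjectivity clause uses, since it only concerns the bounded operator defined by the formula.

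The paper's one-line proof does not address this point either. The first-order lift formula it invokes concerns scalar law-invariant functionals, whereas the local part of $\widetilde{\Vpazo}$ is a superposition operator.
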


\begin{proof}
The differential formula is an immediate consequence of Proposition \ref{prop:WassGrad} applied componentwisely to the lifted map $(X,u_{\Lrm}) \in L^2_{\P}(\Omega,\R^d \times U) \mapsto \mapsto v(X_{\sharp}\P,X,u_{\Lrm}) \in L^2_{\P}(\Omega,\R^d)$. Indeed, the first term comes from the pointwise differentiation in the random variables $(X,u_{\Lrm})$, whereas the second term corresponds to the Wasserstein differential in the law variable $X_{\sharp}\P \in \Pcal_2(\R^d)$.

To prove the surjectivity criterion, consider the decomposition
$\D \widetilde{\Vpazo}(X,u_{\Lrm}) = \widetilde{\Mpazo}_{X,u_{\Lrm}} + \widetilde{\Kpazo}_{X,u_{\Lrm}}$
where
\begin{equation*}
(\widetilde{\Mpazo}_{X,u_{\Lrm}}(H,v_{\hspace{0.01cm}\Lrm}))(\omega) = \D_{(x,u)} v \Big( X_{\sharp}\P,X(\omega),u_{\Lrm}(\omega) \Big)(H(\omega),v_{\hspace{0.01cm}\Lrm}(\omega))
\end{equation*}
and
\begin{equation*}
(\widetilde{\Kpazo}_{X,u_{\Lrm}}(H,v_{\hspace{0.01cm}\Lrm}))(\omega) = \INTDom{\D_{\mu} v \Big( X_{\sharp}\P,X(\omega),u_{\Lrm}(\omega) \Big)(X(\theta)) H(\theta)}{\Omega}{\P(\theta)}.
\end{equation*}
Under Hypotheses \ref{hyp:H}, the operator $\widetilde{\Kpazo}_{X,u_{\Lrm}}$ is Hilbert-Schmidt, hence compact \cite[Chapter 6]{Brezis}. On the other hand, \eqref{eq:QuantitativeSurjectivityLag} implies that $\widetilde{\Mpazo}_{X,u_{\Lrm}}$ is boundedly invertible, so in particular
\begin{equation*}
\D \widetilde{\Vpazo}(X,u_{\Lrm}) = \widetilde{\Mpazo}_{X,u_{\Lrm}} \Big( \Id + \widetilde{\Mpazo}_{X,u_{\Lrm}}^{-1} \widetilde{\Kpazo}_{X,u_{\Lrm}} \Big),
\end{equation*}
where $\widetilde{\Mpazo}_{X,u_{\Lrm}}^{-1} \widetilde{\Kpazo}_{X,u_{\Lrm}}$ is compact. If we assume that $\D \widetilde{\Vpazo}(X,u_{\Lrm})$ is injective, then $\Id + \widetilde{\Mpazo}_{X,u_{\Lrm}}^{-1} \widetilde{\Kpazo}_{X,u_{\Lrm}}$ is injective as well. It is therefore surjective by the Fredholm alternative on Hilbert spaces \cite[Theorem 6.6]{Brezis}, and so is $\D \widetilde{\Vpazo}(X,u_{\Lrm})$.
\end{proof}

\begin{proof}[Proof of Theorem \ref{thm:KKTLagrangian}]
This is just the standard multiplier rule for equality-constrained optimization in Hilbert spaces. Since $(\bar{X}^s,\bar{u}_{\Lrm}^s)$ is a strong local minimizer and the differential of the constraint map is surjective by Lemma \ref{lem:DifferentialConstraints}, the standard Banach-space KKT theorem applies, see for instance \cite[Chapter 3]{BonnansShapiro}. Hence, there exists $\bar{\Psi}^s \in L^2_{\P}(\Omega,\R^d)$ such that
\begin{equation*}
\nabla \widetilde{\Jpazo}(\bar{X}^s,\bar{u}_{\Lrm}^s) - \D \widetilde{\Vpazo}(\bar{X}^s,\bar{u}_{\Lrm}^s)^{\star}\bar{\Psi}^s = 0.
\end{equation*}
Recalling the definition of the lifted Hamiltonian \eqref{eq:HamiltonianLag}, this identity is equivalent to \eqref{cor:LagrangianKKTHamil}.
\end{proof}

\begin{rmk}[Quantitative surjectivity and observability]
\label{rmk:QuantitativeSurjectivity}
The sufficient condition \eqref{eq:QuantitativeSurjectivityLag} can be interpreted as a pointwise exact controllability estimate for the linearized constraint. In the control-affine case $v(\mu,x,u) = A(\mu,x) + B(\mu,x)u$, which we recall is structurally needed to apply the Eulerian-Lagrangian machinery \cite{Cavagnari2022}, it therefore amounts to imposing a uniform right-inverse estimate on the control matrix $B(\mu,x)$ along the stationary support. This is the finite-dimensional analogue of the observability or Hautus-type conditions that appear in Riccati theory and in turnpike estimates for distributed systems, see e.g. \cite{Tucsnak2009,Trelat2018} or \cite{Zabczyk2020} for the general Hilbertian framework. 
\end{rmk}

%%%%%%%%%%%%%%%%%%%%%%%%%%%%%%%%%%%%%%%%%%%%%%%%%%%%%%%%%%%%%%%%%%%%%%%

\subsection{KKT conditions for the static Eulerian problem}
\label{subsection:EulerianKKT}

We now shift our focus to studying the static version of the Eulerian optimal control problem 
\begin{equation*}
\label{eq:EulerianStatic}
(\Ppazo_{\mathrm{E}}^s) ~~ \left\{ 
\begin{aligned}
\min_{(\mu,u_{\mathrm{E}})} & \INTDom{L \Big(\mu,x,u_{\mathrm{E}}(x) \Big)}{\R^d}{\mu(x)}, \\
\text{s.t.}~ &
v(\mu,u_{\mathrm{E}}) = 0 ~~ \text{in $L^2_{\mu}(\R^d,\R^d)$},
\end{aligned}
\right.
\end{equation*}
where the minimization runs over the set of pairs $(\mu,u_{\mathrm{E}}) \in \Pcal_2(\R^d) \times L^2_{\mu}(\R^d,U)$. Similarly to what we did for $(\Ppazo_{\Lrm}^s)$ in Section \ref{subsection:LagrangianKKT}, we will study the following equivalent reformulation of the problem
\begin{equation*}
\left\{ 
\begin{aligned}
\min_{(\mu,u_{\mathrm{E}})} &~ \Jpazo \Big( (\Id,u_{\mathrm{E}})_{\sharp}\mu \Big) \\
\text{s.t.}~ &
v(\mu,u_{\mathrm{E}}) = 0 ~~ \text{in $L^2_{\mu}(\R^d,\R^d)$}.
\end{aligned}
\right.
\end{equation*}
For reasons that will become apparent below, the latter is expressed in terms of the relaxed cost 
\begin{equation}
\label{eq:RelaxedCost}
\Jpazo(\sigma) := \INTDom{L \big(\pi^1_{\sharp}\sigma,x,u \big)}{\R^d \times U}{\sigma(x,u)} 
\end{equation}
defined for each $\sigma \in \Pcal_2(\R^d \times U)$, and we also consider the admissible set of $(\Ppazo_{\mathrm{E}}^s)$, defined by 
\begin{equation*}
\Adm(\Ppazo_{\mathrm{E}}^s) := \bigg\{ (\mu,u_{\mathrm{E}}) \in \Pcal_2(\R^d) \times L^2_{\mu}(\R^d,\R^d) ~\,\textnormal{s.t.}~ v(\mu,u_{\mathrm{E}}) = 0 ~\, \text{in $L^2_{\mu}(\R^d,\R^d)$} \bigg\}.
\end{equation*}
In line with Lemma \ref{lem:DifferentialConstraints}, we make a small notational abuse and given some $(\mu,u_{\mathrm{E}}) \in \Adm(\Ppazo_{\mathrm{E}}^s)$, we denote by $\D v(\mu,u_{\mathrm{E}}) : L^2_{\mu}(\R^d,\R^d \times U) \to L^2_{\mu}(\R^d,\R^d)$ the operator 
\begin{equation}
\label{eq:IntrinsicEulerianDiff}
\Big( \D v(\mu,u_{\mathrm{E}})(\xi,v_{\mathrm{E}}) \Big)(x) := \D_{(x,u)}v \Big(\mu,x,u_{\mathrm{E}}(x) \Big)(\xi(x),v_{\mathrm{E}}(x)) 
+ \INTDom{\D_{\mu} v \Big( \mu,y,u_{\mathrm{E}}(y) \Big)(x)\xi(y)}{\R^d}{\mu(y)}
\end{equation}
defined for all $(\xi,v_{\mathrm{E}}) \in L^2_{\mu}(\R^d,\R^d \times U)$. As highlighted by the following theorem, and carefully justified within its proof, this convention stems from the fact that $\D v(\mu,u_{\mathrm{E}})$ does play the role of an intrinsic differential for the application $(\mu,u_{\mathrm{E}}) \in \Pcal_2(\R^d) \times L^2_{\mu}(\R^d,U) \mapsto v(\mu,u_{\mathrm{E}}) \in L^2_{\mu}(\R^d,\R^d)$ encoding the constraints of the static Eulerian problem.

\begin{thm}[First-order conditions for the static Eulerian problem]
\label{thm:KKTEulerian}
Suppose that Hypotheses \ref{hyp:H} hold, and let $(\bar{\mu}^s,\bar{u}_{\mathrm{E}}^s) \in \Adm(\Ppazo_{\mathrm{E}}^s)$ be a local $W_2$-minimizer of $(\Ppazo_{\mathrm{E}}^s)$, namely there is an $\epsilon >0$ such that 
\begin{equation*}
\Jpazo \Big( (\Id,\bar{u}_{\mathrm{E}}^s)_{\sharp}\bar{\mu}^s \Big) \leq \Jpazo \Big( (\Id,u_{\mathrm{E}})_{\sharp}\mu \Big)
\end{equation*}
for every $(\mu,u_{\Erm}) \in \Adm(\Ppazo^s_{\Erm})$ with $W_2(\mu,\bar{\mu}^s) \leq \epsilon$. In addition, suppose that the operator $\D v(\bar{\mu}^s,\bar{u}_{\mathrm{E}}^s) : L^2_{\bar{\mu}^s}(\R^d,\R^d \times U) \to L^2_{\bar{\mu}^s}(\R^d,\R^d)$ is injective, and that there exists a constant $c > 0$ for which
\begin{equation}
\label{eq:ClosedRange}
\inf_{x \in \supp(\bar{\mu}^s)} \Big| \, \D_{(x,u)} v \Big(\bar{\mu}^s,x,\bar{u}_{\mathrm{E}}^s(x) \Big)  (y,v) \, \Big|  \geq c \, \| (y,v) \|_{\R^d \times U}
\end{equation}
for all $(y,v) \in \R^d \times U$. Then, there exists a state-costate measure $\bar{\nu}^s \in \Pcal_2(\R^{2d})$ satisfying $\pi^1_{\sharp} \bar{\nu}^s = \bar{\mu}^s$, and such that
\begin{multline}
\label{eq:EulerianKKTStateCostate}
\int_{\R^{2d}} \bigg\langle \nabla_{\sigma} \Jpazo \Big((\Id,\bar{u}^s_{\mathrm{E}})_{\sharp} \bar{\mu}^s \Big)(x,\bar{u}_{\mathrm{E}}^s(x)) - \D_{(x,u)} v \Big(\bar{\mu}^s,x,\bar{u}_{\mathrm{E}}^s(x) \Big)^{\raisebox{-4pt}{\scriptsize$\top$}} p  \\
 - \INTDom{\D_{\mu} v \Big(\bar{\mu}^s,x,\bar{u}_{\mathrm{E}}^s(x) \Big)(y)^{\top} q}{\R^{2d}}{\bar{\nu}^s(y,q)} , \Xi(x,p) \bigg\rangle \, \mathrm{d} \bar{\nu}^s(x,p) = 0
\end{multline}
for every $\Xi \in L^2_{\bar{\nu}^s}(\R^{2d},\R^d \times U)$. Equivalently, there exists a multiplier $\bar{\lambda}^s \in L^2_{\bar{\mu}^s}(\R^d,\R^d)$ such that 
\begin{equation}
\label{eq:EulerianKKTMultiplier}
\nabla_{\sigma} \Jpazo \Big((\Id,\bar{u}^s_{\mathrm{E}})_{\sharp} \bar{\mu}^s \Big)(\Id,\bar{u}_{\mathrm{E}}^s) - \D v(\bar{\mu}^s,\bar{u}_{\mathrm{E}}^s)^{\star} \bar{\lambda}^s = 0
\end{equation}
in $L^2_{\bar{\mu}^s}(\R^d,\R^d \times U)$. 
\end{thm}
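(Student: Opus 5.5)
The plan is to reduce the Eulerian statement to the Lagrangian KKT conditions of Theorem \ref{thm:KKTLagrangian}, and then push the Lagrangian multiplier down to $\R^d$ with the composition operator of Lemma \ref{lem:LiftedConjugation}.

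First, fix a realization $\bar X^s \in L^2_{\P}(\Omega,\R^d)$ with $\bar X^s_{\sharp}\P = \bar\mu^s$, and set $\bar u^s_{\Lrm} := \bar u^s_{\Erm}\circ \bar X^s$. This pair is admissible for $(\Ppazo^s_{\Lrm})$, and its cost equals $\Jpazo((\Id,\bar u^s_{\Erm})_\sharp\bar\mu^s)$.

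\textbf{Step 1 (local minimality transfers).} Take any $(X,u_{\Lrm})\in\Adm(\Ppazo^s_{\Lrm})$ with $\|X-\bar X^s\|_{L^2_\P}\le\epsilon$. Then $W_2(X_\sharp\P,\bar\mu^s)\le\epsilon$. Set $\mu := X_\sharp\P$ and disintegrate $\sigma := (X,u_{\Lrm})_\sharp\P$ with respect to $\mu$ as $\sigma = \int \sigma_x \,\mathrm d\mu(x)$, and define $u_{\Erm}(x) := \int u\,\mathrm d\sigma_x(u)$, which lies in $U$ since $U$ is a subspace.
\begin{itemize}
\item Since $v$ is affine in $u$ by \ref{hyp:H}-$(iii)$, $v(\mu,x,u_{\Erm}(x)) = \int v(\mu,x,u)\,\mathrm d\sigma_x = 0$ for $\mu$-a.e. $x$, so $(\mu,u_{\Erm})$ is Eulerian admissible.
\item By convexity of $L$ in $u$ (\ref{hyp:H}-$(iv)$) and Jensen's inequality, $\Jpazo((\Id,u_{\Erm})_\sharp\mu)\le \widetilde\Jpazo(X,u_{\Lrm})$.
\end{itemize}
Combining these, $\widetilde\Jpazo(\bar X^s,\bar u^s_{\Lrm}) \le \Jpazo((\Id,u_{\Erm})_\sharp\mu) \le \widetilde\Jpazo(X,u_{\Lrm})$. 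Hence $(\bar X^s,\bar u^s_{\Lrm})$ is a local minimizer in the lifted problem; only the $X$-proximity is needed, which is even stronger than a strong local minimizer.

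\textbf{Step 2 (surjectivity).} Condition \eqref{eq:ClosedRange} gives \eqref{eq:QuantitativeSurjectivityLag} for $\P$-a.e. $\omega$, since $\bar X^s\in\supp\bar\mu^s$ almost surely. By Lemma \ref{lem:DifferentialConstraints}, it then suffices to show that $\D\widetilde\Vpazo$ is injective. This is the step I expect to be the main obstacle, because injectivity is only assumed on the horizontal part $L^2_{\bar\mu^s}$.

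Split $(H,w) = \Cpazo_{\bar X^s}(\xi,\eta) + (H^\perp,w^\perp)$ via conditional expectation. The nonlocal term of $\D\widetilde\Vpazo$ only sees $\E[H\,|\,\sigma(\bar X^s)]$, because its kernel depends on $\theta$ only through $\bar X^s(\theta)$. So it kills $H^\perp$. The local term $\widetilde\Mpazo$ has coefficients that are $\sigma(\bar X^s)$-measurable, so it preserves both the horizontal subspace and its orthogonal complement. Moreover, on the horizontal part $\D\widetilde\Vpazo$ is conjugate to the operator $\D v(\bar\mu^s,\bar u^s_{\Erm})$ of \eqref{eq:IntrinsicEulerianDiff}, using the $\D_\mu v(\cdot)(x)$ convention and Fubini to identify the kernels.

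Now suppose $\D\widetilde\Vpazo(H,w)=0$. The orthogonal component reads $\widetilde\Mpazo(H^\perp,w^\perp)=0$, which forces $(H^\perp,w^\perp)=0$ by the pointwise bound \eqref{eq:ClosedRange}. The horizontal component then reads $\D v(\bar\mu^s,\bar u^s_{\Erm})(\xi,\eta)=0$, which forces $(\xi,\eta)=0$ by the assumed injectivity. Hence $\D\widetilde\Vpazo$ is injective, and therefore surjective.

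\textbf{Step 3 (multiplier and push-down).} Theorem \ref{thm:KKTLagrangian} now yields $\bar\Psi^s\in L^2_\P(\Omega,\R^d)$ with $\nabla\widetilde\Jpazo - \D\widetilde\Vpazo^\star\bar\Psi^s = 0$. Apply Proposition \ref{prop:WassGrad} to the relaxed cost \eqref{eq:RelaxedCost}, viewed as a law-invariant functional of $(X,u)$. This gives $\nabla\widetilde\Jpazo(\bar X^s,\bar u^s_{\Lrm}) = \nabla_\sigma\Jpazo(\cdot)(\bar X^s,\bar u^s_{\Lrm})$, which is $\sigma(\bar X^s)$-measurable.

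Take the conditional expectation of the KKT identity and use the structure of $\D\widetilde\Vpazo^\star$, which commutes with $\E[\cdot\,|\,\sigma(\bar X^s)]$ by the invariance shown in Step 2. We obtain \eqref{eq:EulerianKKTMultiplier} with $\bar\lambda^s := \Cpazo^\star_{\bar X^s}\bar\Psi^s$, that is $\bar\lambda^s\circ\bar X^s = \E[\bar\Psi^s\,|\,\sigma(\bar X^s)]$.

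Finally, set $\bar\nu^s := (\bar X^s,\bar\Psi^s)_\sharp\P$, so that $\pi^1_\sharp\bar\nu^s = \bar\mu^s$. The Lagrangian identity, tested against $\Xi(\bar X^s,\bar\Psi^s)$ and rewritten through the change of variables, gives exactly \eqref{eq:EulerianKKTStateCostate}: the nonlocal term $\int \D_\mu v\,^\top q\,\mathrm d\bar\nu^s(y,q)$ is the image of $\int \D_\mu v(\ldots,\bar X^s(\theta))^\top\bar\Psi^s(\theta)\,\mathrm d\P(\theta)$. The equivalence between \eqref{eq:EulerianKKTStateCostate} and \eqref{eq:EulerianKKTMultiplier} follows the same way, since $\int p\,\mathrm d\bar\nu^s_x(p) = \bar\lambda^s(x)$; in the converse direction one may take $\bar\nu^s := (\Id,\bar\lambda^s)_\sharp\bar\mu^s$.
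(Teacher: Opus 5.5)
Your proof has the same architecture as the paper's. You realize the Eulerian minimizer by $(\bar X^s,\bar u^s_{\Erm}\circ\bar X^s)$ and transfer local minimality through disintegration, affinity in $u$ and Jensen (the paper's Lemma \ref{lem:LagrangianEulerian}). You get surjectivity of $\D\widetilde{\Vpazo}(\bar X^s,\bar u^s_{\Lrm})$ from \eqref{eq:ClosedRange} plus injectivity via Lemma \ref{lem:DifferentialConstraints}, apply Theorem \ref{thm:KKTLagrangian}, and push the Lagrangian identity forward by $(\bar X^s,\bar\Psi^s)$.

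The real difference is the injectivity step, and there your argument is the more complete one. The paper takes a nonzero kernel element $(H,v_{\Lrm})$, shows that its conditional expectation $(\xi,v_{\Erm})$ satisfies $\D v(\bar\mu^s,\bar u^s_{\Erm})(\xi,v_{\Erm})=0$, and declares a contradiction. But this only yields $(\xi,v_{\Erm})=0$, i.e.\ that $(H,v_{\Lrm})$ is orthogonal to $L^2_{\P}(\Omega,\R^d\times U;\sigma(\bar X^s))$, which is no contradiction. You observe that the nonlocal part annihilates that orthogonal complement and has $\sigma(\bar X^s)$-measurable range, while the multiplication part preserves both subspaces. This is exactly what lets you kill the vertical component with the pointwise bound \eqref{eq:ClosedRange}. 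The same invariance makes your derivation of \eqref{eq:EulerianKKTMultiplier}, by commuting $\D\widetilde{\Vpazo}^{\star}$ with $\E[\,\cdot\,|\sigma(\bar X^s)]$, a cleaner version of the paper's barycentric-projection step. Both produce the same multiplier, since $\bar\lambda^s\circ\bar X^s=\E[\bar\Psi^s|\sigma(\bar X^s)]$.

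One point in your write-up is not right as stated, although the paper makes the same slip. Fubini exchanges the order of integration, not the two arguments of a kernel. So it cannot identify the lifted kernel $\D_{\mu}v(\bar\mu^s,x,\bar u^s_{\Erm}(x))(y)$, acting on $\xi(y)$ as in Lemma \ref{lem:DifferentialConstraints}, with the kernel $\D_{\mu}v(\bar\mu^s,y,\bar u^s_{\Erm}(y))(x)$ written in \eqref{eq:IntrinsicEulerianDiff}. Your conjugacy, and hence your use of the injectivity hypothesis, holds for the operator built on the former kernel. That is also the one the paper's own injectivity computation silently uses. With that reading, the adjoint of the lifted nonlocal term pushes forward to $\int \D_{\mu}v(\bar\mu^s,y,\bar u^s_{\Erm}(y))(x)^{\top}q\,\mathrm{d}\bar\nu^s(y,q)$, as in \eqref{eq:EulerianProofTrans4} and Theorem \ref{thm:PMPEulerian}. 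Your claim to recover ``exactly'' \eqref{eq:EulerianKKTStateCostate} therefore holds only with the roles of $x$ and $y$ in that kernel exchanged.

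Note also that \eqref{eq:ClosedRange}, taken literally, is a lower bound for a linear map $\R^d\times U\to\R^d$ and therefore forces $U=\{0\}$. Your vertical step uses it in precisely this pointwise-injective sense.
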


\begin{rmk}[Concerning the Eulerian KKT theorem]
\label{rmk:EulerianKKT}
Note that Theorem \ref{thm:KKTEulerian} handles genuinely infinite-dimensional and highly nonlinear constraints, since the map $(\mu,u_{\Erm}) \mapsto v(\mu,u_{\Erm})$ is valued in the whole space $L^2_{\mu}(\R^d,\R^d)$, which itself depends on $\mu \in \Pcal_2(\R^d)$. This is precisely why the proof proceeds using the lifted Hilbert formulation of Section \ref{subsection:LagrangianKKT}, whose role is to disentangle the control and space variables. Another possible approach would have been to work at the level of occupation measures, but we opted for the methodology Lagrangian-to-Eulerian due to its direct connection with existing results at the level of dynamical problems. %Another important point is that the local minimality assumption is only imposed with respect to the state measure, and not with respect to a direct topology on Eulerian controls. This asymmetry is unavoidable in general, since as we shall see below, the barycentric projection which allows to build a candidate Eulerian control from a Lagrangian one is not continuous in any natural norm topology. A stronger and more symmetric notion of local minimality could be formulated directly at the level of occupation measures $(\Id,u_{\Erm})_{\sharp}\mu \in \Pcal_2(\R^d \times U)$, but the present argument does not require it.
\end{rmk}

The proof of Theorem \ref{thm:KKTEulerian} hinges upon the core relationships between the Lagrangian and Eulerian static problems exposed in the following lemma, which are directly inspired by \cite{Averboukh2025,Cavagnari2022}.

\begin{lem}[Transformations between Lagrangian and Eulerian problems]
\label{lem:LagrangianEulerian}
Under Hypotheses \ref{hyp:H}, the following statements hold. 
\begin{enumerate}
\item[$(a)$] For every $(\mu,u_{\mathrm{E}}) \in \Adm(\Ppazo_{\Erm}^s)$, there exists $(X,u_{\Lrm}) \in \Adm(\Ppazo_{\Lrm}^s)$ such that 
$X_{\sharp} \P = \mu$ and $u_{\Lrm} = u_{\mathrm{E}} \circ X$.
In particular, it holds that $\Jpazo( (\Id,u_{\mathrm{E}})_{\sharp}\mu) = \widetilde{\Jpazo}(X,u_{\Lrm})$.
\item[$(b)$] For every $(X,u_{\Lrm}) \in \Adm(\Ppazo_{\Lrm}^s)$, there exists $(\mu,u_{\mathrm{E}}) \in \Adm(\Ppazo_{\mathrm{E}}^s)$ such that 
$\Jpazo \big( (\Id,u_{\mathrm{E}})_{\sharp}\mu \big) \leq \widetilde{\Jpazo}(X,u_{\Lrm})$.
\end{enumerate}
Consequently, every local $W_2$-minimizer of $(\Ppazo_{\mathrm{E}}^s)$ is realized by a strong local minimizer of $(\Ppazo_{\Lrm}^s)$. 
\end{lem}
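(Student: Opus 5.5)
For $(a)$, I will realize the measure by a random variable and compose. Given $(\mu,u_{\Erm}) \in \Adm(\Ppazo_{\Erm}^s)$, I pick $X \in L^2_{\P}(\Omega,\R^d)$ with $X_\sharp\P = \mu$ (Skorokhod-type result of \cite{Berti2007}) and a Borel representative of $u_{\Erm}$, and set $u_{\Lrm} := u_{\Erm}\circ X$.

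\begin{itemize}
\item[$\diamond$] \emph{Integrability.} By change of variables, $\|u_{\Lrm}\|_{L^2_\P} = \|u_{\Erm}\|_{L^2_\mu}$. Since the constraint $v(\mu,\cdot,u_{\Erm}(\cdot))=0$ holds $\mu$-a.e., the set where it fails is $\mu$-null, so its preimage under $X$ is $\P$-null. Hence $v(X_\sharp\P,X,u_{\Lrm})=0$ $\P$-a.s.
\item[$\diamond$] \emph{Cost.} The identity $\int_\Omega L(\mu,X,u_{\Erm}\circ X)\,\mathrm{d}\P = \int L(\mu,x,u_{\Erm}(x))\,\mathrm{d}\mu = \Jpazo((\Id,u_{\Erm})_\sharp\mu)$ is again the change of variables formula, since $(X,u_{\Lrm})_\sharp\P=(\Id,u_{\Erm})_\sharp\mu$.
\end{itemize}

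For $(b)$, I will use the barycentric projection. Given $(X,u_{\Lrm})$, set $\mu:=X_\sharp\P$ and disintegrate $\sigma:=(X,u_{\Lrm})_\sharp\P = \int \sigma_x\,\mathrm{d}\mu(x)$. Define
\[
u_{\Erm}(x):=\int u\,\mathrm{d}\sigma_x(u),
\]
equivalently $u_{\Erm}\circ X = \E[u_{\Lrm}\,|\,\sigma(X)]$. This lies in $U$ because $U$ is a subspace, and in $L^2_\mu$ by Jensen's inequality.

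\begin{itemize}
\item[$\diamond$] \emph{Admissibility.} By Hypothesis \ref{hyp:H}-$(iii)$, $u\mapsto v(\mu,x,u)$ is affine on $U$. Hence $v(\mu,x,u_{\Erm}(x))=\int v(\mu,x,u)\,\mathrm{d}\sigma_x(u)$, using that affine maps commute with integration against probability measures, given the integrability furnished by the $L^2$ bounds. The integrand vanishes $\sigma$-a.e., so $v(\mu,x,u_{\Erm}(x))=0$ for $\mu$-a.e. $x$.
\item[$\diamond$] \emph{Cost.} Jensen's inequality with the convexity of $u\mapsto L(\mu,x,u)$ from \ref{hyp:H}-$(iv)$ gives $L(\mu,x,u_{\Erm}(x))\le \int L(\mu,x,u)\,\mathrm{d}\sigma_x(u)$. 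Integrating against $\mu$ yields $\Jpazo((\Id,u_{\Erm})_\sharp\mu)\le \Jpazo(\sigma)=\widetilde{\Jpazo}(X,u_{\Lrm})$.
\end{itemize}

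The only technical point is the measurability of $x\mapsto u_{\Erm}(x)$, which follows from the Borel measurability of the disintegration. I also need Jensen to apply, which requires the integral of $L$ to be well defined. If $L(\mu,x,\cdot)$ is not bounded below, I will argue via convexity with an affine minorant at $u_{\Erm}(x)$, which is integrable by the growth bounds on bounded sets.

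For the consequence, let $(\bar\mu^s,\bar u^s_{\Erm})$ be a local $W_2$-minimizer with radius $\epsilon$, and realize it by $(\bar X^s,\bar u^s_{\Lrm})$ through $(a)$. Take any $(X,u_{\Lrm})\in\Adm(\Ppazo^s_{\Lrm})$ with $\|X-\bar X^s\|_{L^2_\P}\le\epsilon$. The law $\mu=X_\sharp\P$ then satisfies $W_2(\mu,\bar\mu^s)\le \|X-\bar X^s\|\le\epsilon$, because the coupling $(X,\bar X^s)_\sharp\P$ is admissible. Applying $(b)$ produces $(\mu,u_{\Erm})$ with the same $\mu$, whence
\[
\widetilde{\Jpazo}(\bar X^s,\bar u^s_{\Lrm})=\Jpazo((\Id,\bar u^s_{\Erm})_\sharp\bar\mu^s)\le \Jpazo((\Id,u_{\Erm})_\sharp\mu)\le\widetilde{\Jpazo}(X,u_{\Lrm}).
\]
This shows local minimality in a neighbourhood that only constrains the state, and a fortiori strong local minimality. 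The key observation is that $(b)$ preserves the state law exactly, so the $W_2$-proximity transfers without any control of the Eulerian controls. I expect the main obstacle to be the careful justification of Jensen's inequality and of the affine identity in $(b)$ under merely local boundedness assumptions.
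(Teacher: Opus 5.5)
Your proposal is correct and follows essentially the paper's proof: a Skorokhod realization composed with a Borel representative of $u_{\Erm}$ for $(a)$, and for $(b)$ the barycentric projection of the control combined with the affinity of $v$ and Jensen's inequality for the convex $L$. The consequence is obtained exactly as in the paper, by transferring $W_2$-proximity through the coupling $(X,\bar X^s)_\sharp\P$. The differences are minor. You disintegrate $(X,u_{\Lrm})_\sharp\P$ over $\mu$ rather than $\P$ over $X$, and you verify the constraint $\mu$-a.e. directly instead of testing against $C^0_c$ functions and using density, which is if anything slightly cleaner.
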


\begin{proof}
We start by establishing item $(a)$. From the generalization of Skorokhod's theorem proven in \cite{Berti2007} and recollected in \cite[Proposition 2.1]{Cavagnari2022}, there exists for each $(\mu,u_{\mathrm{E}}) \in \Adm(\Ppazo_{\mathrm{E}}^s)$ a map $X \in L^2_{\P}(\Omega,\R^d)$ such that $X_{\sharp} \P = \mu$. Setting now $u_{\Lrm} := u_{\mathrm{E}} \circ X$, it can be easily checked that $u_{\Lrm} \in L^2_{\P}(\Omega,U)$, and 
\begin{equation*}
\begin{aligned}
\P \Big(\Big\{ \omega \in \Omega ~\textnormal{s.t.}~ v(X_{\sharp}\P,X(\omega),u_{\Lrm}(\omega)) = 0 \Big\} \Big) & = \P \Big(\Big\{ \omega \in \Omega ~\textnormal{s.t.}~ v(\mu,X(\omega),u_{\mathrm{E}}(X(\omega))) = 0 \Big\} \Big) \\
& = \P\Big( X^{-1} \Big( \Big\{ x \in \R^d ~\,\text{s.t.}~ v(\mu,x,u_{\mathrm{E}}(x)) = 0 \Big\} \Big) \Big) = 1
\end{aligned}
\end{equation*}
since $v(\mu,u_{\mathrm{E}}) = 0$ in $L^2_{\mu}(\R^d,\R^d)$ by assumption. Thus, we have shown that $(X,u_{\Lrm}) \in \Adm(\Ppazo_{\Lrm}^s)$. It may then be straightforwardly verified that $\Jpazo( (\Id,u_{\mathrm{E}})_{\sharp}\mu) = \widetilde{\Jpazo}(X,u_{\Lrm})$, which settles item $(a)$.  

We focus now on item $(b)$, and start given any $(X,u_{\Lrm}) \in \Adm(\Ppazo_{\Lrm}^s)$ by defining $\mu := X_{\sharp} \P \in \Pcal_2(\R^d)$. Then, by the disintegration theorem (see e.g. \cite[Theorem 5.3.1]{AGS}), there exists a $\mu$-almost uniquely determined Borel family of measures $\{\P_x\}_{x \in \R^d} \subset \Pcal(\Omega)$ such that  
\begin{equation*}
\INTDom{H(\omega)}{\Omega}{\P(\omega)} = \INTDom{\INTDom{H(\omega)}{\Omega_x}{\P_x(\omega)}}{\R^d}{\mu(x)}
\end{equation*}
for each bounded Borel map $H : \Omega \to \R$, where $\Omega_x := X^{-1}(\{x\}) \subset \Omega$. At this stage, we set
\begin{equation*}
u_{\mathrm{E}}(x) := \INTDom{u_{\Lrm}(\omega)}{\Omega_x}{\P_x(\omega)}, 
\end{equation*}
and note that the latter is Borel by standard measurability arguments, valued in $U$ by convexity of the integral (see e.g. \cite[Proposition 1.2.12]{AnalysisBanachSpaces}), and such that  
\begin{equation*}
\begin{aligned}
\NormLbis{u_{\mathrm{E}}}{\R^d,U}{\mu}^2 & = \INTDom{\bigg|\INTDom{u_{\Lrm}(\omega)}{\Omega_x}{\P_x(\omega)} \, \bigg|^2}{\R^d}{\mu(x)} \\
& \leq \INTDom{\INTDom{|u_{\Lrm}(\omega)|^2}{\Omega_x}{\P_x(\omega)}}{\R^d}{\mu(x)} 
= \NormLbis{u_{\Lrm}}{\Omega,\R^d}{\P} < +\infty
\end{aligned}
\end{equation*}
by Jensen's inequality. Concerning the admissibility of the pair $(\mu,u_{\mathrm{E}}) \in \Pcal_2(\R^d) \times L^2_{\mu}(\R^d,U)$ for $(\Ppazo_{\mathrm{E}}^s)$, note that for each $\phi \in C^0_c(\R^d,\R^d)$, it holds that
\begin{equation*}
\begin{aligned}
\INTDom{\Big\langle v(\mu,x,u_{\mathrm{E}}(x)) , \phi(x) \Big\rangle}{\R^d}{\mu(x)} & = \INTDom{\bigg\langle v \bigg( \mu,x,\INTDom{u_{\Lrm}(\omega)}{\Omega_x}{\P_x(\omega)} \bigg) , \phi(x) \bigg\rangle}{\R^d}{\mu(x)} \\
& = \INTDom{\INTDom{\Big\langle v(\mu,x,u_{\Lrm}(\omega)) , \phi(x) \Big\rangle}{\Omega_x}{\P_x(\omega)}}{\R^d}{\mu(x)} \\
& = \INTDom{\INTDom{\Big\langle v(X_{\sharp} \P,X(\omega),u_{\Lrm}(\omega)) , \phi(X(\omega)) \Big\rangle}{\Omega_x}{\P_x(\omega)}}{\R^d}{\mu(x)} \\
& = \INTDom{\Big\langle v(X_{\sharp} \P,X(\omega),u_{\Lrm}(\omega)) , \phi(X(\omega)) \Big\rangle}{\Omega}{\P(\omega)} = 0,
\end{aligned}
\end{equation*}
by the affinity assumption posited in Hypothesis \ref{hyp:H}-$(iii)$. Up to a standard density argument, this directly implies that $v(\mu,u_{\mathrm{E}}) = 0$ in $L^2_{\mu}(\R^d,\R^d)$. Whence, we have shown that $(\mu,u_{\mathrm{E}}) \in \Adm(\Ppazo_{\mathrm{E}}^s)$. Regarding the cost inequality, the latter follows simply from the convexity posited in Hypotheses \ref{hyp:H}-$(iv)$, since
\begin{equation*}
\begin{aligned}
\Jpazo(X,u_{\Lrm})  & = \INTDom{L \Big( X_{\sharp} \P,X(\omega),u_{\Lrm}(\omega) \Big)}{\Omega}{\P(\omega)} \\
& = \INTDom{\INTDom{L \Big( X_{\sharp} \P,X(\omega),u_{\Lrm}(\omega) \Big)}{\Omega_x}{\P_x(\omega)}}{\R^d}{\mu(x)} \\
& \geq \INTDom{L \bigg( \mu ,x ,\INTDom{u_{\Lrm}(\omega)}{\Omega_x}{\P_x(\omega)} \bigg)}{\R^d}{\mu(x)} = \Jpazo\Big( (\Id,u_{\mathrm{E}})_{\sharp} \mu \Big)
\end{aligned}
\end{equation*}
by Jensen's inequality, which achieves the proof of item $(b)$. 

To conclude the proof, let $(\bar{\mu}^s,\bar{u}_{\mathrm{E}}^s) \in \Adm(\Ppazo_{\mathrm{E}}^s)$ be a local $W_2$-minimizer for $(\Ppazo_{\mathrm{E}}^s)$, which we recall means that there is some $\epsilon >0$ such that  
$\Jpazo \big( (\Id,\bar{u}_{\mathrm{E}}^s)_{\sharp}\bar{\mu}^s \big) \leq \Jpazo\big((\Id,u_{\mathrm{E}})_{\sharp}\mu \big)$
for every $(\mu,u_{\mathrm{E}}) \in \Adm(\Ppazo_{\mathrm{E}}^s)$ satisfying $W_2(\bar{\mu}^s,\mu) \leq \epsilon$. Denote then by $(\bar{X}^s,\bar{u}^s_{\Lrm}) \in \Adm(\Ppazo_{\Lrm}^s)$ one of the Lagrangian pairs given by item $(a)$, and consider any $(X,u_{\Lrm}) \in \Adm(\Ppazo_{\Lrm}^s)$ satisfying $\NormLbis{(X,u_{\Lrm}) -(\bar{X}^s,\bar{u}_{\Lrm}^s)}{\Omega,\R^d}{\P} \leq \epsilon$. Letting then $(\mu,u_{\mathrm{E}}) \in \Adm(\Ppazo_{\mathrm{E}}^s)$ be the Eulerian pair built from the latter in item $(b)$, it follows by construction that $W_2(\bar{\mu}^s,\mu) \leq \NormLbis{X -\bar{X}^s}{\Omega,\R^d}{\P} \leq \epsilon$, which in turn yields
\begin{equation*}
\widetilde{\Jpazo}(X,u_{\Lrm}) \geq \Jpazo \Big( (\Id,u_{\mathrm{E}})_{\sharp} \mu \Big) 
 \geq \Jpazo \Big( (\Id,\bar{u}_{\mathrm{E}}^s)_{\sharp}\bar{\mu}^s \Big) 
 = \widetilde{\Jpazo}(\bar{X}^s,\bar{u}^s_{\Lrm}). 
\end{equation*}
In conclusion, every Lagrangian pair $(\bar{X}^s,\bar{u}^s_{\Lrm}) \in \Adm(\Ppazo_{\Lrm}^s)$ realizing a local $W_2$-minimizer $(\bar{\mu}^s,\bar{u}_{\mathrm{E}}^s) \in \Adm(\Ppazo_{\mathrm{E}}^s)$ is itself a strong local minimizer.  
\end{proof}

\begin{rmk}[$W_2$-local minimizers versus occupation measures]
\label{rmk:MuLocalminimizers}
The previous lemma shows that local minimality with respect to the sole measure variable is sufficient for the static Eulerian-to-Lagrangian transfer used in this paper, see also \cite{Averboukh2025}. This choice is deliberate, as in general, there is no useful continuity estimate allowing one to control the distance between the Eulerian controls reconstructed by disintegration from the distance between two neighboring Lagrangian controls. If one wishes to keep track of both the measure and the control variable in a single topology, a natural alternative is to work at the level of occupation measures $(\Id,u_{\Erm})_{\sharp}\mu \in \Pcal_2(\R^d \times U)$. We shall not pursue this relaxation viewpoint here, since the $W_2$-local notion is sufficient for the KKT analysis and the turnpike argument.
\end{rmk}

These preliminary relationships being laid out, we are ready to attack the proof of Theorem \ref{thm:KKTEulerian}. 

\begin{proof}[Proof of Theorem \ref{thm:KKTEulerian}]
Given a $W_2$-local minimizer $(\bar{\mu}^s,\bar{u}_{\mathrm{E}}^s) \in \Adm(\Ppazo_{\mathrm{E}}^s)$ for the static Eulerian problem, there exists by Lemma \ref{lem:LagrangianEulerian} a strong local minimizer $(\bar{X}^s,\bar{u}_{\Lrm}^s) \in \Adm(\Ppazo^s_{\Lrm})$ such that 
\begin{equation}
\label{eq:EulerProofReal}
\bar{X}^s_{\sharp} \P = \bar{\mu}^s \qquad \text{and} \qquad \bar{u}_{\Lrm}^s = \bar{u}_{\mathrm{E}}^s \circ \bar{X}^s.  
\end{equation}
Besides, we have shown in Lemma \ref{lem:DifferentialConstraints} that 
\begin{equation}
\label{eq:EulerianProofDiff}
\begin{aligned}
\Big( \D \widetilde{\Vpazo}(\bar{X}^s,\bar{u}_{\Lrm}^s) (H,v_{\hspace{0.01cm}\Lrm}) \Big)(\omega) & = \D_{(x,u)} v \Big( \bar{X}^s_{\sharp} \P ,\bar{X}^s(\omega),\bar{u}_{\mathrm{E}}^s(\bar{X}^s(\omega)) \Big)(H(\omega),v_{\hspace{0.01cm}\Lrm}(\omega)) \\
& \hspace{0.45cm} + \INTDom{\D_{\mu} v \Big( \bar{X}^s_{\sharp} \P ,\bar{X}^s(\omega),\bar{u}_{\mathrm{E}}^s(\bar{X}^s(\omega))\Big)(X(\theta)) H(\theta)}{\Omega}{\P(\theta)}
\end{aligned}
\end{equation}
for every $(H,v_{\hspace{0.01cm}\Lrm}) \in L^2_{\P}(\Omega,\R^d \times U)$. We start by proving that the operator $\D \widetilde{\Vpazo}(\bar{X}^s,\bar{u}^s_{\Lrm}) : L^2_{\P}(\Omega,\R^d \times U) \to L^2_{\P}(\Omega,\R^d)$ is surjective under our working assumptions. To this end, observe at first that
\begin{equation}
\label{eq:EulerianProofSigma}
\bar{\sigma}^s := (\bar{X}^s,\bar{u}_{\Lrm}^s)_{\sharp} \P = (\Id,\bar{u}_{\mathrm{E}}^s)_{\sharp} \bar{\mu}^s
\end{equation}
by construction, which implies in particular 
\begin{multline*}
\inf_{\omega \in \Omega} \Big| \, \D_{(x,u)} v \Big( \bar{X}^s_{\sharp} \P,\bar{X}^s(\omega),\bar{u}^s_{\Lrm}(\omega) \Big)(y,v) \, \Big| = \inf_{(x,u) \in \supp(\bar{\sigma}^s)} \Big| \, \D_{(x,u)} v( \bar{\mu}^s,x,u)  (y,v) \, \Big| \\
=\inf_{x \in \supp(\bar{\mu}^s)} \Big| \, \D_{(x,u)} v \Big( \bar{\mu}^s,x,\bar{u}_{\mathrm{E}}^s(x) \Big)  (y,v) \, \Big| 
\geq c \, \|(y,v)\|_{\R^d \times U}
\end{multline*}
where the last line follows from \eqref{eq:ClosedRange}. Concerning the injectivity, suppose by contradiction that there exists $(H,v_{\hspace{0.01cm}\Lrm}) \in \mathrm{Ker}(\D \widetilde{\Vpazo}(\bar{X}^s,\bar{u}^s_{\Lrm})) \setminus \{0\}$ and let  $(\xi,v_{\mathrm{E}}) \in L^2_{\bar{\mu}^s}(\R^d,\R^d \times U)$ be given by 
\begin{equation*}
(\xi(x),v_{\mathrm{E}}(x)) := \INTDom{(H(\omega),v_{\hspace{0.01cm}\Lrm}(\omega))}{\Omega_x}{\P_x(\omega)}
\end{equation*}
for $\bar{\mu}^s$-almost every $x \in \R^d$, where $\{\Omega_x\}_{x \in \R^d}$ and $\{\P_x\}_{x \in \R^d} \subset \Pcal(\Omega)$ are defined by disintegration as in the proof of Lemma \ref{lem:LagrangianEulerian} above. Then, for every Borel map $\zeta \in L^2_{\bar{\mu}^s}(\R^d,\R^d)$, it can be checked that 
\begin{equation*}
\begin{aligned}
& \Big\langle \D v(\bar{\mu}^s,\bar{u}_{\mathrm{E}}^s) (\xi,v_{\mathrm{E}}) , \zeta \Big\rangle_{L^2_{\bar{\mu}^s}(\R^d,\R^d)} \\
& =  \INTDom{\Big\langle \D_{(x,u)}v \Big(\bar{\mu}^s,x,\bar{u}^s_{\mathrm{E}}(x) \Big)(\xi(x),v_{\mathrm{E}}(x)) , \zeta(x) \Big\rangle}{\R^d}{\bar{\mu}^s(x)} \\
& \hspace{0.5cm} + \INTDom{\INTDom{\Big\langle\D_{\mu} v \Big(\bar{\mu}^s,x,\bar{u}^s_{\mathrm{E}}(x) \Big)(y) \xi(y) , \zeta(x) \Big\rangle}{\R^d}{\bar{\mu}^s(y)}}{\R^d}{\bar{\mu}^s(x)}  \\
& = \INTDom{\INTDom{\Big\langle \D_{(x,u)} v \Big( \bar{X}^s_{\sharp} \P,\bar{X}^s(\omega),\bar{u}^s_{\mathrm{E}} \circ \bar{X}^s(\omega) \Big) (H(\omega),v_{\hspace{0.01cm}\Lrm}(\omega)) , \zeta \circ X(\omega) \Big\rangle}{\Omega_x}{\P_x(\omega)}}{\R^d}{\bar{\mu}^s(x)} \\
& \hspace{0.5cm} + \INTDom{\Bigg( \INTDom{\INTDom{\Big\langle\D_{\mu} v \Big( \bar{X}^s_{\sharp} \P,\bar{X}^s(\omega),\bar{u}^s_{\mathrm{E}} \circ \bar{X}^s(\omega) \Big) (X(\theta))H(\theta) , \zeta \circ X(\omega) \Big\rangle}{\Omega_y}{\P_y(\theta)}}{\R^d}{\bar{\mu}^s(y)} \Bigg)}{\Omega}{\P(\omega)} \\
& = \Big\langle \D \widetilde{\Vpazo}(\bar{X}^s,\bar{u}_{\Lrm}^s) (H,v_{\hspace{0.01cm}\Lrm}) , \zeta \circ \bar{X}^s \Big\rangle_{L^2_{\P}(\Omega,\R^d)} 
 = 0
\end{aligned}
\end{equation*}
which contradicts the injectivity of $\D v(\bar{\mu}^s,\bar{u}_{\mathrm{E}}^s) : L^2_{\bar{\mu}}(\R^d,\R^d \times U) \to L^2_{\bar{\mu}}(\R^d,\R^d)$. Thus, both conditions entail the surjectivity of $\D \widetilde{\Vpazo}(\bar{X}^s,\bar{u}^s_{\Lrm}) : L^2_{\P}(\Omega,\R^d \times U) \to L^2_{\P}(\Omega,\R^d)$ by Lemma \ref{lem:DifferentialConstraints} above, and it follows from Theorem \ref{thm:KKTLagrangian} that there exists a multiplier $\bar{\Psi}^s \in L^2_{\P}(\Omega,\R^d)$ for which
\begin{equation}
\label{eq:EulerianProofLag}
\nabla \widetilde{\Jpazo}(\bar{X}^s,\bar{u}_{\Lrm}^s) - \D \widetilde{\Vpazo}(\bar{X}^s,\bar{u}_{\Lrm}^s)^{\star}\bar{\Psi}^s = 0
\end{equation}
in $L^2_{\P}(\Omega,\R^d \times U)$. Note that this may be equivalently written as
\begin{equation}
\label{eq:EulerianProofTrans0}
\INTDom{\bigg\langle \nabla \widetilde{\Jpazo}(\bar{X}^s,\bar{u}_{\Lrm}^s)(\omega) - \Big( \D \widetilde{\Vpazo}(\bar{X}^s,\bar{u}_{\Lrm}^s)^{\star}\bar{\Psi}^s \Big)(\omega) , (H(\omega),v_{\hspace{0.01cm}\Lrm}(\omega)) \bigg\rangle}{\Omega}{\P(\omega)} = 0
\end{equation}
for every $(H,v_{\hspace{0.01cm}\Lrm}) \in L^2_{\P}(\Omega,\R^d \times U)$.

To transfer the Lagrangian stationarity condition \eqref{eq:EulerianProofLag} back to the Eulerian framework, we introduce the state-costate measure defined by $\bar{\nu}^s := (\bar{X}^s,\bar{\Psi}^s)_{\sharp} \P \in \Pcal_2(\R^{2d})$, and notice that 
\begin{equation}
\label{eq:EulerianProofTrans1}
\nabla \widetilde{\Jpazo}(\bar{X}^s,\bar{u}_{\Lrm}^s) = \nabla_{\sigma} \Jpazo(\bar{\sigma}^s) \circ (\bar{X}^s,\bar{u}_{\Lrm}^s) 
= \nabla_{\sigma} \Jpazo \Big( (\Id,\bar{u}_{\mathrm{E}}^s)_{\sharp} \bar{\mu}^s \Big) \circ (\Id,\bar{u}_{\mathrm{E}}^s) \circ \bar{X}^s
\end{equation}
owing to the representation result of Proposition \ref{prop:WassGrad} combined with \eqref{eq:EulerProofReal} and \eqref{eq:EulerianProofSigma}. Likewise
\begin{equation}
\label{eq:EulerianProofTrans2}
\begin{aligned}
& \Big( \D \widetilde{\Vpazo}(\bar{X}^s,\bar{u}_{\Lrm}^s)^{\star}\bar{\Psi}^s \Big)(\omega) \\
& \hspace{0.45cm} = \D_{(x,u)} v \Big( \bar{\mu}^s , \bar{X}^s(\omega),\bar{u}_{\Lrm}^s(\omega) \Big)^{\raisebox{-4pt}{\scriptsize$\top$}} \bar{\Psi}^s(\omega) 
 + \INTDom{\D_{\mu} v \Big( \bar{\mu}^s , \bar{X}^s(\theta),\bar{u}_{\Lrm}^s(\theta) \Big)(\bar{X}^s(\omega))^{\top} \bar{\Psi}^s(\theta)}{\Omega}{\P(\theta)} \\
& \hspace{0.45cm} = \D_{(x,u)} v \Big( \bar{\mu}^s , \bar{X}^s(\omega),\bar{u}_{\mathrm{E}}^s \circ \bar{X}^s(\omega) \Big)^{\raisebox{-4pt}{\scriptsize$\top$}} \bar{\Psi}^s(\omega) 
 + \INTDom{\D_{\mu} v \Big( \bar{\mu}^s , \bar{X}^s(\theta),\bar{u}_{\mathrm{E}}^s \circ \bar{X}^s(\theta) \Big)(\bar{X}^s(\omega))^{\top} \bar{\Psi}^s(\theta)}{\Omega}{\P(\theta)}
\end{aligned}
\end{equation}
thanks to the expression provided in \eqref{eq:EulerianProofDiff}. At this stage, it follows from taking particular couples of the form $(H,v_{\hspace{0.01cm}\Lrm}) := \Xi \circ (\bar{X}^s,\bar{\Psi}^s) \in L^2_{\P}(\Omega,\R^d \times U)$ with $\Xi \in C^0_b(\R^{2d},\R^d \times U)$ while leveraging the information from \eqref{eq:EulerianProofTrans1} that
\begin{equation}
\label{eq:EulerianProofTrans3}
\begin{aligned}
& \INTDom{\Big\langle \nabla \widetilde{\Jpazo}(\bar{X}^s,\bar{u}_{\Lrm}^s)(\omega) , (H(\omega),v_{\hspace{0.01cm}\Lrm}(\omega)) \Big\rangle}{\Omega}{\P(\omega)} \\
& = \INTDom{\bigg\langle \nabla_{\sigma} \Jpazo \Big( (\Id,\bar{u}_{\mathrm{E}}^s)_{\sharp} \bar{\mu}^s \Big) \Big(\bar{X}^s(\omega),\bar{u}_{\mathrm{E}}^s \circ \bar{X}^s(\omega)\Big) \, , \, \Xi \circ \Big(\bar{X}^s(\omega),\bar{\Psi}^s(\omega) \Big) \bigg\rangle}{\Omega}{\P(\omega)} \\
& = \INTDom{\Big\langle \nabla_{\sigma} \Jpazo \Big( (\Id,\bar{u}_{\mathrm{E}}^s)_{\sharp} \bar{\mu}^s \Big)(x,\bar{u}_{\mathrm{E}}^s(x)) , \Xi(x,p) \Big\rangle}{\R^{2d}}{\bar{\nu}^s(x,p)}  
\end{aligned}
\end{equation}
and similarly using \eqref{eq:EulerianProofTrans2}, %together with the barycentric projection $\bar{\lambda}^s \in L^2_{\bar{\mu}^s}(\R^d,\R^d)$ introduced below
one has that
\begin{equation}
\label{eq:EulerianProofTrans4}
\begin{aligned}
& \INTDom{\Big\langle  \Big( \D \widetilde{\Vpazo}(\bar{X}^s,\bar{u}_{\Lrm}^s)^{\star}\bar{\Psi}^s \Big)(\omega) , (H(\omega),v_{\hspace{0.01cm}\Lrm}(\omega)) \Big\rangle}{\Omega}{\P(\omega)} \\
& = \INTDom{\bigg\langle \D_{(x,u)} v \Big( \bar{\mu}^s , \bar{X}^s(\omega),\bar{u}_{\mathrm{E}}^s \circ \bar{X}^s(\omega) \Big)^{\hspace{-0.1cm} \top} \bar{\Psi}^s(\omega) \, , \, \Xi \circ \Big(\bar{X}^s(\omega),\bar{\Psi}^s(\omega) \Big) \bigg\rangle}{\Omega}{\P(\omega)} \\
& \hspace{0.45cm} + \INTDom{\INTDom{\bigg\langle \D_{\mu} v \Big( \bar{\mu}^s , \bar{X}^s(\theta),\bar{u}_{\mathrm{E}}^s \circ \bar{X}^s(\theta) \Big)(\bar{X}^s(\omega))^{\top} \bar{\Psi}^s(\theta) \, , \, \Xi \circ \Big(\bar{X}^s(\omega),\bar{\Psi}^s(\omega) \Big) \bigg\rangle}{\Omega}{\P(\theta)}}{\Omega}{\P(\omega)} \\
& = \INTDom{\bigg\langle \D_{(x,u)} v \Big( \bar{\mu}^s , x,\bar{u}_{\mathrm{E}}^s(x) \Big)^{\hspace{-0.1cm} \top} p + \INTDom{\D_{\mu} v \Big( \bar{\mu}^s , y,\bar{u}_{\mathrm{E}}^s(y) \Big)(x)^{\top} q}{\R^{2d}}{\bar{\nu}^s(y,q)} \, , \, \Xi(x,p) \bigg\rangle}{\R^{2d}}{\bar{\nu}^s(x,p)}. 
\end{aligned}
\end{equation}
Merging together \eqref{eq:EulerianProofTrans3} and \eqref{eq:EulerianProofTrans4} in \eqref{eq:EulerianProofTrans0}, we recover the distributional identity \eqref{eq:EulerianKKTStateCostate}. 

To obtain the multiplier version \eqref{eq:EulerianKKTMultiplier} of the Eulerian KKT conditions, we let $\{\bar{\nu}^s_x\}_{x \in \R^d} \subset \Pcal_2(\R^d)$ be the disintegration of $\bar{\nu}^s \in \Pcal_2(\R^{2d})$ onto its first marginal, and 
$\bar{\lambda}^s(x) := \INTDom{p}{\R^d}{\bar{\nu}^s_x(p)}$
be its barycentric projection \cite[Definition 5.4.2]{AGS}, that is defined for $\bar{\mu}^s$-almost every $x \in \R^d$. It can be verified that $\bar{\lambda}^s \in L^2_{\bar{\mu}^s}(\R^d,\R^d)$, and recalling the definition \eqref{eq:IntrinsicEulerianDiff} of the intrinsic differential while choosing test functions of the form $\Xi(x,p) := \xi(x)$ with $\xi \in C^0_b(\R^d,\R^d \times U)$ in \eqref{eq:EulerianKKTStateCostate}, we finally get that 
\begin{equation*}
\INTDom{\bigg\langle \nabla_{\sigma} \Jpazo \Big( (\Id,\bar{u}_{\mathrm{E}}^s)_{\sharp} \bar{\mu}^s \Big)(x,\bar{u}_{\mathrm{E}}^s(x)) - \Big(\D v (\bar{\mu}^s,\bar{u}_{\mathrm{E}}^s)^{\star} \bar{\lambda}^s \Big)(x) , \xi(x) \bigg\rangle}{\R^d}{\bar{\mu}^s(x)} = 0,
\end{equation*}
which amounts to \eqref{eq:EulerianKKTMultiplier} since $\xi \in C^0_b(\R^d,\R^d \times U)$ is arbitrary. 
\end{proof}

The proof of our Eulerian turnpike theorem discussed in Section \ref{section:Turnpike} extensively relies on the Hamiltonian reformulation of the KKT conditions of Theorem \ref{thm:KKTEulerian}. To this end, we introduce the mapping 
\begin{equation*}%\label{eq:HamtiltonianExtended}
\Hcal(\Bnu) := \INTDom{H\big(\pi^1_{\sharp} \Bnu, x,p,u \big)}{\R^{2d} \times U}{\Bnu(x,p,u)}. 
\end{equation*}
defined for every $\Bnu \in \Pcal_2(\R^{2d} \times U)$. Under the working assumptions listed in Hypotheses \ref{hyp:H}, it can be shown via arguments similar to those detailed in Appendix \ref{section:AppendixHess} below that $ \Hcal : \Pcal_2(\R^{2d} \times U) \to \R$ is twice continuously Fréchet differentiable in the sense of Definition \ref{def:WassHess}, with bounded derivatives. 

\begin{cor}[Hamiltonian version of the Eulerian KKT conditions]
\label{cor:EulerianKKTHamil}
Let $(\bar{\mu}^s,\bar{u}_{\mathrm{E}}^s) \in \Adm(\Ppazo_{\mathrm{E}}^s)$ and suppose that $\bar{\nu}^s \in \Pcal_2(\R^{2d})$ is such that the stationarity condition \eqref{eq:EulerianKKTStateCostate} from Theorem \ref{thm:KKTEulerian} holds. Then, the measure $\bar{\Bnu}^s := (\pi^1,\pi^2,\bar{u}_{\mathrm{E}}^s \circ \pi^1)_{\sharp} \bar{\nu}^s \in \Pcal_2(\R^{2d} \times U)$ satisfies
$\nabla_{\Bnu} \Hcal(\bar{\Bnu}^s) = 0$.
\end{cor}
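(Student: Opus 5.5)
The plan is to compute $\nabla_{\Bnu}\Hcal(\bar{\Bnu}^s)$ explicitly, component by component in the variables $(x,p,u)$. I would then check that each component vanishes $\bar{\Bnu}^s$-almost everywhere: the $p$-component by admissibility, and the $(x,u)$-components by the stationarity condition \eqref{eq:EulerianKKTStateCostate}.

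\emph{Step 1: gradient formula for $\Hcal$.} Work in the lifted space. Take a random variable $(X,\Psi,u_{\Lrm})$ with law $\Bnu$, so that $\widetilde{\Hcal}(X,\Psi,u_{\Lrm}) = \widetilde{\Hpazo}(X,\Psi,u_{\Lrm})$ from \eqref{eq:HamiltonianLag}. Differentiate pointwise in $(X,\Psi,u_{\Lrm})$, and in the law $X_\sharp\P$ via Proposition \ref{prop:WassGrad} applied to $\mu\mapsto H(\mu,x,p,u)$, exactly as in Lemma \ref{lem:DifferentialConstraints}. Then read off the Eulerian representative through Proposition \ref{prop:WassGrad} and the uniqueness of gradients. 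This should give, for $\Bnu$-a.e. $(x,p,u)$,
\begin{equation*}
\nabla_{\Bnu}\Hcal(\Bnu)(x,p,u) = \Big( \nabla_x H(\mu,x,p,u) + \INTDom{\nabla_\mu H(\mu,y,q,w)(x)}{\R^{2d}\times U}{\Bnu(y,q,w)} \, , \, v(\mu,x,u) \, , \, \nabla_u H(\mu,x,p,u) \Big),
\end{equation*}
where $\mu = \pi^1_\sharp\Bnu$. Since $H = \langle p, v\rangle - L$, the measure-derivative term splits into $\int \D_\mu v(\mu,y,w)(x)^\top q \,\mathrm{d}\Bnu$ minus $\int \nabla_\mu L(\mu,y,w)(x)\,\mathrm{d}\Bnu$.

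\emph{Step 2: the $p$-component.} At $\bar{\Bnu}^s = (\pi^1,\pi^2,\bar{u}^s_{\Erm}\circ\pi^1)_\sharp\bar{\nu}^s$ we have $u = \bar{u}^s_{\Erm}(x)$ on the support, and $\pi^1_\sharp\bar{\Bnu}^s = \bar{\mu}^s$. Hence the $p$-component is $v(\bar{\mu}^s,x,\bar{u}^s_{\Erm}(x))$. It vanishes $\bar{\mu}^s$-a.e., and therefore $\bar{\Bnu}^s$-a.e., because $(\bar{\mu}^s,\bar{u}^s_{\Erm})\in\Adm(\Ppazo^s_{\Erm})$.

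\emph{Step 3: the $(x,u)$-components.} Applying the same lifting computation to the relaxed cost \eqref{eq:RelaxedCost} gives
\begin{equation*}
\nabla_\sigma\Jpazo(\sigma)(x,u) = \Big(\nabla_x L(\pi^1_\sharp\sigma,x,u) + \INTDom{\nabla_\mu L(\pi^1_\sharp\sigma,y,w)(x)}{\R^d\times U}{\sigma(y,w)}, \nabla_u L(\pi^1_\sharp\sigma,x,u)\Big).
\end{equation*}
With $\sigma = (\Id,\bar{u}^s_{\Erm})_\sharp\bar{\mu}^s$, which is the $(x,u)$-marginal of $\bar{\Bnu}^s$, the $(x,u)$-part of $\nabla_{\Bnu}\Hcal(\bar{\Bnu}^s)(x,p,\bar{u}^s_{\Erm}(x))$ should be exactly the negative of the vector field paired with $\Xi$ in \eqref{eq:EulerianKKTStateCostate}. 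Since \eqref{eq:EulerianKKTStateCostate} holds for every $\Xi\in L^2_{\bar{\nu}^s}(\R^{2d},\R^d\times U)$, that vector field is zero in $L^2_{\bar{\nu}^s}$. Pushing forward by the graph map $(x,p)\mapsto(x,p,\bar{u}^s_{\Erm}(x))$ then shows that the $(x,u)$-components vanish $\bar{\Bnu}^s$-a.e.

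\emph{Main obstacle.} The delicate point is Step 1: rigorously identifying the Wasserstein gradient of $\Hcal$, whose integrand depends on the marginal $\pi^1_\sharp\Bnu$, together with the bookkeeping of which argument of $\D_\mu v$ is integrated. I would handle it through the lift and Proposition \ref{prop:WassGrad}, checking that $\bar{\Psi}^s$ integrated against $\D_\mu v(\cdot,\bar X^s(\theta),\cdot)(\bar X^s(\omega))^\top$ matches the formula above, as in \eqref{eq:EulerianProofTrans2}. Membership in $\Tan_{\bar{\Bnu}^s}$ causes no difficulty, since the computed vector field is identically zero in $L^2_{\bar{\Bnu}^s}$.
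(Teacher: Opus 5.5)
Your proposal is correct and follows essentially the same route as the paper. The paper also computes $\nabla_{\Bnu}\Hcal(\bar{\Bnu}^s)$ componentwise: it quotes the formula from known first-order computations, whereas you rederive it via the lift and Proposition \ref{prop:WassGrad}. It then kills the $p$-component by admissibility and the $(x,u)$-components by identifying them with minus the integrand of \eqref{eq:EulerianKKTStateCostate}, closing with $C^0_b$ test functions and density rather than your graph push-forward.

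Your caution about which argument of $\D_{\mu} v$ is integrated is well placed. The form that matches the Hamiltonian gradient, and that the paper uses in its final display, is $\int \D_{\mu} v(\bar{\mu}^s,y,\bar{u}^s_{\Erm}(y))(x)^{\top} q \,\mathrm{d}\bar{\nu}^s(y,q)$ as derived in \eqref{eq:EulerianProofTrans4}. The version literally written in \eqref{eq:EulerianKKTStateCostate} has $x$ and $y$ swapped.
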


\begin{proof}
By a slight modification of the results in \cite[Section 5]{SetValuedPMP}, it can be shown that 
\begin{equation}
\label{eq:HamiltonianKKT0}
\nabla_{\Bnu} \Hcal(\bar{\Bnu}^s)(x,p,u) = 
\begin{pmatrix}
\nabla_x H(\bar{\mu}^s,x,p,u) + \INTDom{\nabla_{\mu} H(\bar{\mu}^s,y,q,v)(x)}{\R^d}{\bar{\Bnu}^s(y,q,v)} \\
\nabla_p H(\bar{\mu}^s,x,p,u) \\
\nabla_u H(\bar{\mu}^s,x,p,u)
\end{pmatrix}. 
\end{equation}
Recalling the definition \eqref{eq:HamiltonianClassical} of the local Hamiltonian $H : \Pcal_2(\R^d) \times \R^{2d} \times U \to \R$, we straightforwardly get that 
\begin{equation}
\label{eq:HamiltonianKKT1}
\begin{pmatrix}
\nabla_x H(\bar{\mu}^s,x,p,u) \\
\nabla_p H(\bar{\mu}^s,x,p,u) \\
\nabla_u H(\bar{\mu}^s,x,p,u)
\end{pmatrix}
=  
\begin{pmatrix}
\D_{(x,u)} v(\bar{\mu}^s,x,u)^{\top}p - \nabla_{(x,u)} L(\bar{\mu}^s,x,u) \\
v(\bar{\mu}^s,x,u) 
\end{pmatrix}
\end{equation}
up to a row permutation, and likewise
\begin{equation}
\label{eq:HamiltonianKKT2}
\begin{aligned}
\INTDom{\nabla_{\mu} H(\bar{\mu}^s,y,q,v)(x)}{\R^{2d} \times U}{\bar{\Bnu}^s(y,q,v)} & = \INTDom{\D_{\mu} v \Big(\bar{\mu}^s,y,\bar{u}_{\mathrm{E}}^s(y) \Big)(x)^{\top} q}{\R^{2d}}{\bar{\nu}^s(y,q)} \\
& \hspace{1.25cm} - \INTDom{\nabla_{\mu} L\Big(\bar{\mu}^s,y,\bar{u}_{\mathrm{E}}^s(y)\Big)(x)}{\R^d}{\bar{\mu}^s(y)}. 
\end{aligned}
\end{equation}
Recalling now the definition of the relaxed cost function $\Jpazo : \Pcal_2(\R^d \times U) \to \R$ given in \eqref{eq:RelaxedCost}, it can be checked by the arguments invoked above and excerpted from \cite[Section 5]{SetValuedPMP} that 
\begin{equation}
\label{eq:HamiltonianKKT3}
\nabla_{\sigma} \Jpazo \Big( (\Id,\bar{u}_{\Erm}^s)_{\sharp} \bar{\mu}^s \Big)(x,\bar{u}^s_{\Erm}(x)) = \nabla_{(x,u)} L \Big( \bar{\mu}^s,x,\bar{u}^s_{\Erm}(x) \Big) + \INTDom{\nabla_{\mu} L \Big( \bar{\mu}^s,y,\bar{u}^s_{\Erm}(y) \Big)(x)}{\R^d}{\bar{\mu}^s(y)}
\end{equation}
for $\bar{\mu}^s$-almost every $x \in \R^d$. Thus, combining \eqref{eq:HamiltonianKKT0}, \eqref{eq:HamiltonianKKT1}, \eqref{eq:HamiltonianKKT2} and \eqref{eq:HamiltonianKKT3}, we infer that for every $\Xi := (\xi,\zeta,v_{\Erm}) \in C^0_b(\R^{2d} \times U,\R^{2d} \times U)$, there holds
\begin{equation*}
\begin{aligned}
& \INTDom{\Big\langle \nabla_{\Bnu} \Hcal(\bar{\Bnu}^s) (x,p,u) , \Xi(x,p,u) \Big\rangle}{\R^{2d} \times U}{\bar{\Bnu}^s(x,p,u)} \\
& = \INTDom{\Big\langle v \Big(\bar{\mu}^s,x,\bar{u}_{\Erm}^s(x) \Big) , \zeta(x,p,\bar{u}_{\Erm}^s(x)) \Big\rangle}{\R^{2d}}{\bar{\nu}^s(x,p)} \\
& \hspace{0.45cm} + \int_{\R^{2d}} \bigg\langle \nabla_{\sigma} \Jpazo \Big((\Id,\bar{u}^s_{\mathrm{E}})_{\sharp} \bar{\mu}^s \Big)(x,\bar{u}_{\mathrm{E}}^s(x)) - \D_{(x,u)} v \Big(\bar{\mu}^s,x,\bar{u}_{\mathrm{E}}^s(x) \Big)^{\raisebox{-4pt}{\scriptsize$\top$}} p \\
& \hspace{5cm} - \INTDom{\D_{\mu} v \Big(\bar{\mu}^s,y,\bar{u}_{\mathrm{E}}^s(y) \Big)(x)^{\top} q}{\R^{2d}}{\bar{\nu}^s(y,q)} , (\xi,v_{\Erm})(x,p,\bar{u}_{\Erm}^s(x)) \bigg\rangle \, \mathrm{d} \bar{\nu}^s(x,p) \\
& = 0 
\end{aligned}
\end{equation*}
since $(\bar{\mu}^s,\bar{u}_{\mathrm{E}}^s) \in \Adm(\Ppazo_{\mathrm{E}}^s)$ and $\bar{\nu}^s \in \Pcal_2(\R^{2d})$ satisfies \eqref{eq:EulerianKKTMultiplier}, and the conclusion follows by density. 
\end{proof}

%%%%%%%%%%%%%%%%%%%%%%%%%%%%%%%%%%%%%%%%%%%%%%%%%%%%%%%%%%%%%%%%%%%%%%%	
%  							NEW SECTION AHEAD	     		 		  %
%%%%%%%%%%%%%%%%%%%%%%%%%%%%%%%%%%%%%%%%%%%%%%%%%%%%%%%%%%%%%%%%%%%%%%%

\section{Exponential turnpike theorems}
\label{section:Turnpike}

\setcounter{equation}{0} \renewcommand{\theequation}{\thesection.\arabic{equation}}

In this section, we discuss the main exponential turnpike results of the paper. To this end, we first prove in Section \ref{subsection:LagrangianTurnpike} a Hilbertian turnpike theorem for the Lagrangian problem $(\Ppazo_{\Lrm})$. Then, in Section \ref{subsection:EulerianTurnpike}, we formulate intrinsic second-order hypotheses on the Eulerian data, and show that they imply the necessary Lagrangian assumptions after lifting, thanks to the horizontal-vertical decomposition of the lifted Hessian discussed in Lemma \ref{lem:LiftedConjugation}. The Eulerian turnpike theorem for $(\Ppazo_{\Erm})$ is finally obtained in Theorem \ref{thm:TurnpikeEulerian}, by combining this implication with the realization procedure from Section \ref{section:Preliminaries}.

%%%%%%%%%%%%%%%%%%%%%%%%%%%%%%%%%%%%%%%%%%%%%%%%%%%%%%%%%%%%%%%%%%%%%%%

\subsection{Turnpike in the Lagrangian framework}
\label{subsection:LagrangianTurnpike}

In this subsection, we derive our first main result which takes the form of an exponential turnpike theorem for the Lagrangian meanfield optimal control problem, that we recall is given by 
\begin{equation*}
(\Ppazo_{\Lrm}) ~~ \left\{
\begin{aligned}
\min_{(X,u_{\Lrm})} & \bigg[ \INTSeg{\INTDom{L \Big( X(t)_{\sharp} \P , X(t,\omega),u_{\Lrm}(t,\omega) \Big)}{\Omega}{\P(\omega)}}{t}{0}{T} + \varphi(X(T)_{\sharp} \P) \bigg], \\
\text{s.t.} ~ & \left\{
\begin{aligned}
& \dot{X}(t,\omega) = v \Big( X(t)_{\sharp} \P,X(t,\omega),u_{\Lrm}(t,\omega) \Big), \\
& X(0)_{\sharp} \P = \mu^0. 
\end{aligned}
\right.
\end{aligned}
\right.
\end{equation*}
In the sequel, given some triple $(\bar{X}^s,\bar{\Psi}^s,\bar{u}_{\mathrm{L}}^s) \in L^2_{\P}(\Omega,\R^{2d} \times U)$, we shall represent the evaluation of the Hessian of the lifted Hamiltonian defined in \eqref{eq:HamiltonianLag} in terms of the operator matrix  
\begin{equation}
\label{eq:LiftedBlockStructureLag}
\nabla^2 \widetilde{\Hpazo}(\bar{X}^s,\bar{\Psi}^s,\bar{u}_{\mathrm{L}}^s) :=  \begin{pmatrix}
\widetilde{\Hpazo}_{XX} & \widetilde{\Hpazo}_{X \Psi} & \widetilde{\Hpazo}_{X u} \\
\widetilde{\Hpazo}_{\Psi X} & \widetilde{\Hpazo}_{\Psi \Psi} & \widetilde{\Hpazo}_{\Psi u} \\
\widetilde{\Hpazo}_{u X} & \widetilde{\Hpazo}_{u \Psi} & \widetilde{\Hpazo}_{uu}
\end{pmatrix} 
\in \Lpazo \Big(L^2_{\P}(\Omega,\R^{2d} \times U) \Big).
\end{equation}
With these notations in hand, we may now formulate the Hilbertian hypotheses and the corresponding Lagrangian turnpike theorem.

\begin{taggedhyp}{\textnormal{(LT)}} \hfill
\label{hyp:LT}
\begin{enumerate}
\item[$(i)$] The operator $\widetilde{\Hpazo}_{uu} \in \Lpazo(L^2_{\P}(\Omega,U))$ is boundedly invertible.  
\item[$(ii)$] There exists $\widetilde{\Cpazo} \in \Lpazo(L^2_{\P}(\Omega,\R^d))$ such that $\widetilde{\Hpazo}_{X u} \widetilde{\Hpazo}_{uu}^{-1} \widetilde{\Hpazo}_{u X} - \widetilde{\Hpazo}_{XX} = \widetilde{\Cpazo}^{\star} \widetilde{\Cpazo}$.
\item[$(iii)$] Denoting by 
\begin{equation*}
\widetilde{\Apazo} := \widetilde{\Hpazo}_{\Psi X} - \widetilde{\Hpazo}_{\Psi u} \widetilde{\Hpazo}_{uu}^{-1} \widetilde{\Hpazo}_{u X} \in \Lpazo(L^2_{\P}(\Omega,\R^d)), 
\end{equation*}
the pairs $(\widetilde{\Apazo},\widetilde{\Hpazo}_{\Psi u})$ and $(\widetilde{\Apazo}^{\star},\widetilde{\Cpazo}^{\star})$ are exponentially stabilizable. 
\end{enumerate}
\end{taggedhyp} 

\begin{thm}[Exponential turnpike in the Lagrangian framework]
\label{thm:TurnpikeLagrangian}
Let $(\bar{X}^s,\bar{\Psi}^s,\bar{u}_{\mathrm{L}}^s) \in L^2_{\P}(\Omega,\R^{2d} \times U)$ be a stationary triple for $(\Ppazo^s_{\Lrm})$ satisfying Hypotheses \ref{hyp:LT}. 

Then, there exist positive constants $\epsilon,\alpha,c > 0$ such that, under the smallness condition
\begin{equation}
\label{eq:SmallnessLag}
\NormLbis{X^0 - \bar{X}^s}{\Omega,\R^d}{\P} + \NormLbis{\nabla \widetilde{\varphi}(\bar{X}^s) + \bar{\Psi}^s}{\Omega,\R^d}{\P} 
+ \|\nabla^2 \widetilde{\varphi}(\bar{X}^s) \|_{\Lpazo(L^2_{\P}(\Omega,\R^d))} \leq \epsilon,
%
%+ \sup_{H \in L^2_{\P}(\Omega,\R^d)} \frac{\langle -\nabla^2 \widetilde{\varphi}(\bar{X}^s)H,H \rangle_{L^2_{\P}(\Omega,\R^d)}}{\NormLbis{H}{\Omega,\R^d}{\P}^2} 
\end{equation}
every optimal Pontryagin triple $(X^*(\cdot),\Psi^*(\cdot),u^*_{\Lrm}(\cdot)) \in \AC^2([0,T],L^2_{\P}(\Omega,\R^{2d})) \times \Ucal_{\Lrm}$ for $(\Ppazo_{\Lrm})$ satisfies the exponential turnpike property 
\begin{equation}
\label{eq:LagrangianTurnpike}
\NormLbis{X^*(t) - \bar{X}^s}{\Omega,\R^d}{\P} + \NormLbis{\Psi^*(t) - \bar{\Psi}^s}{\Omega,\R^d}{\P} + \NormLbis{u^*_{\Lrm}(t) - \bar{u}_{\Lrm}^s}{\Omega,U}{\P} \leq c \Big( e^{-\alpha t} + e^{-\alpha(T-t)} \Big)
\end{equation}
for all times $t \in [0,T]$. 
\end{thm}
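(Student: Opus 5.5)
Our strategy is the one of \cite{Trelat2018}, adapted to the lifted Hilbert space $L^2_{\P}(\Omega,\R^d)$. By Theorem \ref{thm:PMPLagrangian}, every optimal triple $(X^*(\cdot),\Psi^*(\cdot),u^*_{\Lrm}(\cdot))$ solves the Hamiltonian boundary-value problem \eqref{eq:HamiltonianDynLag} together with the maximization condition \eqref{eq:MaximizationLag}. Since $U$ is a subspace and $u \mapsto H$ is smooth, the maximization gives $\nabla_u \widetilde{\Hpazo}(X^*(t),\Psi^*(t),u^*_{\Lrm}(t)) = 0$. By Theorem \ref{thm:KKTLagrangian}, the stationary triple satisfies $\nabla \widetilde{\Hpazo}(\bar{X}^s,\bar{\Psi}^s,\bar{u}^s_{\Lrm}) = 0$.

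\textbf{Eliminating the control.} Hypothesis \ref{hyp:LT}-$(i)$ and the implicit function theorem in $L^2_{\P}(\Omega,\R^d\times\R^d\times U)$ give a $C^1$ map $(X,\Psi)\mapsto u(X,\Psi)$, defined near $(\bar{X}^s,\bar{\Psi}^s)$, with $\nabla_u\widetilde{\Hpazo}=0$. Its differential is $-\widetilde{\Hpazo}_{uu}^{-1}(\widetilde{\Hpazo}_{uX},\widetilde{\Hpazo}_{u\Psi})$.

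\textbf{Linearized system.} Set $\deltaX := X^*-\bar{X}^s$ and $\deltaPsi := \Psi^*-\bar{\Psi}^s$. Taylor expansion of \eqref{eq:HamiltonianDynLag} around the stationary point gives
\begin{equation*}
\frac{\mathrm{d}}{\mathrm{d}t}\begin{pmatrix}\deltaX\\ \deltaPsi\end{pmatrix} = \begin{pmatrix}\widetilde{\Apazo} & \widetilde{\Bpazo}\widetilde{\Bpazo}^{\star}\\ \widetilde{\Cpazo}^{\star}\widetilde{\Cpazo} & -\widetilde{\Apazo}^{\star}\end{pmatrix}\begin{pmatrix}\deltaX\\ \deltaPsi\end{pmatrix} + \mathcal{R}(\deltaX,\deltaPsi).
\end{equation*}
Here $\widetilde{\Bpazo}\widetilde{\Bpazo}^{\star} := -\widetilde{\Hpazo}_{\Psi u}\widetilde{\Hpazo}_{uu}^{-1}\widetilde{\Hpazo}_{u\Psi}$. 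This operator is nonnegative, because $\widetilde{\Hpazo}_{uu}$ is negative definite by the strict convexity in Hypothesis \ref{hyp:H}-$(iv)$ and the invertibility in $(i)$. The remainder satisfies $\|\mathcal{R}\|\le C\|(\deltaX,\deltaPsi)\|^2$ on bounded sets, which follows from Hypothesis \ref{hyp:H}-$(ii)$ and Proposition \ref{prop:WassHess}. The sign conventions ($\widetilde{\Hpazo}_{\Psi\Psi}=0$, and the Schur complement $\widetilde{\Hpazo}_{Xu}\widetilde{\Hpazo}_{uu}^{-1}\widetilde{\Hpazo}_{uX}-\widetilde{\Hpazo}_{XX}=\widetilde{\Cpazo}^{\star}\widetilde{\Cpazo}$) are those of \ref{hyp:LT}-$(ii)$. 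The boundary conditions are $\deltaX(0)=X^0-\bar{X}^s$ and $\deltaPsi(T)=-\nabla\widetilde{\varphi}(X^*(T))-\bar{\Psi}^s$. The latter equals $-(\nabla\widetilde{\varphi}(\bar{X}^s)+\bar{\Psi}^s)-\nabla^2\widetilde{\varphi}(\bar{X}^s)\deltaX(T)+O(\|\deltaX(T)\|^2)$, so every term is small by \eqref{eq:SmallnessLag}.

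\textbf{Riccati diagonalization.} By Hypothesis \ref{hyp:LT}-$(iii)$ and infinite-dimensional algebraic Riccati theory \cite{Zabczyk2020}, there are unique nonnegative self-adjoint solutions $E_-$ and $E_+$ of the control and filter algebraic Riccati equations. The closed-loop operators $\widetilde{\Apazo}-\widetilde{\Bpazo}\widetilde{\Bpazo}^{\star}E_-$ and $-(\widetilde{\Apazo}+E_+\widetilde{\Cpazo}^{\star}\widetilde{\Cpazo})^{\star}$ then generate exponentially stable semigroups; this is the Hilbertian version of the change of variables in \cite{Trelat2018}. We pass to new variables $(\deltaX,\deltaPsi)\mapsto(v,w)$ with $\deltaPsi = -E_-\deltaX + w$ and $v=\deltaX - E_+'w$, using a suitable Lyapunov-equation solution $E_+'$. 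This bounded invertible transformation block-diagonalizes the linear part into an exponentially stable forward component $v$ and an exponentially stable backward component $w$.

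\textbf{Fixed-point argument.} We solve the two-point problem by Duhamel: $v$ is integrated forward from $t=0$ and $w$ backward from $t=T$. The nonlinear map is posed on the weighted space
\begin{equation*}
\sup_{t\in[0,T]}\frac{\|v(t)\|+\|w(t)\|}{e^{-\alpha t}+e^{-\alpha(T-t)}},
\end{equation*}
and it is a contraction on a small ball. This gives \eqref{eq:LagrangianTurnpike} for $(X^*,\Psi^*)$; the bound for $u^*_{\Lrm}$ follows from the Lipschitz map $u(X,\Psi)$ of the first step.

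\textbf{Main obstacle.} The hardest step is identifying the optimal triple with the small fixed point. A priori, an optimal triple need not remain in the neighbourhood where the implicit function and the Taylor remainder bounds apply, and the boundary-value problem may have other, large solutions. Following \cite{Trelat2018}, we would first argue that the fixed point is the unique solution of the boundary-value problem in a small tube, uniformly in $T$. We would then show that optimal triples lie in this tube, using local optimality together with uniqueness of extremals in the neighbourhood. If necessary, we would restrict the statement to Pontryagin triples lying in that neighbourhood. A secondary difficulty is checking that the boundary coupling, namely the $\nabla^2\widetilde{\varphi}$ term and the Riccati feedback at $t=T$, leaves an invertible operator. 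This is exactly where the smallness of $\|\nabla^2\widetilde{\varphi}(\bar{X}^s)\|$ in \eqref{eq:SmallnessLag} is used.
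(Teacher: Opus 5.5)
\textbf{The gap is in the identification step.} This is the step you leave open in your ``Main obstacle'' paragraph, and the remedy you sketch there does not work. Your contraction produces \emph{some} solution of the nonlinear two-point boundary-value problem in a small tube around $(\bar{X}^s,\bar{\Psi}^s)$, unique in that tube. To conclude for \emph{every} optimal Pontryagin triple, you must show that an arbitrary optimal triple lies in the tube.

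Local optimality of the triple and local uniqueness of extremals are both statements about neighbourhoods. They cannot exclude an optimal triple that stays at distance of order one from $(\bar X^s,\bar\Psi^s,\bar u^s_{\Lrm})$, which is exactly one of the large solutions you mention. Restricting the statement to Pontryagin triples in the tube changes the theorem. As it stands, your argument gives the existence of a locally unique extremal with the turnpike estimate. This is essentially the form of the nonlinear result in \cite{Trelat2015}, not the claim for all optimal triples. The latter needs a genuinely global ingredient: for instance uniqueness of optimal solutions, as in the linear-quadratic case, or a cost-comparison or dissipativity argument giving horizon-uniform a priori smallness of $(X^*,\Psi^*,u^*_{\Lrm})-(\bar X^s,\bar\Psi^s,\bar u^s_{\Lrm})$.

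\textbf{Comparison with the paper.} The paper uses the same Riccati/Lyapunov change of variables: your $E_-$ and $E_+'$ are its $\Ppazo$ and $-\Epazo$ in \eqref{eq:ChangeVariables}. It builds no fixed point, however. It estimates the given optimal triple directly under the a priori bound \eqref{eq:AprioriEstLag}, and controls $(\deltaY,\deltaLambda)$ by variation of constants in the weighted norm $W_\beta$. It then bounds the unknown endpoint values $\deltaX(T)$ and $\deltaPsi(0)$ uniformly in $T$, using the detectability-based backward solution $Z$ of \eqref{eq:Ldef} and the identity obtained by differentiating $\langle \deltaX(t),\deltaPsi(t)\rangle$. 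The term $\nabla^2\widetilde{\varphi}(\bar X^s)$ is absorbed by smallness.

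This route dispenses with uniqueness and with inverting the terminal coupling. In your variables that coupling is $\Id + (\Ppazo - \nabla^2\widetilde{\varphi}(\bar X^s))\Epazo$. Smallness of $\nabla^2\widetilde{\varphi}(\bar X^s)$ only handles the perturbation. Bounded invertibility of $\Id+\Ppazo\Epazo$ itself must come from the detectability part of \ref{hyp:LT}-$(iii)$; for a scalar unstable unobserved mode it fails, since $\Ppazo\Epazo=-\Id$ there.

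The paper's route nevertheless meets the very point you isolated. All its estimates are derived under \eqref{eq:AprioriEstLag}, and its final step only shows that $\Delta\le\Delta_0$ implies $\Delta\le 2C\epsilon$. Without a continuity argument, this does not rule out optimal triples with $\Delta>\Delta_0$, and such an argument is unavailable because optimal triples need not depend continuously on $T$ or on the data. So you located the real difficulty correctly, but neither argument resolves it.

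\textbf{The remainder bound.} Your bound $\|\mathcal{R}\|\le C\|(\deltaX,\deltaPsi)\|^2$ does not follow from Hypotheses \ref{hyp:H}. These only give continuous second derivatives, hence at best a modulus as in \eqref{eq:ModulusRemainder}. Even that modulus is not available uniformly in $L^2_{\P}$. The pointwise part of $\nabla\widetilde{\Hpazo}$ is a superposition operator. For perturbations $h=c\,\mathbf{1}_A$ with $\P(A)\to 0$, the ratio of the Taylor remainder to $\|h\|_{L^2_{\P}}$ stays bounded away from zero unless the pointwise data are affine in the relevant variables. The paper takes \eqref{eq:ModulusRemainder} for granted as well. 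A rigorous contraction would need additional control of the deviations, e.g.\ in $L^\infty(\Omega)$.
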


Before attacking the proof of this result, a few comments are in order.

\begin{rmk}[On the size of terminal contributions]
\label{rmk:TerminalSmallness}
The smallness of the Hessian $\nabla^2 \widetilde{\varphi}(\bar{X}^s)$ in \eqref{eq:SmallnessLag} is a sufficient condition ensuring that the linearization of terminal transversality relation can be absorbed uniformly with respect to the time horizon. It is likely not optimal, and could be replaced in specific situations by sharper endpoint estimates or by suitable terminal constraints. We choose keep this formulation because it is both robust and transparent at the level of the diagonalization argument. We also stress that our turnpike theorems hold true without smallness assumptions for linear-quadratic problems, as the linearization arguments become trivial in this context, see e.g. \cite{Trelat2015,Trelat2018}.   
\end{rmk} 

\begin{rmk}[What about control constraints?]
We wish to bring attention to the fact that our theorem remains valid when $U \subset \R^m$ is a proper closed convex set, provided the stationary control $\bar{u}_{\Lrm}^s \in L^2_{\P}(\Omega,U)$ stays at a positive essential distance from $\partial U$. This guarantees that the maximizing control remains on an interior arc in the turnpike regime. Without such an interiority condition, uniform exponential turnpike estimates may fail even in one-dimensional linear-quadratic constrained problems. Indeed, given some constants $a,b >0$ and $x_0 > 0$, consider the basic optimal control problem 
\begin{equation*}
\left\{
\begin{aligned}
\min_{(x,u)} & \INTSeg{\big( x^2(t) + u^2(t) \big)}{t}{0}{T} \\
\mathrm{s.t.} & \left\{
\begin{aligned}
& \dot{x}(t) = a x(t) + b u(t), \\
& x(0) = x^0, 
\end{aligned}
\right. 
\end{aligned}
\right.
\end{equation*}
whose companion static problem reads 
\begin{equation*}
\left\{
\begin{aligned}
\min_{(x,u)} & ~ (x^2 + u^2) \\
\mathrm{s.t.} & ~ax + bu = 0. 
\end{aligned}
\right.
\end{equation*}
If we assume  e.g. that $U = \R$, then the turnpike point is given by $(\bar{x}^s,\bar{u}^s) = (0,0)$ and the linear-quadratic exponential turnpike theorem e.g. of \cite{Trelat2015} applies. If on the other hand we let $U := \R_+$, it can be checked quite easily that the unique optimal curve for the dynamic problem satisfies $x^*(t) \geq e^{at} x_0$ for all times $t \in [0,T]$, and thus drifts away exponentially fast from the turnpike $(\bar{x}^s,\bar{u}^s) = (0,0)$. 
\end{rmk}

The proof of the latter result is heavily inspired by that of \cite{Trelat2018}, and generalizes it by incorporating a terminal cost, in the spirit of \cite{Trelat2015}, and by making explicit the operator-theoretic ingredients that will later be transferred to the Eulerian setting.

\begin{proof}[Proof of Theorem \ref{thm:TurnpikeLagrangian}]
We divide the argument into three steps.
In Step 1, we rewrite the optimality system for $(\Ppazo_{\Lrm})$ as a semilinear perturbation of a hyperbolic linear Hamiltonian system around $(\bar{X}^s,\bar{\Psi}^s,\bar{u}^s_{\Lrm})$, while keeping track of the remainder terms in a form that is suitable for bootstrap.
In Step 2, we diagonalize the leading-order linear par of the system by means of an operator Riccati equation, and obtain a stable/unstable splitting of the dynamics.
In Step 3, we combine a nonlinear variation-of-constant estimate with endpoint controls (including the terminal transversality condition) to close the bootstrap and derive the horizon-uniform exponential turnpike bound.

\medskip
\noindent\textit{Step 1 -- Semilinear perturbation and elimination of the control.} 
The goal of this first step is to linearize the dynamics of an optimal triple around the turnpike point $(\bar{X}^s,\bar{\Psi}^s,\bar{u}^s_{\Lrm}) \in L^2_{\P}(\Omega,\R^{2d} \times U)$. We keep the remainder terms under an explicit $\mathrm{o}$-form throughout the proof, because this makes the bootstrap argument transparent and matches the nonlinear structure of \cite{Trelat2015,Trelat2018}. In concrete applications, these remainders can often be replaced by quadratic $\mathrm{O}$-bounds, once a priori smallness has been established.

To begin with, note that under the regularity assumptions listed in Hypotheses \ref{hyp:H}, there exists a constant $\Delta_0 > 0$ such that, for any $0 < \Delta < \Delta_0$, the following expansion
\begin{equation}
\label{eq:TaylorHamiltonianLag}
\begin{aligned}
\nabla \widetilde{\Hpazo} \Big(\bar{X}^s  + \deltaX(t), \bar{\Psi}^s + \deltaPsi(t), \bar{u}^s_{\Lrm} + \deltau(t)\Big) = 
\begin{pmatrix}
\widetilde{\Hpazo}_{X X} & \widetilde{\Hpazo}_{X \Psi} & \widetilde{\Hpazo}_{X u} \\
\widetilde{\Hpazo}_{\Psi X} & \widetilde{\Hpazo}_{\Psi \Psi} & \widetilde{\Hpazo}_{\Psi u} \\
\widetilde{\Hpazo}_{u X} & \widetilde{\Hpazo}_{u \Psi} & \widetilde{\Hpazo}_{uu}
\end{pmatrix} 
\begin{pmatrix}
\deltaX(t) \\
\deltaPsi(t) \\
\deltau(t)
\end{pmatrix} 
+ \mathrm{o}(
\deltaX(t),
\deltaPsi(t),
\deltau(t))
\end{aligned}
\end{equation}
holds at times $t \in [0,T]$ for any $(\deltaX(\cdot),\deltaPsi(\cdot),\deltau(\cdot)) \in L^{\infty}([0,T],L^2_{\P}(\Omega,\R^{2d} \times U))$ satisfying
\begin{equation*}
\| (\deltaX(t),\deltaPsi(t),\deltau(t)) \|_{L^2_{\P}(\Omega,\R^{2d} \times U)} \leq \Delta.
\end{equation*}
In \eqref{eq:TaylorHamiltonianLag}, we notably used the fact that $\nabla \widetilde{\Hpazo}(\bar{X}^s, \bar{\Psi}^s,\bar{u}^s_{\Lrm}) = 0$ as a consequence of Theorem \ref{thm:KKTLagrangian}. Moreover, by the uniform boundedness of the second derivatives of the lifted Hamiltonian on bounded sets, there exists a nondecreasing function $\rho : [0,\Delta_0] \to \R_+$ satisfying $\rho(r) \to 0$ as $r \to 0$, and such that for any perturbation triple satisfying $\sup_{t \in [0,T]} \| (\deltaX(t),\deltaPsi(t),\deltau(t)) \|_{L^2_{\P}(\Omega,\R^{2d} \times U)} \leq \Delta$, all remainder terms denoted by $\mathrm{o}(\deltaX(t),\deltaPsi(t),\deltau(t))$ satisfy
\begin{equation}
\label{eq:ModulusRemainder}
\NormLbis{\mathrm{o}(\deltaX(t),\deltaPsi(t),\deltau(t))}{\Omega,\R^{2d} \times U}{\P} \leq \rho(\Delta) \big\| (\deltaX(t),\deltaPsi(t),\deltau(t)) \big\|_{L^2_{\P}(\Omega,\R^{2d} \times U)}
\end{equation}
uniformly with respect to $t \in [0,T]$. We keep the compact $\mathrm{o}$-notation in the sequel, but we will systematically absorb it using \eqref{eq:ModulusRemainder}.
Then, given an optimal Pontryagin triple $(X^*(\cdot),\Psi^*(\cdot),u^*_{\Lrm}(\cdot)) \in \AC^2([0,T],L^2_{\P}(\Omega,\R^{2d})) \times \Ucal_{\Lrm}$ for $(\Ppazo_{\Lrm})$, we consider the perturbation curves given by 
\begin{equation*}
\deltaX(t) := X^*(t)-\bar{X}^s, \qquad 
\deltaPsi(t) := \Psi^*(t)-\bar{\Psi}^s \qquad \text{and} \qquad  \deltau(t) := u^*_{\Lrm}(t)- \bar{u}^s_{\Lrm}.
\end{equation*}
for all times $t \in [0,T]$. From there on, we make the \emph{a priori} bootstrap assumption that
\begin{equation}
\label{eq:AprioriEstLag}
\Delta := \sup_{t \in [0,T]} \big\| (\deltaX(t),\deltaPsi(t),\deltau(t)) \|_{L^2_{\P}(\Omega,\R^{2d} \times U)} + \INTSeg{\big\| (\deltaX(t),\deltaPsi(t))\|_{L^2_{\P}(\Omega,\R^{2d})}^2}{t}{0}{T} \leq \Delta_0,
\end{equation}
a fact which we will be able to guarantee a posteriori by a suitable choice of $\epsilon>0$ in the smallness condition \eqref{eq:SmallnessLag}. At this stage, we may infer from the third equation in \eqref{eq:TaylorHamiltonianLag} above combined with the bounded invertibility of $\widetilde{\Hpazo}_{uu} \in \Lpazo(L^2_{\P}(\Omega,U))$ that
\begin{equation}
\label{eq:varU}
\deltau(t) = -\widetilde{\Hpazo}_{uu}^{-1} \Big( \widetilde{\Hpazo}_{u X} \, \deltaX(t) + \widetilde{\Hpazo}_{u \Psi} \, \deltaPsi(t) \Big) + \mathrm{o}(\deltaX(t), \deltaPsi(t))
\end{equation}
for all times $t \in [0,T]$. Then, by leveraging again the linearization formula \eqref{eq:TaylorHamiltonianLag} together with the Legendre identity \eqref{eq:varU} and the Hamiltonian formulation of the Lagrangian PMP and KKT conditions detailed in Theorem \ref{thm:PMPLagrangian} and Theorem \ref{thm:KKTLagrangian} respectively, one may show that the perturbation curves $t \in [0,T] \mapsto (\deltaX(t),\deltaPsi(t)) \in L^2_{\P}(\Omega,\R^{2d})$ follow the coupled dynamics 
\begin{equation}
\label{eq:PerturbedSystem}
\derv{}{t} \begin{pmatrix}
\deltaX(t) \\
\deltaPsi(t)
\end{pmatrix} 
= 
\begin{pmatrix}
\widetilde{\Apazo} & -\widetilde{\Hpazo}_{\Psi u} \widetilde{\Hpazo}_{uu}^{-1} \widetilde{\Hpazo}_{u \Psi} \\
\widetilde{\Cpazo}^{\star} \widetilde{\Cpazo} & -\widetilde{\Apazo}^{\star}
\end{pmatrix} \begin{pmatrix}
\deltaX(t) \\
\deltaPsi(t)
\end{pmatrix} + \mathrm{o}(\deltaX(t), \deltaPsi(t)),
\end{equation}
in the mild sense \cite{Pazy2012}, and comply with the boundary conditions
\begin{equation}
\label{eq:PerturbedBoundary}
\begin{pmatrix}
\deltaX(0) \\
\deltaPsi(T) 
\end{pmatrix}
=
\begin{pmatrix}
X^0 - \bar{X}^s \\
- \nabla \widetilde{\varphi}(\bar{X}^s) - \bar{\Psi}^s  -\nabla^2 \widetilde{\varphi}(\bar{X}^s) \deltaX(T) 
\end{pmatrix} + \mathrm{o}(\deltaX(T)).
\end{equation}

\medskip
\noindent\textit{Step 2 -- Diagonalization of the leading-order linear part.}
We now diagonalize the linear Hamiltonian matrix in \eqref{eq:PerturbedSystem}. This is the operator-theoretic analogue of the stable/unstable splitting used in \cite{Trelat2018}, and relies on a Riccati equation and an associated Lyapunov equation.
To this end, we denote by $\Ppazo \in \Lpazo(L^2_{\P}(\Omega, \R^d))$ the unique nonnegative self-adjoint solution of the algebraic Riccati equation
\begin{equation}
\label{eq:RiccatiOperator}
\widetilde{\Apazo}^{\star} \Ppazo + \Ppazo \widetilde{\Apazo} + \widetilde{\Cpazo}^{\star} \widetilde{\Cpazo} + \Ppazo \widetilde{\Hpazo}_{\Psi u} \widetilde{\Hpazo}_{uu}^{-1} \widetilde{\Hpazo}_{u \Psi} \Ppazo = 0,
\end{equation}
whose well-posedness under Hypotheses \ref{hyp:LT} follows e.g. from \cite[Theorem 9.5]{Zabczyk2020}. Moreover, the same theorem guarantees that, under the exponential stabilizability of $(\widetilde{\Apazo}, \widetilde{\Hpazo}_{\Psi u})$, the operator $(\widetilde{\Apazo} + \widetilde{\Hpazo}_{\Psi u} \widetilde{\Hpazo}_{uu}^{-1} \widetilde{\Hpazo}_{u \Psi} \Ppazo) \in\Lpazo(L^2_{\P}(\Omega,\R^d))$ generates an exponentially stable $C^0$-semigroup $(\Spazo(t))_{t \geq 0} \subset \Lpazo(L^2_{\P}(\Omega,\R^d))$ satisfying 
\begin{equation}
\label{eq:ExponentialStabilitySemi}
\| \Spazo(t) \|_{\Lpazo(L^2_{\P}(\Omega, \R^d))} \leq M e^{-\beta t}
\end{equation} 
for all times $t \geq 0$ and some constants $M, \beta > 0$. In particular, the spectral bound of its generator is strictly negative, namely
\begin{equation*}
\sup \bigg\{\mathrm{Re}(\lambda) ~\,\text{s.t.}~ \lambda \in \sigma \Big(\widetilde{\Apazo} + \widetilde{\Hpazo}_{\Psi u} \widetilde{\Hpazo}_{uu}^{-1} \widetilde{\Hpazo}_{u \Psi} \Ppazo \Big) \bigg\} \leq - \beta < 0. 
\end{equation*}
We consider then the self-adjoint operator $\Epazo \in \Lpazo(L^2_{\P}(\Omega,\R^d))$ given by 
\begin{equation*}
\Epazo := -\INTSeg{\Spazo(t)^{\star} \widetilde{\Hpazo}_{\Psi u} \widetilde{\Hpazo}_{uu}^{-1} \widetilde{\Hpazo}_{u \Psi} \Spazo(t)}{t}{0}{+\infty},
\end{equation*}
and note that since the semigroup $(\Spazo(t))_{t \geq 0}$ is exponentially stable, it follows e.g. from \cite[Theorem 2.7]{Zabczyk2020} that the latter solves the algebraic Lyapunov equation
\begin{equation}
\label{eq:LyapunovOperator}
\left( \widetilde{\Apazo} + \widetilde{\Hpazo}_{\Psi u} \widetilde{\Hpazo}_{uu}^{-1} \widetilde{\Hpazo}_{u \Psi} \Ppazo \right) \Epazo + \Epazo \left( \widetilde{\Apazo} + \widetilde{\Hpazo}_{\Psi u} \widetilde{\Hpazo}_{uu}^{-1} \widetilde{\Hpazo}_{u \Psi} \Ppazo \right)^{\star} + \widetilde{\Hpazo}_{\Psi u} \widetilde{\Hpazo}_{uu}^{-1} \widetilde{\Hpazo}_{u \Psi} = 0.
\end{equation}
We are now in position to diagonalize the operator matrix
\begin{equation*}
\begin{pmatrix}
\widetilde{\Apazo} & -\widetilde{\Hpazo}_{\Psi u} \widetilde{\Hpazo}_{uu}^{-1} \widetilde{\Hpazo}_{u \Psi} \\
\widetilde{\Cpazo}^{\star} \widetilde{\Cpazo} & -\widetilde{\Apazo}^{\star},
\end{pmatrix},
\end{equation*}
which amounts to boundedly decoupling the linearized state and costate dynamics. To this end, following the ideas of \cite[Lemma 1]{Trelat2018}, we introduce the following bounded linear operator 
\begin{equation}
\label{eq:ChangeVariables}
\Tpazo := \begin{pmatrix}
\Id + \Epazo \Ppazo & \Epazo \\
\Ppazo & \Id
\end{pmatrix} \in \Lpazo(L^2_{\P}(\Omega,\R^{2d})) 
\end{equation}
whose inverse is given by 
\begin{equation}
\label{eq:ChangeVariablesInv}
\Tpazo^{-1} = \begin{pmatrix}
\Id & -\Epazo \\
-\Ppazo & \Id + \Ppazo \Epazo
\end{pmatrix} \in \Lpazo(L^2_{\P}(\Omega,\R^{2d})).
\end{equation}
Then, it may be checked through direct computations relying on \eqref{eq:RiccatiOperator} and \eqref{eq:LyapunovOperator}  that
\begin{equation}
\label{eq:diagonalization}
\Tpazo
 \begin{pmatrix}
\widetilde{\Apazo} & -\widetilde{\Hpazo}_{\Psi u} \widetilde{\Hpazo}_{uu}^{-1} \widetilde{\Hpazo}_{u \Psi} \\
\widetilde{\Cpazo}^{\star} \widetilde{\Cpazo} & -\widetilde{\Apazo}^{\star}
\end{pmatrix} 
\Tpazo^{-1} = 
\begin{pmatrix}
\widetilde{\Apazo} + \widetilde{\Hpazo}_{\Psi u} \widetilde{\Hpazo}_{uu}^{-1} \widetilde{\Hpazo}_{u \Psi} \Ppazo & 0 \\
0 & -(\widetilde{\Apazo} + \widetilde{\Hpazo}_{\Psi u} \widetilde{\Hpazo}_{uu}^{-1} \widetilde{\Hpazo}_{u \Psi} \Ppazo)^{\star}
\end{pmatrix},
\end{equation}
which leads us to consider the auxiliary perturbation curves $(\deltaY(\cdot),\deltaLambda(\cdot)) \in \AC^2([0,T],L^2_{\P}(\Omega,\R^{2d}))$ defined by 
\begin{equation}
\label{eq:ChangeVariablesFormula}
\begin{pmatrix}
\deltaY(t) \\
\deltaLambda(t)
\end{pmatrix} := \Tpazo \begin{pmatrix}
\deltaX(t) \\
\deltaPsi(t)
\end{pmatrix}
\end{equation}
for all times $t \in [0,T]$. By combining \eqref{eq:PerturbedSystem} and \eqref{eq:PerturbedBoundary} with \eqref{eq:diagonalization}, it may be finally checked that the latter pair solves the decoupled dynamics
\begin{equation}
\label{eq:DiagonalizedDynamics}
\derv{}{t}{} \begin{pmatrix}
\deltaY(t) \\
\deltaLambda(t)
\end{pmatrix} = \begin{pmatrix}
\widetilde{\Apazo} + \widetilde{\Hpazo}_{\Psi u} \widetilde{\Hpazo}_{uu}^{-1} \widetilde{\Hpazo}_{u \Psi} \Ppazo & 0 \\
0 & -(\widetilde{\Apazo} + \widetilde{\Hpazo}_{\Psi u} \widetilde{\Hpazo}_{uu}^{-1} \widetilde{\Hpazo}_{u \Psi} \Ppazo)^{\star}
\end{pmatrix} \begin{pmatrix}
\deltaY(t) \\
\deltaLambda(t)
\end{pmatrix} + \mathrm{o}(\deltaY(t), \deltaLambda(t))
\end{equation}
in the mild sense, complemented by the boundary conditions
\begin{equation}
\label{eq:DiagonalizedBoundary}
\begin{pmatrix}
\deltaY(0) \\
\deltaLambda(T)
\end{pmatrix}
= 
\begin{pmatrix}
(\Id + \Epazo \Ppazo)\deltaX(0) + \Epazo \deltaPsi(0) \\
\Ppazo \deltaX(T) + \deltaPsi(T)
\end{pmatrix}. 
\end{equation}

\medskip
\noindent\textit{Step 3 -- Exponential estimate, endpoint control, and closure of the bootstrap.} 
In this final step, we combine the hyperbolicity of the diagonalized dynamics \eqref{eq:DiagonalizedDynamics}, the quantitative control of the nonlinear remainders provided by \eqref{eq:ModulusRemainder}, and the terminal transversality relation to derive a horizon-uniform exponential estimate, and close the a priori bootstrap bound \eqref{eq:AprioriEstLag}.

To begin with, note that by the very definition of mild solution to a semilinear evolution equation, the unique integral curves of \eqref{eq:DiagonalizedDynamics} are given explicitly by 
\begin{equation*}
\left\{
\begin{aligned}
\deltaY(t) & = \Spazo(t) \deltaY(0) + \INTSeg{\Spazo(t-s) \, \mathrm{o}(\deltaY(s),\deltaLambda(s))}{s}{0}{t}, \\
\deltaLambda(t) & =  \Spazo^*(T-t) \deltaLambda(T) - \INTSeg{\Spazo^*(s-t) \, \mathrm{o}(\deltaY(s),\deltaLambda(s))}{s}{t}{T}, 
\end{aligned}
\right.
\end{equation*}
where $(\Spazo(t))_{t \geq 0} \subset \Lpazo(L^2_{\P}(\Omega,\R^d))$ is the semigroup generated by $\widetilde{\Apazo} + \widetilde{\Hpazo}_{\Psi u} \widetilde{\Hpazo}_{uu}^{-1} \widetilde{\Hpazo}_{u \Psi} \Ppazo$.
Let us denote by $\Rpazo(t) = \mathrm{o}(\deltaY(t),\deltaLambda(t))$ the remainder term in \eqref{eq:DiagonalizedDynamics}.
By the boundedness of the change of variables \eqref{eq:ChangeVariables}-\eqref{eq:ChangeVariablesInv}, the Legendre reduction \eqref{eq:varU}, and the modulus estimate \eqref{eq:ModulusRemainder}, we may use (up to enlarging it by a multiplicative constant) the same modulus $\rho : [0,\Delta_0] \to \R_+$  to write
\begin{equation*}%\label{eq:RemainderYLambda}
\NormLbis{\Rpazo(t)}{\Omega,\R^{2d}}{\P} \leq \rho(\Delta) \Big( \NormLbis{\deltaY(t)}{\Omega,\R^d}{\P} + \NormLbis{\deltaLambda(t)}{\Omega,\R^d}{\P} \Big)
\end{equation*}
uniformly for $t \in [0,T]$.
Using \eqref{eq:ExponentialStabilitySemi} and the variation-of-constants formula above, one may show that the following quantity
\begin{equation*}
W_{\beta} := \sup_{t \in [0,T]} \Big( e^{\beta t/2} \NormLbis{\deltaY(t)}{\Omega,\R^d}{\P} + e^{\beta(T-t)/2} \NormLbis{\deltaLambda(t)}{\Omega,\R^d}{\P} \Big),
\end{equation*}
obeys the estimate
\begin{equation*}
W_{\beta} \leq M \Big( \NormLbis{\deltaY(0)}{\Omega,\R^d}{\P} + \NormLbis{\deltaLambda(T)}{\Omega,\R^d}{\P} \Big) + \frac{4M}{\beta} \rho(\Delta) W_{\beta}.
\end{equation*}
Hence, upon choosing $\Delta_0 > 0$ in such a way that $4M \rho(\Delta_0) \leq \beta/2$, we get that $W_{\beta} \leq 2M (\|\deltaY(0)\| + \|\deltaLambda(T)\|)$, and therefore
\begin{equation}
\label{eq:FirstExponentialIneq}
\| \deltaY(t) \|_{L^2_{\P}(\Omega, \R^d)} + \| \deltaLambda(t) \|_{L^2_{\P}(\Omega, \R^d)} \leq 2M \Big( \| \deltaY(0) \|_{L^2_{\P}(\Omega, \R^d)} \, e^{-\beta t / 2} + \| \deltaLambda(T) \|_{L^2_{\P}(\Omega, \R^d)} \, e^{-\beta (T-t)/2} \Big)
\end{equation}
for all times $t \in [0,T]$, where $M,\beta > 0$ are the constants appearing in \eqref{eq:ExponentialStabilitySemi}. Recalling the change of variables \eqref{eq:ChangeVariablesFormula} and the expression of its inverse \eqref{eq:ChangeVariablesInv}, the estimate in \eqref{eq:FirstExponentialIneq} implies that 
\begin{equation}
\label{eq:SecondExponentialIneq}
\begin{aligned}
& \| \deltaX(t) \|_{L^2_{\P}(\Omega, \R^d)} + \| \deltaPsi(t) \|_{L^2_{\P}(\Omega, \R^d)} \\
& \leq c_0 \bigg( \| \deltaX(0) \|_{L^2_{\P}(\Omega, \R^d)} + \| \deltaPsi(0) \|_{L^2_{\P}(\Omega, \R^d)} + \| \deltaX(T) \|_{L^2_{\P}(\Omega, \R^d)} + \| \deltaPsi(T) \|_{L^2_{\P}(\Omega, \R^d)} \hspace{-0.05cm} \bigg) \Big( e^{-\beta t / 2} + e^{-\beta (T-t)/2} \Big) \\
& \leq c_0 \bigg( \| X^0 - \bar{X}^s \|_{L^2_{\P}(\Omega, \R^d)} + \| \nabla \widetilde{\varphi}(\bar{X}^s) + \bar{\Psi}^s \|_{L^2_{\P}(\Omega, \R^d)} \bigg) \Big( e^{-\beta t / 2} + e^{-\beta (T-t)/2} \Big) \\
& \hspace{0.45cm}  + c_0 \bigg( \| \deltaPsi(0) \|_{L^2_{\P}(\Omega, \R^d)} + \Big(1 + \| \nabla^2 \widetilde{\varphi}(\bar{X}^s) \|_{\Lpazo(L^2_{\P}(\Omega,\R^d))} \Big) \| \deltaX(T) \|_{L^2_{\P}(\Omega, \R^d)} + \rho(\Delta) \| \deltaX(T) \|_{L^2_{\P}(\Omega, \R^d)} \bigg) \\
& \hspace{12.25cm} \times \Big( e^{-\beta t / 2} + e^{-\beta (T-t)/2} \Big),
\end{aligned} 
\end{equation}
for some constant $c_0 > 0$ depending only on the operator norms of $\Ppazo,\Epazo \in \Lpazo(L^2_{\P}(\Omega,\R^d))$. 

Following the above considerations, establishing the exponential turnpike property boils down to deriving upper bounds on $\| \deltaX(T) \|_{L^2_{\P}(\Omega, \R^d)}$, $\| \deltaPsi(0) \|_{L^2_{\P}(\Omega, \R^d)}$ and $\| \deltaPsi(T) \|_{L^2_{\P}(\Omega, \R^d)}$ that are uniform in $T > 0$. To this end, recall first that under Hypothesis \ref{hyp:LT}-$(iii)$, there exists an operator $\Qpazo \in \Lpazo(L^2_{\P}(\Omega, \R^d))$ such that $\widetilde{\Apazo}^{\star} + \widetilde{\Cpazo}^{\star} \Qpazo$ generates an exponentially stable semigroup, and let $Z(\cdot) \in C^1([0,T],L^2_{\P}(\Omega,\R^d))$ be the unique solution of 
\begin{equation}
\label{eq:Ldef}
\dot{Z}(t) = -(\widetilde{\Apazo}^{\star} + \widetilde{\Cpazo}^{\star} \Qpazo) Z(t), \qquad
Z(T) = \deltaX(T).
\end{equation}
Denote by $(\Upazo(t))_{t \geq 0} \subset \Lpazo(L^2_{\P}(\Omega,\R^d))$ the semigroup generated by $-(\widetilde{\Apazo}^{\star} + \widetilde{\Cpazo}^{\star} \Qpazo)$. By Hypothesis \ref{hyp:LT}-$(iii)$, this semigroup is exponentially stable, hence there exist $M_\Qpazo,\gamma_\Qpazo > 0$ such that
\begin{equation*}
\| \Upazo(t) \|_{\Lpazo(L^2_{\P}(\Omega,\R^d))} \leq M_\Qpazo e^{-\gamma_\Qpazo t}
\end{equation*}
for all $t \geq 0$. In particular, $Z(t) = \Upazo(T-t)\deltaX(T)$ and there exists a constant $c_1 > 0$ independent of $T > 0$ such that
\begin{equation}
\label{eq:LIneqs}
\| Z(0) \|^2_{L^2_{\P}(\Omega, \R^d)} + \int_{0}^{T} \| Z(t) \|^2_{L^2_{\P}(\Omega, \R^d)} \leq c_1 \| \deltaX(T) \|^2_{L^2_{\P}(\Omega, \R^d)}. 
\end{equation}
Then, combining the dynamics of the first equation in \eqref{eq:PerturbedSystem} with the definition \eqref{eq:Ldef} of $Z(\cdot) \in C^1([0,T],L^2_{\P}(\Omega,\R^d))$, one further has that 
\begin{equation}
\label{eq:deltaXTEst1}
\begin{aligned}
\| \deltaX(T) \|^2_{L^2_{\P}(\Omega, \R^d)} & = \langle \deltaX(T) , Z(T) \rangle_{L^2_{\P}(\Omega,\R^d)} \\
& =  \langle \deltaX(0), Z(0) \rangle_{L^2_{\P}(\Omega, \R^d)} + \INTSeg{\big\langle \widetilde{\Hpazo}_{\Psi u} \widetilde{\Hpazo}_{uu}^{-1} \widetilde{\Hpazo}_{u \Psi} \deltaPsi(t), Z(t) \big\rangle_{L^2_{\P}(\Omega, \R^d)}}{t}{0}{T} \\
& \hspace{0.45cm} - \INTSeg{\big\langle \Qpazo^{\star} \widetilde{\Cpazo} \deltaX(t), Z(t) \big\rangle_{L^2_{\P}(\Omega, \R^d)}}{t}{0}{T} + \mathrm{o} \bigg( \INTSeg{\| (\deltaX(t), \deltaPsi(t)) \|_{L^2_{\P}(\Omega, \R^{2d})}^2}{t}{0}{T} \bigg).
\end{aligned}
\end{equation}
Next, it follows from the Cauchy-Schwarz inequality together with \eqref{eq:LIneqs} that there exist positive constants $c_2,c_3 > 0$ such that 
\begin{equation}
\label{eq:deltaXTEst2}
\begin{aligned}
& \bigg| \int_{0}^{T} \big\langle \widetilde{\Hpazo}_{\Psi u} \widetilde{\Hpazo}_{uu}^{-1} \widetilde{\Hpazo}_{u \Psi} \deltaPsi(t), Z(t) \big\rangle_{L^2_{\P}(\Omega, \R^d)} \, \mathrm{d}t \, \bigg| \\
& \hspace{0.45cm} \leq \bigg( \int_{0}^{T} \| \widetilde{\Hpazo}_{\Psi u} \widetilde{\Hpazo}_{uu}^{-1} \widetilde{\Hpazo}_{u \Psi} \deltaPsi(t) \|^{2}_{L^2_{\P}(\Omega, \R^d)} \, \mathrm{d}t \bigg)^{1/2} \left( \int_{0}^{T} \| Z(t) \|^{2}_{L^2_{\P}(\Omega, \R^d)} \, \mathrm{d}t \right)^{1/2} \\
&  \hspace{0.45cm} \leq c_2 \left( \int_{0}^{T} \| \widetilde{\Hpazo}_{uu}^{-1/2} \widetilde{\Hpazo}_{u \Psi} \deltaPsi(t) \|^2_{L^2_{\P}(\Omega, U)} \, \mathrm{d}t \right)^{1/2} \| \deltaX(T) \|_{L^2_{\P}(\Omega, \R^d)},
%
%&  \hspace{0.45cm} \leq c_2 \| \widetilde{\Hpazo}_{\Psi u} \widetilde{\Hpazo}_{uu}^{-1/2} \|_{\Lpazo \big(L^2_{\P}(\Omega, U),L^2_{\P}(\Omega, \R^d) \big)} \left( \int_{0}^{T} \| \widetilde{\Hpazo}_{uu}^{-1/2} \widetilde{\Hpazo}_{u \Psi} \deltaPsi(t) \, \mathrm{d}t \|^2_{L^2_{\P}(\Omega, U)} \, \mathrm{d}t \right)^{1/2}  \| \deltaX(T) \|_{L^2_{\P}(\Omega, \R^d)},
\end{aligned}
\end{equation}
and
\begin{equation}
\label{eq:deltaXTEst3}
\begin{aligned}
& \bigg| \int_{0}^{T} \big \langle \Qpazo^{\star} \widetilde{\Cpazo} \deltaX(t), Z(t) \big\rangle_{L^2_{\P}(\Omega, \R^d)} \, \mathrm{d}t \, \bigg| \\
& \hspace{0.45cm} \leq \| \Qpazo^{\star} \|_{\Lpazo(L^2_{\P}(\Omega, \R^d))} \left( \int_{0}^{T} \| \widetilde{\Cpazo} \deltaX(t) \|^{2}_{L^2_{\P}(\Omega, \R^d)} \, \mathrm{d}t \right)^{1/2} \left( \int_{0}^{T} \| Z(t) \|^{2}_{L^2_{\P}(\Omega, \R^d)} \, \mathrm{d}t \right)^{1/2} \\
& \hspace{0.45cm} \leq c_3 \left( \int_{0}^{T} \| \widetilde{\Cpazo} \deltaX(t) \|^{2}_{L^2_{\P}(\Omega, \R^d)} \, \mathrm{d}t \right)^{1/2} \| \deltaX(T) \|_{L^2_{\P}(\Omega, \R^d)}.
%
%& \hspace{0.45cm} \leq c_3 \| \Qpazo^{\star} \|_{\Lpazo(L^2_{\P}(\Omega, \R^d))} \left( \int_{0}^{T} \| \widetilde{\Cpazo} \deltaX(t) \|^{2}_{L^2_{\P}(\Omega, \R^d)} \, \mathrm{d}t \right)^{1/2} \| \deltaX(T) \|_{L^2_{\P}(\Omega, \R^d)}.
\end{aligned}
\end{equation}
Upon plugging both \eqref{eq:deltaXTEst2}-\eqref{eq:deltaXTEst3} in \eqref{eq:deltaXTEst1} while using \eqref{eq:LIneqs} to estimate the initial condition, we finally obtain that
\begin{multline}
\label{eq:deltaXEstFinal}
\| \deltaX(T) \|^2_{L^2_{\P}(\Omega, \R^d)} \leq c_4 \left( \|\deltaX(0)\|^2_{L^2_{\P}(\Omega, \R^d)} + \int_{0}^{T} \Big( \| \widetilde{\Hpazo}_{uu}^{-1/2} \widetilde{\Hpazo}_{u \Psi} \deltaPsi(t) \|^2_{L^2_{\P}(\Omega, U)} + \| \widetilde{\Cpazo} \deltaX(t) \|^2_{L^2_{\P}(\Omega, \R^d)} \Big) \, \mathrm{d}t \right) \\
+ \mathrm{o} \bigg( \INTSeg{\| (\deltaX(t), \deltaPsi(t)) \|_{L^2_{\P}(\Omega, \R^{2d})}^2}{t}{0}{T} \bigg)
\end{multline}
for a suitable constant $c_4 > 0$ that is still independent of $T > 0$. Applying similar arguments to the second equation in \eqref{eq:PerturbedSystem}, we likewise get
\begin{multline}
\label{eq:deltaPsiEstFinal}
\| \deltaPsi(0) \|^2_{L^2_{\P}(\Omega, \R^d)} \leq c_5 \left( \| \deltaPsi(T) \|^{2}_{L^2_{\P}(\Omega, \R^d)} + \int_{0}^{T} \left( \| \widetilde{\Hpazo}_{uu}^{-1/2} \widetilde{\Hpazo}_{u \Psi} \deltaPsi(t) \|^2_{L^2_{\P}(\Omega, U)} + \| \widetilde{\Cpazo} \deltaX(t) \|^{2}_{L^2_{\P}(\Omega, \R^d)} \right) \, \mathrm{d}t \right) \\
 + \mathrm{o} \bigg( \INTSeg{\| (\deltaX(t), \deltaPsi(t)) \|_{L^2_{\P}(\Omega, \R^{2d})}^2}{t}{0}{T} \bigg)
\end{multline}
for some constant $c_5 > 0$. At this stage, multiplying the first equation in \eqref{eq:PerturbedSystem} by $\deltaPsi(t)$ and the second one by $\deltaX(t)$ and subsequently integrating in time further yields 
\begin{equation*}
\begin{aligned}
& \int_{0}^{T}  \left( \| \widetilde{\Hpazo}_{uu}^{-1/2} \widetilde{\Hpazo}_{u \Psi} \deltaPsi(t) \|^2_{L^2_{\P}(\Omega, U)} + \| \widetilde{\Cpazo} \deltaX(t) \|^2_{L^2_{\P}(\Omega, \R^d)} \right) \, \mathrm{d}t \\
& \hspace{0.8cm} = \langle \deltaX(T) , \deltaPsi(T) \rangle_{L^2_{\P}(\Omega,\R^d)} - \langle \deltaX(0) , \deltaPsi(0) \rangle_{L^2_{\P}(\Omega,\R^d)} + \mathrm{o} \bigg( \INTSeg{\| (\deltaX(t), \deltaPsi(t)) \|_{L^2_{\P}(\Omega, \R^{2d})}^2}{t}{0}{T} \bigg) \\
& \hspace{0.8cm}\leq \sigma \Big( \| \deltaX(T) \|^2_{L^2_{\P}(\Omega, \R^d)} + \| \deltaPsi(0) \|^2_{L^2_{\P}(\Omega, \R^d)} \Big) + \dfrac{1}{4 \sigma} \bigg( \| \deltaX(0) \|^2_{L^2_{\P}(\Omega, \R^d)} + \| \delta\Psi(T) \|^2_{L^2_{\P}(\Omega, \R^d)} \bigg) \\
& \hspace{1.45cm} + \mathrm{o} \bigg( \INTSeg{\| (\deltaX(t), \deltaPsi(t)) \|_{L^2_{\P}(\Omega, \R^{2d})}^2}{t}{0}{T} \bigg)
\end{aligned}
\end{equation*}
for any real number $\sigma > 0$. The above estimate combined with \eqref{eq:deltaXEstFinal} and \eqref{eq:deltaPsiEstFinal} then implies that
\begin{equation}
\label{eq:NormIntegralEst}
\begin{aligned}
& \int_{0}^{T} \left( \| \widetilde{\Hpazo}_{uu}^{-1/2} \widetilde{\Hpazo}_{u \Psi} \deltaPsi(t) \|^2_{L^2_{\P}(\Omega, U)} + \| \widetilde{\Cpazo} \deltaX(t) \|^2_{L^2_{\P}(\Omega, \R^d)} \right) \, \mathrm{d}t \\
& \leq 2 c_6 \sigma \int_{0}^{T} \left( \| \widetilde{\Hpazo}_{uu}^{-1/2} \widetilde{\Hpazo}_{u \Psi} \deltaPsi(t) \|^2_{L^2_{\P}(\Omega, U)} + \| \widetilde{\Cpazo} \deltaX(t) \|^2_{L^2_{\P}(\Omega, \R^d)} \right) \, \mathrm{d}t \\
& \hspace{0.25cm} + \left( \dfrac{1}{4 \sigma} + 2 c_6 \sigma \right) \left( \| \deltaX(0) \|^2_{L^2_{\P}(\Omega, \R^d)} + \| \deltaPsi(T) \|^2_{L^2_{\P}(\Omega, \R^d)} \right) + \rho(\Delta) \INTSeg{\| (\deltaX(t), \deltaPsi(t)) \|_{L^2_{\P}(\Omega, \R^{2d})}^2}{t}{0}{T},
\end{aligned}
\end{equation}
where $c_6 := \max\{c_4,c_5\}$. Choosing finally $\sigma := 1/(4 c_6)$ in \eqref{eq:NormIntegralEst} and merging the latter with \eqref{eq:PerturbedBoundary}, \eqref{eq:deltaXEstFinal} and \eqref{eq:deltaPsiEstFinal}, we obtain at last 
\begin{equation}
\label{eq:BoundaryEstFinal}
\begin{aligned}
& \Big(1 - c_7 \|\nabla^2 \widetilde{\varphi}(\bar{X}^s) \|_{\Lpazo(L^2_{\P}(\Omega,\R^d))} \Big) \| \deltaX(T) \|_{L^2_{\P}(\Omega, \R^d)} + \| \deltaPsi(0) \|_{L^2_{\P}(\Omega, \R^d)} \\
& \hspace{3cm} \leq c_7 \bigg( \| X^0 - \bar{X}^s  \|_{L^2_{\P}(\Omega, \R^d)} + \| \nabla \widetilde{\varphi}(\bar{X}^s) + \bar{\Psi}^s \|_{L^2_{\P}(\Omega, \R^d)} \bigg)\\
& \hspace{3.45cm} + \rho(\Delta) \bigg( \NormLbis{\deltaX(T)}{\Omega,\R^d}{\P} +  \INTSeg{\| (\deltaX(t), \deltaPsi(t)) \|_{L^2_{\P}(\Omega, \R^{2d})}^2}{t}{0}{T} \bigg),
\end{aligned}
\end{equation}
where the constant $c_7 > 0$ is again independent of $T > 0$. Hence, choosing $\epsilon > 0$ in \eqref{eq:SmallnessLag} so that $c_7 \epsilon \leq 1/2$, we deduce from \eqref{eq:SecondExponentialIneq} and \eqref{eq:BoundaryEstFinal} that
\begin{equation}
\label{eq:TurnpikeEstimate}
\begin{aligned}
& \| \deltaX(t) \|_{L^2_{\P}(\Omega, \R^d)} + \| \deltaPsi(t) \|_{L^2_{\P}(\Omega, \R^d)} \\
& \leq \Bigg( c_8 \Big( \| X^0 - \bar{X}^s  \|_{L^2_{\P}(\Omega, \R^d)} + \| \nabla \widetilde{\varphi}(\bar{X}^s) + \bar{\Psi}^s \|_{L^2_{\P}(\Omega, \R^d)} \Big) \\
& \hspace{0.8cm} + \rho(\Delta) \Big( \NormLbis{\deltaX(T)}{\Omega,\R^d}{\P} +  \INTSeg{\| (\deltaX(t), \deltaPsi(t)) \|_{L^2_{\P}(\Omega, \R^{2d})}^2}{t}{0}{T} \Big) \Bigg) \Big( e^{-\beta t/2} + e^{-\beta(T-t)/2} \Big),
\end{aligned}
\end{equation}
for all times $t \in [0, T]$, where the constant $c_8 > 0$ is independent of $T > 0$. This, together with \eqref{eq:varU}, leads to a similar estimate for $\deltau(\cdot)$.

Finally, there remains to close the bootstrap \eqref{eq:AprioriEstLag}. By integrating \eqref{eq:TurnpikeEstimate} over $[0,T]$ and using \eqref{eq:BoundaryEstFinal} to control the endpoint terms, we obtain, for some constant $C>0$ independent of $T>0$ that
\begin{equation*}
\Delta \leq C \Big( \NormLbis{X^0 - \bar{X}^s}{\Omega,\R^d}{\P} + \NormLbis{\nabla \widetilde{\varphi}(\bar{X}^s) + \bar{\Psi}^s}{\Omega,\R^d}{\P} \Big) + C \rho(\Delta)\Delta.
\end{equation*}
Choosing $\epsilon > 0$ in \eqref{eq:SmallnessLag} so that $C\epsilon \leq \Delta_0/2$ and $C\rho(\Delta_0) \leq 1/2$ yields $\Delta \leq \Delta_0$, hence the a priori bound \eqref{eq:AprioriEstLag} is indeed satisfied. Then \eqref{eq:TurnpikeEstimate} gives the desired estimate \eqref{eq:LagrangianTurnpike}.
\end{proof}

%%%%%%%%%%%%%%%%%%%%%%%%%%%%%%%%%%%%%%%%%%%%%%%%%%%%%%%%%%%%%%%%%%%%%%%

\subsection{Turnpike in the Eulerian framework}
\label{subsection:EulerianTurnpike}

In this section, we establish exponential turnpike estimates for the Eulerian problem
\begin{equation*}
(\Ppazo_{\Erm}) ~~ \left\{
\begin{aligned}
\min_{(\mu,u_{\Erm})} & \bigg[ \INTSeg{\INTDom{L \Big( \mu(t),x,u_{\Erm}(t,x) \Big)}{\R^d}{\mu(t)(x)}}{t}{0}{T} + \varphi(\mu(T)) \bigg], \\
\textnormal{s.t.}~ & \left\{
\begin{aligned}
& \partial_t \mu(t) + \Div_x \Big( v(\mu(t),\cdot,u_{\Erm}(t,\cdot)) \mu(t) \Big) = 0, \\
& \mu(0) = \mu^0.
\end{aligned}
\right.
\end{aligned}
\right.
\end{equation*}
This results hinges upon the fact that it is possible to formulate intrinsic second-order hypotheses on the relaxed Hamiltonian, and then to show that they imply the Hilbertian hypotheses of Theorem \ref{thm:TurnpikeLagrangian} after lifting. In what follows given a stationary Eulerian Pontryagin triple $(\bar{\nu}^s,\bar{u}_{\Erm}^s) \in \Pcal_2(\R^{2d}) \times L^2_{\bar{\mu}_s}(\R^d,U)$, we consider the occupation measure
\begin{equation*}
\bar{\Bnu}^s := (\pi^1,\pi^2,\bar{u}_{\Erm}^s \circ \pi^1)_{\sharp}\bar{\nu}^s \in \Pcal_2(\R^{2d} \times U).
\end{equation*}
Recalling the definition of the relaxed Hamiltonian 
\begin{equation*}%\label{eq:RelaxedHamiltonian}
\Hcal(\Bnu) = \INTDom{H \big( \pi^1_{\sharp}\Bnu,x,p,u \big)}{\R^{2d} \times U}{\Bnu(x,p,u)}
\end{equation*}
presented in Corollary \ref{cor:EulerianKKTHamil}, we may represent its Wasserstein Hessian at $\bar{\Bnu}^s$ by the operator matrix
\begin{equation*}
\Hess \Hcal(\bar{\Bnu}^s) =
\begin{pmatrix}
\Hpazo_{xx} & \Hpazo_{xp} & \Hpazo_{xu} \\
\Hpazo_{px} & \Hpazo_{pp} & \Hpazo_{pu} \\
\Hpazo_{ux} & \Hpazo_{up} & \Hpazo_{uu}
\end{pmatrix},
\end{equation*}
and note that following Definition \ref{def:WassHess}, each block therein splits into
\begin{equation}
\label{eq:EulerianBlockSplitting}
\Hpazo_{\alpha\beta} = \Mpazo_{\alpha\beta} + \Kpazo_{\alpha\beta},
\end{equation}
where $\Mpazo_{\alpha\beta}$ is a multiplication operator and $\Kpazo_{\alpha\beta}$ is an integral operator. In particular, $\Kpazo_{\alpha\beta} = 0$ whenever $\alpha,\beta \neq x$, since nonlocal derivatives only appear at the level of the marginal $\pi^1_{\sharp} \bar{\Bnu}^s = \bar{\mu}^s$ following the computations detailed in \eqref{eq:HamiltonianKKT0}.

\begin{taggedhyp}{\textnormal{(ET)}} \hfill
\label{hyp:ET}
\begin{enumerate}
\item[$(i)$] The operator $\Hpazo_{uu} \in \Lpazo\big(L^2_{\bar{\Bnu}^s}(\R^{2d} \times U,U)\big)$ is boundedly invertible.
\item[$(ii)$] There exists $\Cpazo_{\mathrm{hor}} \in \Lpazo\big(L^2_{\bar{\Bnu}^s}(\R^{2d} \times U,\R^d)\big)$ such that
$\Hpazo_{xu}\Hpazo_{uu}^{-1}\Hpazo_{ux} - \Hpazo_{xx} = \Cpazo_{\mathrm{hor}}^{\star}\Cpazo_{\mathrm{hor}}$.
\item[$(iii)$] Denoting by 
\begin{equation*}
\Apazo_{\mathrm{hor}} := \Hpazo_{px} - \Hpazo_{pu}\Hpazo_{uu}^{-1}\Hpazo_{ux} \in \Lpazo(L^2_{\bar{\Bnu}^s}(\R^{2d} \times U,\R^d)),
\end{equation*}
the pairs $(\Apazo_{\mathrm{hor}},\Hpazo_{pu})$ and $(\Apazo_{\mathrm{hor}}^{\star},\Cpazo_{\mathrm{hor}}^{\star})$ are exponentially stabilizable. 
\item[$(iv)$] The multiplication operators $\Mpazo_{uu} \in \Lpazo\big(L^2_{\bar{\Bnu}^s}(\R^{2d} \times U,U)$ is boundedly invertible, and 
\begin{equation*}
\Qpazo_{\mathrm{vert}} := \Mpazo_{xu}\Mpazo_{uu}^{-1}\Mpazo_{ux} - \Mpazo_{xx} \in \Lpazo\big(L^2_{\bar{\Bnu}^s}(\R^{2d} \times U,\R^d)\big)
\end{equation*}
is nonnegative. Equivalently, there exists a multiplication operator $\Cpazo_{\mathrm{vert}} \in\Lpazo\big(L^2_{\bar{\Bnu}^s}(\R^{2d} \times U,\R^d)\big)$ such that
$\Qpazo_{\mathrm{vert}} = \Cpazo_{\mathrm{vert}}^{\star}\Cpazo_{\mathrm{vert}}$.
\item[$(v)$] Denoting by
\begin{equation*}
\Apazo_{\mathrm{vert}} := \Mpazo_{px} - \Mpazo_{pu}\Mpazo_{uu}^{-1}\Mpazo_{ux} \in \Lpazo\big(L^2_{\bar{\Bnu}^s}(\R^{2d} \times U,\R^d)\big)
\end{equation*}
the pairs $(\Apazo_{\mathrm{vert}},\Mpazo_{pu})$ and $(\Apazo_{\mathrm{vert}}^{\star},\Cpazo_{\mathrm{vert}}^{\star})$ are exponentially stabilizable as multiplication operators. Equivalently, the finite-dimensional matrix pairs obtained by evaluating their symbols at $\bar{\Bnu}^s$ are uniformly stabilizable and detectable.
\end{enumerate}
\end{taggedhyp}

\begin{rmk}[Horizontal versus vertical modes]
\label{rmk:HorizontalVertical}
Hypotheses \ref{hyp:ET}-$(i)$,$(ii)$ and $(iii)$ are the genuinely Eulerian, Wasserstein-intrinsic assumptions, which mimic the classical Lagrangian ones exposed in Hypotheses \ref{hyp:LT}. Their role is to control the part of the Hessian seen on the horizontal subspace %$L^2_{\P}(\Omega,\R^{2d} \times U;\sigma(\bar{Z}^s))$ 
generated by a stationary Lagrangian realization $\bar{Z}^s := (\bar{X}^s,\bar{\Psi}^s,\bar{u}_{\Lrm}^s)$ of $\bar{\Bnu}^s$. On the other hand, Hypotheses \ref{hyp:ET}-$(iv)$ and $(v)$ are additional pointwise conditions needed to control the lifted solutions on the orthogonal complement of $L^2_{\P}(\Omega,\R^{2d} \times U;\sigma(\bar{Z}^s))$ in the lifted Hilbert space. In practice, these vertical assumptions are much simpler to verify than global Riccati hypotheses, since they reduce to uniform pointwise finite-dimensional stabilizability and detectability of multiplication symbols. %This is precisely the mechanism that becomes transparent in linear-quadratic examples.
\end{rmk}

Let now $(\bar{X}^s,\bar{\Psi}^s) \in L^2_{\P}(\Omega,\R^{2d})$ be such that $(\bar{X}^s,\bar{\Psi}^s)_{\sharp}\P = \bar{\nu}^s$, set
$\bar{u}_{\Lrm}^s := \bar{u}_{\Erm}^s \circ \bar{X}^s$
and define $\bar{Z}^s := (\bar{X}^s,\bar{\Psi}^s,\bar{u}_{\Lrm}^s) \in L^2_{\P}(\Omega,\R^{2d} \times U)$. We shall write $\Pi^s := \mathbb{E}[\,\cdot\,\vert \sigma(\bar{Z^s})]$ for the conditional expectation with respect to $\sigma(\bar{Z}^s)$, which coincides with the orthogonal projection onto the horizontal subspace
\begin{equation*}
\mathsf{H}_{\mathrm{hor}}^s := L^2_{\P}(\Omega,\R^d;\sigma(\bar{Z^s})).
\end{equation*}
Likewise, we denote by $\Hsf_{\mathrm{vert}}^s := \ker \Pi^s$ the orthogonal complement of vertical directions, and recall that the lifted Hamiltonian $\widetilde{\Hpazo} : L^2(\Omega,\R^{2d} \times U) \to \R$ defined in \eqref{eq:HamiltonianLag} is such that $\widetilde{\Hpazo}(\bar{Z}^s) = \Hcal(\bar{\Bnu}^s)$. 

\begin{lem}[Block structure of the lifted Hamiltonian Hessian]
\label{lem:LiftedBlockStructure}
For every block $\widetilde{\Hpazo}_{\alpha\beta}$ in the matrix operator representation \eqref{eq:LiftedBlockStructureLag} of the lifted Hessian $\nabla^2 \widetilde{\Hpazo}(\bar{Z^s})$, there exist a multiplication operator $\widetilde{\Mpazo}_{\alpha\beta}$ and a bounded integral operator $\widetilde{\Kpazo}_{\alpha\beta}$ whose range is contained in $\Hsf_{\mathrm{hor}}^s$, such that
\begin{equation}
\label{eq:LiftedBlockStructure}
\widetilde{\Hpazo}_{\alpha\beta} = \widetilde{\Mpazo}_{\alpha\beta} + \widetilde{\Kpazo}_{\alpha\beta}\Pi^s.
\end{equation}
Moreover, the following holds for every indexes $\alpha,\beta \in \{x,p,u\}$. 
\begin{enumerate}
\item[$(a)$] $\widetilde{\Kpazo}_{\alpha\beta} = 0$ whenever $\alpha,\beta \neq x$;
\item[$(b)$] $\Hsf_{\mathrm{hor}}^s$ and $\Hsf_{\mathrm{vert}}^s$ are invariant under the action of each block $\widetilde{\Hpazo}_{\alpha\beta}$. 
\item[$(c)$] $(\widetilde{\Hpazo}_{\alpha\beta})_{\mid \Hsf_{\mathrm{hor}}^s} = \Cpazo_{\bar{Z^s}} \Hpazo_{\alpha \beta} \Cpazo_{\bar{Z^s}}^{\star}$, namely the restriction of $\widetilde{\Hpazo}_{\alpha\beta}$ to the horizontal space $\Hsf_{\mathrm{hor}}^s$  is unitarily conjugated to $\Hpazo_{\alpha\beta}$ through the composition operator $\Cpazo_{\bar{Z^s}}$. 
\item[$(d)$] $(\widetilde{\Hpazo}_{\alpha\beta})_{\mid \Hsf_{\mathrm{vert}}^s} = \widetilde{\Mpazo}_{\alpha \beta}$, namely the restriction of $\widetilde{\Hpazo}_{\alpha\beta}$ to the vertical space $\Hsf_{\mathrm{vert}}^s$ coincides with the multiplication operator $\widetilde{\Mpazo}_{\alpha\beta}$.
\end{enumerate}
\end{lem}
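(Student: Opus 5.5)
The plan is to apply Proposition \ref{prop:WassHess} and Lemma \ref{lem:LiftedConjugation} to the relaxed Hamiltonian $\Hcal : \Pcal_2(\R^{2d} \times U) \to \R$, at the stationary realization $\bar{Z}^s$. The lifted Hamiltonian \eqref{eq:HamiltonianLag} satisfies $\widetilde{\Hpazo}(Z) = \Hcal(Z_{\sharp}\P)$ for every $Z = (X,\Psi,u_{\Lrm}) \in L^2_{\P}(\Omega,\R^{2d}\times U)$, because $\pi^1_{\sharp}(Z_{\sharp}\P) = X_{\sharp}\P$. It is therefore the law-invariant lift of $\Hcal$. We take for granted, as stated before Corollary \ref{cor:EulerianKKTHamil}, that $\Hcal$ is $C^2_b$ in the sense of Definition \ref{def:WassHess}. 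Applying \eqref{eq:LiftedHessianStructure} with $X$ replaced by $\bar{Z}^s$ and $\mu$ by $\bar{\Bnu}^s$ gives
\begin{equation*}
\nabla^2 \widetilde{\Hpazo}(\bar{Z}^s) = \Mpazo_{\bar{Z}^s} + \Cpazo_{\bar{Z}^s}\Kpazo_{\bar{\Bnu}^s}\Cpazo_{\bar{Z}^s}^{\star}.
\end{equation*}
Reading off blocks in the variables $(x,p,u)$ yields multiplication operators $\widetilde{\Mpazo}_{\alpha\beta}$ with symbols $\D_{\beta}\nabla_{\Bnu}\Hcal(\bar{\Bnu}^s)_{\alpha}(\bar{Z}^s(\omega))$, together with $\widetilde{\Kpazo}_{\alpha\beta} := \Cpazo_{\bar{Z}^s}\Kpazo_{\alpha\beta}\Cpazo_{\bar{Z}^s}^{\star}$.

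\textbf{Step 1 (decomposition \eqref{eq:LiftedBlockStructure}).} Since $\Cpazo_{\bar{Z}^s}^{\star} = \Cpazo_{\bar{Z}^s}^{\star}\Cpazo_{\bar{Z}^s}\Cpazo_{\bar{Z}^s}^{\star} = \Cpazo_{\bar{Z}^s}^{\star}\Pi^s$, we get $\widetilde{\Kpazo}_{\alpha\beta} = \widetilde{\Kpazo}_{\alpha\beta}\Pi^s$. Its range lies in $\mathrm{Ran}\,\Cpazo_{\bar{Z}^s} = \Hsf^s_{\mathrm{hor}}$. Boundedness follows from the boundedness of $\nabla^2_{\Bnu}\Hcal$, which makes the kernel operator Hilbert--Schmidt.

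\textbf{Step 2 (item (a)).} By the first-order formula \eqref{eq:HamiltonianKKT0}, the only nonlocal contribution to $\nabla_{\Bnu}\Hcal$ is $\int \nabla_{\mu}H(\bar{\mu}^s,y,q,v)(x)\,\mathrm{d}\bar{\Bnu}^s$. This term sits in the $x$-component and depends on $\Bnu$ only through the marginal $\pi^1_{\sharp}\Bnu$ and through the integrating measure. Differentiating it once more in the measure variable, the nonlocal kernel $\nabla^2_{\Bnu}\Hcal(\bar{\Bnu}^s)(z,z')$ has nonzero blocks only when at least one index equals $x$. This is exactly the bookkeeping the paper records after \eqref{eq:EulerianBlockSplitting}.

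\textbf{Step 3 (items (b)--(d)).} A multiplication operator by a $\sigma(\bar{Z}^s)$-measurable bounded symbol $m(\bar{Z}^s)$ commutes with $\Pi^s$. This is the pull-out property of conditional expectation, $\E[m(\bar{Z}^s)H\,|\,\sigma(\bar{Z}^s)] = m(\bar{Z}^s)\E[H\,|\,\sigma(\bar{Z}^s)]$, and it uses the boundedness of the second derivatives of $\Hcal$. Consequently $\widetilde{\Mpazo}_{\alpha\beta}$ leaves both $\Hsf^s_{\mathrm{hor}}$ and $\Hsf^s_{\mathrm{vert}} = \ker\Pi^s$ invariant. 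The operator $\widetilde{\Kpazo}_{\alpha\beta}\Pi^s$ maps into $\Hsf^s_{\mathrm{hor}}$ and vanishes on $\Hsf^s_{\mathrm{vert}}$, which proves (b) and (d) at once. For (c), we apply \eqref{eq:HessianUnitaryConjugation} blockwise. On $\xi\circ\bar{Z}^s$ one has $\widetilde{\Mpazo}_{\alpha\beta}(\xi\circ\bar{Z}^s) = (\Mpazo_{\alpha\beta}\xi)\circ\bar{Z}^s$ and $\Cpazo_{\bar{Z}^s}^{\star}(\xi\circ\bar{Z}^s) = \xi$. Hence the restriction equals $\Cpazo_{\bar{Z}^s}\Hpazo_{\alpha\beta}\Cpazo^{\star}_{\bar{Z}^s}$, where $\Hpazo_{\alpha\beta} = \Mpazo_{\alpha\beta} + \Kpazo_{\alpha\beta}$ as in \eqref{eq:EulerianBlockSplitting}, and the identity extends to all of $\Hsf^s_{\mathrm{hor}}$ by density \eqref{eq:AdaptedRandomVariable}.

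The main obstacle is a target-space mismatch. The blocks act between $L^2_{\P}$ spaces with values in $\R^d$ or $U$, while $\Cpazo_{\bar{Z}^s}$ is defined on maps of the full variable $z = (x,p,u)$. I will read $\Hsf^s_{\mathrm{hor}}$ componentwise as $\sigma(\bar{Z}^s)$-measurable variables valued in the relevant factor, and define $\Cpazo_{\bar{Z}^s}$ on $L^2_{\bar{\Bnu}^s}(\R^{2d}\times U,\R^k)$ for $k \in \{d,m\}$. With this convention, Lemma \ref{lem:LiftedConjugation} applies verbatim in each block. The residual technical point, checking that the symbols of $\widetilde{\Mpazo}_{\alpha\beta}$ are bounded and Borel in $z$, follows from Hypotheses \ref{hyp:H}-$(ii)$.
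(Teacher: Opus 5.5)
Your proposal is correct and follows essentially the same route as the paper. You realize $\widetilde{\Hpazo}$ as the lift of $\Hcal$, apply Lemma \ref{lem:LiftedConjugation} blockwise, and factor the kernel part through $\Pi^s = \Cpazo_{\bar{Z}^s}\Cpazo_{\bar{Z}^s}^{\star}$. You then obtain (b)--(d) from the pull-out property of conditional expectation and from $\Pi^s H = 0$ on $\Hsf^s_{\mathrm{vert}}$, which you spell out where the paper just calls these steps immediate.

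One small bookkeeping remark on Step 2: the kernel blocks $\Kpazo_{px}$ and $\Kpazo_{ux}$ come from the dependence of the local components $\nabla_p H$ and $\nabla_u H$ on $\pi^1_{\sharp}\Bnu$, not from the nonlocal term in the $x$-component. Since their input index is $x$, your conclusion is unaffected.
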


\begin{proof}
By Proposition \ref{prop:WassHess}, the lifted Hessian $\nabla^2 \widetilde{\Hpazo}(\bar{Z}^s)$ being the second derivative of a rearrangement-invariant functional, it is the sum of a multiplication operator and an integral operator. Moreover, it follows from Lemma \ref{lem:LiftedConjugation} that each block is of the form
\begin{equation*}
\widetilde{\Hpazo}_{\alpha \beta} = \widetilde{\Mpazo}_{\alpha \beta} + \Cpazo_{\bar{Z}^s} \Kpazo_{\alpha \beta} \Cpazo_{\bar{Z}^s}^{\star} 
\end{equation*}
where $\Kpazo_{\alpha \beta}$ appears in \eqref{eq:EulerianBlockSplitting}. Thence, the latter factorize through the conditional expectation onto the sigma-algebra generated by the lifted variable, which yields \eqref{eq:LiftedBlockStructure}. 

Concerning item $(a)$, the fact that the  kernel part vanishes when $\alpha,\beta \neq x$ follows directly from the structure of the Wasserstein Hessian of the relaxed Hamiltonian exposed in \eqref{eq:EulerianBlockSplitting}. Item $(b)$, namely the invariance of both $\Hsf_{\mathrm{hor}}^s$ and $\Hsf_{\mathrm{vert}}^s$ under the action of each block ,is immediate. The conjugation statement of item $(c)$ on $\Hsf_{\mathrm{hor}}^s$ is simply \eqref{eq:HessianUnitaryConjugation} from Lemma \ref{lem:LiftedConjugation} applied to each block, whereas the claim in item $(d)$ that the restriction to $\Hsf_{\mathrm{vert}}^s$ coincides with the multiplication part follows from straightforwardly from \eqref{eq:LiftedBlockStructure} the fact that $\Pi^s H = 0$ for every $H \in \Hsf_{\mathrm{vert}}^s$.
\end{proof}

These preliminary results being laid out, we are ready to state the crux of our approach, which allows to lift the hypotheses from the Eulerian problem to the Lagrangian one. 

\begin{prop}[From intrinsic Eulerian hypotheses to lifted Hilbertian hypotheses]
\label{prop:ETimpliesLT}
Assume that Hypotheses \ref{hyp:ET} hold. Then, every realization $\bar{Z}^s := (\bar{X}^s,\bar{\Psi}^s,\bar{u}_{\Lrm}^s) \in L^2_{\P}(\Omega,\R^{2d} \times U)$ of the relaxed stationary Eulerian triple $\bar{\Bnu}^s \in \Pcal_2(\R^{2d} \times U)$ satisfies Hypotheses \ref{hyp:LT}.
\end{prop}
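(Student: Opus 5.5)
The plan is to exploit the orthogonal decomposition $L^2_{\P}(\Omega,\R^d) = \Hsf_{\mathrm{hor}}^s \oplus \Hsf_{\mathrm{vert}}^s$ (and its analogues for $\R^{2d}\times U$- and $U$-valued variables), which by Lemma \ref{lem:LiftedBlockStructure}-$(b)$ reduces every block $\widetilde{\Hpazo}_{\alpha\beta}$ simultaneously. Writing $\Cpazo := \Cpazo_{\bar{Z}^s}$, each block becomes diagonal:
\begin{equation*}
\widetilde{\Hpazo}_{\alpha\beta} = \Cpazo\,\Hpazo_{\alpha\beta}\,\Cpazo^{\star}\,\Pi^s + \widetilde{\Mpazo}_{\alpha\beta}(\Id-\Pi^s),
\end{equation*}
by items $(c)$ and $(d)$. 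Here $\widetilde{\Mpazo}_{\alpha\beta}$ acts by multiplication with the symbol $\Mpazo_{\alpha\beta}$ evaluated at $\bar{Z}^s(\omega)$, since $(\bar{Z}^s)_{\sharp}\P = \bar{\Bnu}^s$. All algebraic operations in Hypotheses \ref{hyp:LT} (products, inverses, adjoints, Schur complements) preserve this block-diagonal form. Hence each object $\widetilde{\Apazo}$, $\widetilde{\Hpazo}_{\Psi u}$, $\widetilde{\Hpazo}_{uu}^{-1}$ and $\widetilde{\Cpazo}$ should be built as the direct sum of its horizontal and vertical parts, and each condition of Hypotheses \ref{hyp:LT} checked on each summand.

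First, for $(i)$: on $\Hsf_{\mathrm{hor}}^s$, the operator $\widetilde{\Hpazo}_{uu}$ is unitarily conjugated to $\Hpazo_{uu}$, which is invertible by \ref{hyp:ET}-$(i)$. On $\Hsf_{\mathrm{vert}}^s$ it equals $\widetilde{\Mpazo}_{uu}$, which is invertible by \ref{hyp:ET}-$(iv)$. Here one must note that invertibility of $\Mpazo_{uu}$ on $L^2_{\bar{\Bnu}^s}$ means its symbol has a $\bar{\Bnu}^s$-essentially bounded inverse, so the lifted multiplication operator, restricted to the invariant subspace $\Hsf_{\mathrm{vert}}^s$, is invertible too. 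The direct sum of the two inverses is bounded. Consequently $\widetilde{\Hpazo}_{uu}^{-1} = \Cpazo\Hpazo_{uu}^{-1}\Cpazo^{\star}\Pi^s + \widetilde{\Mpazo}_{uu}^{-1}(\Id-\Pi^s)$, and similarly
\begin{equation*}
\widetilde{\Apazo} = \Cpazo\Apazo_{\mathrm{hor}}\Cpazo^{\star}\Pi^s + \widetilde{\Apazo}_{\mathrm{vert}}(\Id-\Pi^s),
\end{equation*}
together with the analogous identity for $\widetilde{\Hpazo}_{X u}\widetilde{\Hpazo}_{uu}^{-1}\widetilde{\Hpazo}_{uX}-\widetilde{\Hpazo}_{XX}$, involving $\Cpazo_{\mathrm{hor}}^{\star}\Cpazo_{\mathrm{hor}}$ and $\Qpazo_{\mathrm{vert}}$. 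For $(ii)$, set
\begin{equation*}
\widetilde{\Cpazo} := \Cpazo\,\Cpazo_{\mathrm{hor}}\,\Cpazo^{\star}\,\Pi^s + \widetilde{\Cpazo}_{\mathrm{vert}}(\Id-\Pi^s).
\end{equation*}
Because the two pieces map into orthogonal invariant subspaces, the cross terms vanish and $\widetilde{\Cpazo}^{\star}\widetilde{\Cpazo}$ recovers the required identity.

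For $(iii)$, I would pick stabilizing feedbacks $F_{\mathrm{hor}}$ for $(\Apazo_{\mathrm{hor}},\Hpazo_{pu})$ and $F_{\mathrm{vert}}$ (pointwise matrices) for $(\Apazo_{\mathrm{vert}},\Mpazo_{pu})$, and lift them as $\widetilde{F} := \Cpazo F_{\mathrm{hor}}\Cpazo^{\star}\Pi^s + \widetilde{F}_{\mathrm{vert}}(\Id-\Pi^s)$. The closed-loop operator $\widetilde{\Apazo}+\widetilde{\Hpazo}_{\Psi u}\widetilde{F}$ is then block-diagonal. Its semigroup is $\Cpazo e^{t(\Apazo_{\mathrm{hor}}+\Hpazo_{pu}F_{\mathrm{hor}})}\Cpazo^{\star}\Pi^s$ on the horizontal part, which is exponentially stable because $\Cpazo$ is an isometry, and a pointwise matrix exponential on the vertical part. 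The pair $(\widetilde{\Apazo}^{\star},\widetilde{\Cpazo}^{\star})$ is handled identically.

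The main obstacle is the vertical stability. One has to show that uniform pointwise stabilizability of the symbols yields bounded measurable feedbacks $F_{\mathrm{vert}}(z)$ with a uniform bound $|e^{t(A(z)+B(z)F(z))}|\le Me^{-\beta t}$ holding $\bar{\Bnu}^s$-almost everywhere. I would obtain this from the pointwise algebraic Riccati solution $P(z)$, which is measurable in $z$ by continuity of the stabilizing Riccati solution in its coefficients and is uniformly bounded by the uniformity in \ref{hyp:ET}-$(v)$. Setting $F = -B^{\top}P$ and using a uniform Lyapunov function $P(z)$ then gives uniform decay, and this bound transfers to the restriction on $\Hsf_{\mathrm{vert}}^s$, an invariant subspace of the multiplication operator.

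A subtle point is that $\Hsf_{\mathrm{hor}}^s$ may be defined with $\sigma(\bar{Z}^s)$ rather than $\sigma(\bar{X}^s)$. The kernel parts factor through $\Cpazo_{\bar{X}^s}^{\star}$, whose range lies inside $\Hsf_{\mathrm{hor}}^s$, so the invariance in Lemma \ref{lem:LiftedBlockStructure}-$(b)$ still holds and the argument is unaffected.
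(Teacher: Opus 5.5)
Your proposal is correct and is essentially the paper's proof. It splits every block along $\Hsf_{\mathrm{hor}}^s \oplus \Hsf_{\mathrm{vert}}^s$ via Lemma \ref{lem:LiftedBlockStructure}, transfers Hypotheses \ref{hyp:ET}-$(i)$--$(iii)$ to the horizontal summand by unitary conjugation with $\Cpazo_{\bar{Z}^s}$, and uses \ref{hyp:ET}-$(iv)$--$(v)$ for the multiplication operators on the vertical summand, while spelling out the direct-sum formulas for $\widetilde{\Hpazo}_{uu}^{-1}$, $\widetilde{\Cpazo}$ and the feedbacks that the paper leaves implicit.

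One small correction to your last paragraph: some kernel blocks factor through $\Cpazo_{\bar{Z}^s}^{\star}$ rather than $\Cpazo_{\bar{X}^s}^{\star}$ (e.g.\ $\Kpazo_{xu}$, whose kernel depends on the costate $q$ of the integrated variable through $\D_{\mu} v^{\top} q$). Since $\Cpazo_{\bar{Z}^s}\Cpazo_{\bar{Z}^s}^{\star} = \Pi^s$, this still gives exactly the invariance you need.
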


\begin{proof}
By Lemma \ref{lem:LiftedBlockStructure}, all lifted operators split according to the direct sum decomposition 
\begin{equation*}
L^2_{\P}(\Omega,\R^{2d} \times U) = \Hsf_{\mathrm{hor}}^s \oplus \Hsf_{\mathrm{vert}}^s, 
\end{equation*}
and every block of $\nabla^2 \widetilde{\Hpazo}(\bar{Z^s})$ is block-diagonal with respect to the latter. On the horizontal space $\Hsf_{\mathrm{hor}}^s$, the relevant Lagrangian operators are unitarily conjugated to the intrinsic Eulerian blocks. Therefore, by Proposition \ref{prop:WassHess} and the invariance of bounded invertibility as well as exponential stabilizability and detectability under unitary equivalence, Hypotheses \ref{hyp:ET}-$(i)$, $(ii)$ and $(iii)$ directly imply that Hypotheses \ref{hyp:LT} hold when restricting the operators to the horizontal component.

On the vertical space $\Hsf_{\mathrm{vert}}^s$, all the integral operators vanish and only multiplication operators remain. Then, Hypotheses \ref{hyp:ET}-$(iv)$ and $(v)$ are exactly the finite-dimensional stabilizability and detectability assumptions needed for Hypotheses \ref{hyp:LT} -- namely the Schur complement, the Riccati factorization and the exponentially stabilizing feedbacks -- to hold on that vertical component. Since the lifted operators can be written as direct sums of their horizontal and vertical restrictions, the Hilbertian assumptions listed in Hypotheses \ref{hyp:LT} hold on the whole space.
\end{proof}

\begin{rmk}[An intrinsic tangent-space perspective]
\label{rmk:MetricTangent}
A more intrinsic formulation of the Eulerian second-order assumptions would naturally involve a relaxed tangent space to $\Pcal_2(\R^d)$, in the spirit of Gigli's metric tangent module \cite{Gigli2011} and of the recent approach developed in \cite{Bertucci2025}. Such a viewpoint is attractive, as it allows to consider all first-order directions, including the non-Monge ones which are outside of the reduced tangent space. Although it still possesses some linear-like structure, the resulting object is no longer a Hilbert space, which makes the whole Riccati theory and operator diagonalization approach quite arduous to carry out, and less transparent. That is why we opted to develop the present approach instead, relying on the Hilbertian liftings machinery.
\end{rmk}

\begin{thm}[Exponential turnpike in the Eulerian framework]
\label{thm:TurnpikeEulerian}
Let $(\bar{\nu}^s,\bar{u}_{\Erm}^s)$ be a stationary Eulerian Pontryagin triple satisfying Hypotheses \ref{hyp:ET}. %Assume in addition that $\bar{u}_{\Erm}^s$ stays at a positive distance from $\partial U$ on $\supp(\bar{\mu}^s)$. 

Then, there exist positive constants $\epsilon,\alpha,c > 0$ such that, under the smallness condition
\begin{equation}
\label{eq:SmallnessEul}
W_2(\mu^0,\bar{\mu}^s)
+ \NormLbis{\nabla_{\mu}\varphi(\bar{\mu}^s) \circ \pi^1 + \pi^2}{\R^{2d},\R^d}{\bar{\nu}^s}
+ \| \D_x \nabla_{\mu}\varphi(\bar{\mu}^s)\|_{C^0(\R^d)}
+ \|{\nabla_{\mu}^2 \varphi(\bar{\mu}^s)}\|_{C^0(\R^{2d})}
\leq \epsilon,
\end{equation}
every optimal Pontryagin triple $(\nu^*(\cdot),u_{\Erm}^*(\cdot))$ for $(\Ppazo_{\Erm})$ satisfies
\begin{equation}
\label{eq:EulerianTurnpike}
W_2(\nu^*(t),\bar{\nu}^s) + \inf_{\gamma \in \Gamma(\mu^*(t),\bar{\mu}^s)} \NormLbis{u_{\Erm}^*(t) \circ \pi^1 - \bar{u}_{\Erm}^s \circ \pi^2}{\R^d,\R^d}{\gamma}
\leq c \Big( e^{-\alpha t} + e^{-\alpha(T-t)} \Big)
\end{equation}
for all $t \in [0,T]$.
\end{thm}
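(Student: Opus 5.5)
The proof reduces the Eulerian statement to Theorem \ref{thm:TurnpikeLagrangian} by lifting both the stationary triple and the dynamic Pontryagin triple to $L^2_{\P}$. It then pushes the Lagrangian estimate back down using the fact that $W_2$ is dominated by $L^2_{\P}$ distances between realizations.

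\emph{Realization of the stationary triple.} Fix $(\bar{\nu}^s,\bar{u}^s_{\Erm})$ and choose a realization $\bar{Z}^s=(\bar{X}^s,\bar{\Psi}^s,\bar{u}^s_{\Lrm})$ with $(\bar{X}^s,\bar{\Psi}^s)_\sharp\P=\bar{\nu}^s$ and $\bar{u}^s_{\Lrm}=\bar{u}^s_{\Erm}\circ\bar{X}^s$. By Corollary \ref{cor:EulerianKKTHamil} and Proposition \ref{prop:WassGrad}, $\nabla\widetilde{\Hpazo}(\bar{Z}^s)=\nabla_{\Bnu}\Hcal(\bar{\Bnu}^s)\circ\bar{Z}^s=0$, so $\bar{Z}^s$ is a stationary triple for $(\Ppazo^s_{\Lrm})$. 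By Proposition \ref{prop:ETimpliesLT}, it satisfies Hypotheses \ref{hyp:LT}.

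\emph{Realization of the dynamic triple.} Given an optimal Eulerian Pontryagin triple $(\nu^*(\cdot),u^*_{\Erm}(\cdot))$, Proposition \ref{prop:EulerianRealization} provides a Lagrangian Pontryagin triple $(X^*,\Psi^*,u^*_{\Lrm})$ realizing it. The difficulty is that this realization must be coupled with $\bar{Z}^s$ on the same probability space. We will realize the initial datum jointly with the stationary one. Pick an optimal plan in $\Gamma_o(\mu^0,\bar{\mu}^s)$ and glue it with the disintegration of $\bar{\nu}^s$ and the costate initial law, so that $\|X^0-\bar{X}^s\|_{L^2_\P}=W_2(\mu^0,\bar{\mu}^s)$. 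This is possible because $\Omega$ is standard atomless, so any joint law on a Polish space can be realized. The realization procedure of \cite{Averboukh2025} starts from an arbitrary $X^0$ with the prescribed law, so this choice is admissible.

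\emph{Transfer of the smallness condition.} Since $\nabla\widetilde{\varphi}(\bar{X}^s)=\nabla_\mu\varphi(\bar{\mu}^s)\circ\bar{X}^s$, we get $\|\nabla\widetilde{\varphi}(\bar{X}^s)+\bar{\Psi}^s\|_{L^2_\P}=\|\nabla_\mu\varphi(\bar{\mu}^s)\circ\pi^1+\pi^2\|_{L^2_{\bar\nu^s}}$. By Proposition \ref{prop:WassHess} and \eqref{eq:LiftedHessianStructure}, $\|\nabla^2\widetilde{\varphi}(\bar{X}^s)\|$ is bounded by the sup norm of the multiplication symbol $\D_x\nabla_\mu\varphi(\bar\mu^s)$ plus the Hilbert--Schmidt-type bound $\|\nabla^2_\mu\varphi(\bar\mu^s)\|_{C^0}$ for the kernel part, since $\Cpazo_X$ is an isometry. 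Hence \eqref{eq:SmallnessEul} implies \eqref{eq:SmallnessLag} with a comparable $\epsilon$.

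\emph{Application of the Lagrangian theorem.} Theorem \ref{thm:TurnpikeLagrangian} now gives \eqref{eq:LagrangianTurnpike}. Then $W_2(\nu^*(t),\bar\nu^s)\le\|(X^*(t),\Psi^*(t))-(\bar X^s,\bar\Psi^s)\|_{L^2_\P}$, because $\gamma:=((X^*,\Psi^*)(t),(\bar X^s,\bar\Psi^s))_\sharp\P$ is an admissible plan. For the control term, take $\gamma:=(X^*(t),\bar X^s)_\sharp\P\in\Gamma(\mu^*(t),\bar\mu^s)$. Then
\[
\|u^*_{\Erm}(t)\circ\pi^1-\bar u^s_{\Erm}\circ\pi^2\|_{L^2_\gamma}=\|u^*_{\Lrm}(t)-\bar u^s_{\Lrm}\|_{L^2_\P},
\]
using $u^*_{\Lrm}(t)=u^*_{\Erm}(t)\circ X^*(t)$ a.e. This yields \eqref{eq:EulerianTurnpike}.

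\emph{Main obstacle.} The delicate point is the second step. Proposition \ref{prop:EulerianRealization} is stated for an unspecified realization, and we need the Lagrangian triple to be an optimal Pontryagin triple living on the same space as $\bar Z^s$, with the initial coupling chosen optimally. I would handle this by invoking the realization with a prescribed $X^0$. If that fails, I would use the law invariance of $(\Ppazo_{\Lrm})$ together with a measure-preserving isomorphism of $\Omega$, which transports one realization to any other with the same joint initial law of $(X^0,\bar Z^s)$. A secondary point is that the turnpike theorem is stated for optimal triples. This requires, as part of the realization statement, that the lifted triple inherits optimality, i.e.\ strong local minimality, from the Eulerian one.
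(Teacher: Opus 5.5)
Your proof has the same architecture as the paper's. You lift the stationary triple, get stationarity from Corollary~\ref{cor:EulerianKKTHamil} and Proposition~\ref{prop:WassGrad}, and get Hypotheses~\ref{hyp:LT} from Proposition~\ref{prop:ETimpliesLT}. You transfer the smallness condition through the gradient identity and the multiplication-plus-kernel bound on $\nabla^2\widetilde{\varphi}(\bar{X}^s)$, apply Theorem~\ref{thm:TurnpikeLagrangian}, and push down with the plans $((X^*(t),\Psi^*(t)),(\bar{X}^s,\bar{\Psi}^s))_{\sharp}\P$ and $(X^*(t),\bar{X}^s)_{\sharp}\P$.

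The one real difference is the order in which the two realizations are chosen, and that is exactly where your ``main obstacle'' sits. The paper first takes the Lagrangian optimal Pontryagin triple of Proposition~\ref{prop:EulerianRealization}, with whatever initial datum $X^0$ it carries. Only then does it pick $(\bar{X}^s,\bar{\Psi}^s)$ with law $\bar{\nu}^s$ and $\|X^0-\bar{X}^s\|_{L^2_{\P}(\Omega,\R^d)} \leq W_2(\mu^0,\bar{\mu}^s)+\epsilon$, via \cite[Lemma 2.1]{BertucciL2024}. This is legitimate because stationarity of the lift and the conclusion of Proposition~\ref{prop:ETimpliesLT} hold for \emph{every} realization of $\bar{\Bnu}^s$, so the stationary side can be adjusted freely. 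No prescribed initial datum is needed in the dynamic realization. An exactly optimal coupling is also unnecessary, since only the smallness of $\|X^0-\bar{X}^s\|$ enters \eqref{eq:SmallnessLag}.

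Your primary route is to realize $(X^0,\bar{X}^s,\bar{\Psi}^s)$ jointly from scratch and then pose $(\Ppazo_{\Lrm})$ with this $X^0$. That is acceptable at the paper's level of rigor, since $(\Ppazo_{\Lrm})$ is defined for an arbitrary initial datum of law $\mu^0$ and Proposition~\ref{prop:EulerianRealization} is stated for it. It does force you to re-choose $\bar{Z}^s$ together with $X^0$, though. With $\bar{Z}^s$ fixed beforehand as in your first step, an exactly optimal coupling cannot in general be realized on $\Omega$. This happens, e.g., if $\sigma(\bar{X}^s)$ is the whole $\sigma$-algebra and no optimal plan is induced by a map defined on $\bar{\mu}^s$; this is why the paper settles for $+\epsilon$. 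In particular, fixing $\bar{Z}^s$ first does not make the realization independent of the data.

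Your fallback is not sound as stated. Two random variables with the same law on a standard atomless space need not be conjugate by a measure-preserving isomorphism: on $\Omega=[0,1]$, $\omega\mapsto\omega$ and $\omega\mapsto 2\omega \bmod 1$ are not. So realizations can only be transported approximately, which again is all you need.

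Finally, gluing in a costate initial law is superfluous. $\Psi^*$ is produced by the realization, and $\|\nabla\widetilde{\varphi}(\bar{X}^s)+\bar{\Psi}^s\|_{L^2_{\P}(\Omega,\R^d)}$ depends only on $\bar{\nu}^s$.
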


\begin{proof}
Let $(\mu^*(\cdot),u_{\Erm}^*(\cdot)) \in \AC^2([0,T],\Pcal_2(\R^d)) \times \Ucal_{\Erm}[\mu^*(\cdot)]$ be an optimal trajectory-control pair for $(\Ppazo_{\Erm})$ and $\nu^*(\cdot) \in \AC^2([0,T],\Pcal_2(\R^{2d}))$ be the state-costate curve given by Theorem \ref{thm:PMPEulerian}. By Proposition \ref{prop:EulerianRealization}, there exists an optimal Pontryagin triple $(X^*(\cdot),\Psi^*(\cdot),u_{\Lrm}^*(\cdot)) \in \AC^2([0,T],L^2_{\P}(\Omega,\R^{2d}) \times \Ucal_{\Lrm}$ for $(\Ppazo_{\Lrm})$ such that
\begin{equation*}
(X^*(t),\Psi^*(t))_{\sharp}\P = \nu^*(t) \qquad \text{and} \qquad u_{\Lrm}^*(t,\omega) = u_{\Erm}^*(t,X^*(t,\omega))
\end{equation*}
for $\Lcal^1 \times \P$-almost every $(t,\omega) \in [0,T] \times \Omega$.
%Fix a realization $(\bar{X}^s,\bar{\Psi}^s)$ of $\bar{\nu}^s$ and set $\bar{u}_{\Lrm}^s = \bar{u}_{\Erm}^s \circ \bar{X}^s$. By \cite[Proposition 2.1]{Cavagnari2022}, after possibly changing the realization on the reference probability space, one may additionally impose $\NormLbis{X^0 - \bar{X}^s}{\Omega,\R^d}{\P} = W_2(\mu^0,\bar{\mu}^s)$.
Besides, following e.g. \cite[Lemma 2.1]{BertucciL2024} (see \cite{Cavagnari2025} for a more general variant), there exists a pair of random variables $(\bar{X}^s,\bar{\Psi}^s) \in L^2_{\P}(\Omega,\R^{2d})$ such that  $(\bar{X}^s,\bar{\Psi}^s)_{\sharp} \P = \bar{\nu}^s$ and 
\begin{equation*}
\NormLbis{X^0 - \bar{X}^s}{\Omega,\R^d}{\P} \leq W_2(\mu^0,\bar{\mu}^s) + \epsilon. 
\end{equation*}
Moreover, by Proposition \ref{prop:WassGrad}, we also have that 
\begin{equation*}
\NormLbis{\nabla \widetilde{\varphi}(\bar{X}^s) + \bar{\Psi}^s}{\Omega,\R^d}{\P}
= \NormLbis{\nabla_{\mu}\varphi(\bar{\mu}^s) \circ \pi^1 + \pi^2}{\R^{2d},\R^d}{\bar{\nu}^s},
\end{equation*}
whereas Proposition \ref{prop:WassHess} yields
\begin{equation*}
\Vert \nabla^2 \widetilde{\varphi}(\bar{X}^s) \Vert_{\Lpazo(L^2_{\P}(\Omega,\R^d))}
\leq \| \D_x \nabla_{\mu}\varphi(\bar{\mu}^s)\|_{C^0(\R^d)}
+ \|{\nabla_{\mu}^2 \varphi(\bar{\mu}^s)}\|_{C^0(\R^{2d})}. 
\end{equation*}
Hence, the Eulerian smallness assumption \eqref{eq:SmallnessEul} with $\epsilon > 0$ directly implies the Lagrangian one \eqref{eq:SmallnessLag} with $2 \epsilon > 0$. Besides, setting $\bar{u}_{\Lrm}^s := \bar{u}_{\Erm}^s \circ \bar{X}^s$ further entails $(\bar{X}^s,\bar{\Psi}^s,\bar{u}_{\Lrm}^s)_{\sharp}\P = \bar{\Bnu}^s$, and it thus follows from Proposition \ref{prop:WassGrad} that 
\begin{equation*}
\nabla \widetilde{\Hpazo}(\bar{X}^s,\bar{\Psi}^s,\bar{u}_{\Lrm}^s) = \nabla_{\Bnu} \Hcal(\bar{\Bnu}^s) \circ (\bar{X}^s,\bar{\Psi}^s,\bar{u}_{\Lrm}^s) = 0,
\end{equation*}
so that the triple $(\bar{X}^s,\bar{\Psi}^s,\bar{u}_{\Lrm}^s) \in L^2_{\P}(\Omega,\R^{2d} \times U)$ is stationary for $(\Ppazo^s_{\Lrm})$, and satisfies Hypotheses \ref{hyp:LT} as a consequence of Proposition \ref{prop:ETimpliesLT} above. Therefore, Theorem \ref{thm:TurnpikeLagrangian} applies, and yields
\begin{equation*}
\NormLbis{X^*(t)-\bar{X}^s}{\Omega,\R^d}{\P}
+ \NormLbis{\Psi^*(t)-\bar{\Psi}^s}{\Omega,\R^d}{\P}
+ \NormLbis{u_{\Lrm}^*(t)-\bar{u}_{\Lrm}^s}{\Omega,U}{\P}
\leq c \Big( e^{-\alpha t} + e^{-\alpha(T-t)} \Big)
\end{equation*}
for all $t \in [0,T]$. Denoting by $\gamma_t := (X^*(t),\bar{X}^s)_{\sharp}\P \in \Gamma(\mu^*(t),\bar{\mu}^s)$, there lastly remains to observe that 
\begin{equation*}
W_2(\nu^*(t),\bar{\nu}^s) \leq \NormLbis{(X^*(t),\Psi^*(t))-(\bar{X}^s,\bar{\Psi}^s)}{\Omega,\R^{2d}}{\P},
\end{equation*}
by construction, whereas
\begin{equation*}
\inf_{\gamma \in \Gamma(\mu^*(t),\bar{\mu}^s)} \NormLbis{u_{\Erm}^*(t) \circ \pi^1 - \bar{u}_{\Erm}^s \circ \pi^2}{\R^d,\R^d}{\gamma} \leq \NormLbis{u_{\Erm}^*(t) \circ \pi^1 - \bar{u}_{\Erm}^s \circ \pi^2}{\R^d,\R^d}{\gamma_t}
= \NormLbis{u_{\Lrm}^*(t)-\bar{u}_{\Lrm}^s}{\Omega,U}{\P}.
\end{equation*}
Combining the two previous estimates yields \eqref{eq:EulerianTurnpike} and closes the proof of Theorem \ref{thm:TurnpikeEulerian}.
\end{proof}

%\begin{rmk}[Boundary stationary controls]
%If the stationary control lies on $\partial U$, the maximizing feedback may become saturated and lose the smooth structure that underlies the previous turnpike argument. This is not merely technical. Even for one-dimensional constrained linear-quadratic problems, one can construct families of optimal trajectories for which no uniform exponential turnpike estimate with horizon-independent prefactor holds. The interiority assumption in Theorem \ref{thm:TurnpikeEulerian} precisely excludes this phenomenon.
%\end{rmk}

\section{Conclusion and perspectives}\label{section:Conclusion}

In this paper, we have established exponential turnpike theorems for a class of nonlinear deterministic meanfield optimal control problems, both in a lifted Lagrangian framework, and in an intrinsic Eulerian formulation over Wasserstein spaces. The article combines three ingredients: a systematic comparison between the lifted and Wasserstein first- and second-order differential structures, static first-order optimality conditions in the Lagrangian and Eulerian settings, and an operator-theoretic turnpike analysis driven by the Hessian of the stationary Hamiltonian. From the methodological viewpoint, one of the main messages is that the Eulerian second-order assumptions naturally split into a horizontal part, transferred to the lifted space by unitary conjugation, and a vertical part, which only involves multiplication operators and can therefore be checked pointwisely. This decomposition clarifies both the scope and the limitations of the usual intrinsic Wasserstein Hessians when singular perturbations are present in the lift. In our opinion, several interesting questions remain open.

\begin{enumerate}[wide, labelindent=0pt]
\item[$\diamond$] A first natural direction to investigate is to replace the present Hilbertian lifting procedure by a fully intrinsic second-order theory on a relaxed metric tangent space to $\Pcal_2(\R^d)$, in the spirit of recent developments on tangent structures beyond Monge directions and on regularised approximations of the squared Wasserstein distance, see for instance \cite{Bertucci2025, BertucciL2024}. \vspace{-0.15cm} 
\item[$\diamond$] A second direction would be to treat boundary stationary controls and genuinely saturated optimal arcs, for which the smooth Riccati diagonalization used here is no longer directly available. This would connect the present analysis with the broader constrained/singular turnpike literature and with recent constrained or singular mean-field models, see, e.g., \cite{CohenSun2024, HuLyu2025, TrelatZuazuaSurvey}. \vspace{-0.15cm} 
\item[$\diamond$] A third direction would be to revisit the static and dynamic arguments at the level of occupation measures or relaxed controls, in order to incorporate stronger local topologies on Eulerian controls and possibly recover more intrinsic compactness properties, see for example \cite{BahlaliMezerdiMezerdi2018, BouveretDumitrescuTankov2020, Henrion2024, Lasserre2008}. This might also allow to study the fully Eulerian turnpike problem, wherein the constraint of the static problem is the vanishing of the divergence instead of that of the full vector field. 
\end{enumerate}

Finally, another natural extension would be to move beyond deterministic first-order continuity dynamics and towards controlled McKean-Vlasov diffusions, common-noise models, and hierarchical particle / kinetic / hydrodynamic regimes. In this context, it would be very interesting to connect the present ODE-based analysis with the rapidly developing PDE-side turnpike theory for first- and second-order mean field games and mean field control systems, and more generally with the multiscale passage from interacting particle systems to kinetic or hydrodynamic descriptions, see \cite{BayraktarJian2026, Cecchin2026, CirantDeBernardi2025, CirantPorretta2021, HertyZhou2025, Paul2022}. \medskip

\noindent {\small \textbf{Acknowledgements:} The authors thank Alessandro Pinzi for helping fix a small mistake in the original proof of Theorem 4.8. B.B.-W. was partially supported by the French government under the France 2030 program, reference ANR-11-IDEX-0003 within the OI H-Code. D.S. was supported by 
a MCSA Cofund fellowship under the programme ``UNIPhD''.}

%%%%%%%%%%%%%%%%%%%%%%%%%%%%%%%%%%%%%%%%%%%%%%%%%%%%%%%%%%%%%%%%%%%%%%%
%					         NEW SECTION AHEAD    					  %
%%%%%%%%%%%%%%%%%%%%%%%%%%%%%%%%%%%%%%%%%%%%%%%%%%%%%%%%%%%%%%%%%%%%%%%

\addcontentsline{toc}{section}{Appendix}
\section*{Appendix}

\setcounter{Def}{0} 
\setcounter{section}{0}
\renewcommand{\thesection}{A} 
\renewcommand{\thesubsection}{A} 

\subsection{Proof of Proposition \ref{prop:WassHess}}
\label{section:AppendixHess}

\setcounter{Def}{0} \renewcommand{\thethm}{A.\arabic{Def}} 
\setcounter{equation}{0} \renewcommand{\theequation}{A.\arabic{equation}}

In this appendix, we provide an elementary proof of the second-order differentiability and Hessian representation formula for $\widetilde{\Phi} : L^2_{\P}(\Omega,\R^d) \to \R$. The latter is based on the following technical lemma, which is a minor adaptation of the arguments from \cite[Proposition 5.1]{SetValuedPMP}. 

\begin{lem}[Small-o estimates in weighted Wasserstein metric]
\label{lem:Small-o}
For every $F \in C^0_b(\R^d,\R^{d \times d})$ and any two measures $\mu,\nu \in \Pcal_2(\R^{2d})$, it holds that 
\begin{equation*}
\INTDom{\Big(F(x+s(y-x))-F(x) \Big)(x-y)}{\R^{2d}}{\gamma(x,y)} = o(W_{2,\gamma}(\mu,\nu))
\end{equation*}
for each $\gamma \in \Gamma(\mu,\nu)$ and all $s \in [0,1]$. 
\end{lem}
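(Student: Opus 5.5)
The plan is to factor out one power of $W_{2,\gamma}(\mu,\nu)$ by the Cauchy--Schwarz inequality, and then show that the remaining factor tends to $0$ by a tightness and uniform continuity argument. Throughout, I read the statement in the natural way (measures on $\R^d$, plans $\gamma \in \Gamma(\mu,\nu) \subset \Pcal(\R^{2d})$), with $\mu$ fixed and $\mathrm{o}(\cdot)$ meaning $W_{2,\gamma}(\mu,\nu) \to 0$. I will also check that the resulting estimate is uniform in $s \in [0,1]$.

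\emph{Step 1 (Cauchy--Schwarz).} For every $\gamma \in \Gamma(\mu,\nu)$ and $s \in [0,1]$, I bound
\begin{equation*}
\bigg| \INTDom{\Big(F(x+s(y-x))-F(x)\Big)(x-y)}{\R^{2d}}{\gamma(x,y)} \bigg| \leq \bigg( \INTDom{\big|F(x+s(y-x))-F(x)\big|^2}{\R^{2d}}{\gamma(x,y)} \bigg)^{1/2} W_{2,\gamma}(\mu,\nu).
\end{equation*}
It therefore suffices to prove that the quantity
\begin{equation*}
I_s(\gamma) := \INTDom{|F(x+s(y-x))-F(x)|^2}{\R^{2d}}{\gamma(x,y)}
\end{equation*}
tends to $0$ as $W_{2,\gamma}(\mu,\nu) \to 0$, uniformly in $s$.

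\emph{Step 2 (splitting the domain).} Fix $\eta > 0$.
\begin{itemize}
\item Since $\mu$ is a fixed Radon measure, I choose a compact set $K \subset \R^d$ with $\mu(\R^d \setminus K) \leq \eta$.
\item Since $F$ is continuous, it is uniformly continuous on the compact set $K_1 := K + \overline{B}(0,1)$; let $\omega_F$ denote its modulus of continuity there.
\item I fix $\delta \in (0,1]$ with $\omega_F(\delta) \leq \eta$.
\end{itemize}
I then split $\R^{2d}$ into three regions:
\begin{itemize}
\item $A_1 := \{x \in K,\ |y-x| \leq \delta\}$. Here both $x$ and $x+s(y-x)$ lie in $K_1$ and are at distance at most $\delta$, so the integrand is at most $\omega_F(\delta)^2 \leq \eta^2$.
\item $A_2 := \{x \notin K\}$. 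The integrand is bounded by $4\|F\|_\infty^2$, and $\gamma(A_2) = \mu(\R^d \setminus K) \leq \eta$, because the first marginal of $\gamma$ is $\mu$.
\item $A_3 := \{|y-x| > \delta\}$. By Chebyshev's inequality, $\gamma(A_3) \leq W_{2,\gamma}(\mu,\nu)^2/\delta^2$, and the integrand is again bounded by $4\|F\|_\infty^2$.
\end{itemize}
Adding the three contributions gives
\begin{equation*}
I_s(\gamma) \leq \eta^2 + 4\|F\|_\infty^2 \Big( \eta + W_{2,\gamma}(\mu,\nu)^2/\delta^2 \Big),
\end{equation*}
and this bound holds uniformly in $s \in [0,1]$.

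\emph{Step 3 (conclusion).} Letting $W_{2,\gamma}(\mu,\nu) \to 0$ gives $\limsup I_s(\gamma) \leq \eta^2 + 4\|F\|_\infty^2\,\eta$. Since $\eta > 0$ was arbitrary, $I_s(\gamma) \to 0$, and together with Step 1 this yields the claimed $\mathrm{o}(W_{2,\gamma}(\mu,\nu))$ estimate.

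The only delicate point is that $F$ is merely continuous and bounded, not uniformly continuous. This is exactly why the tightness of the fixed base measure $\mu$ is needed in Step 2, and why the $\mathrm{o}$ is in general not uniform with respect to $\mu$.
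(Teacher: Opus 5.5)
Your proof is correct and follows essentially the same route as the paper's: tightness to localize to a compact set, uniform continuity of $F$ there for small displacements, and Chebyshev's inequality together with the boundedness of $F$ for large displacements. The differences are minor but favourable: you factor out $W_{2,\gamma}(\mu,\nu)$ by Cauchy--Schwarz at the outset, and you take tightness from the fixed first marginal $\mu$ with the fattened compact $K+\overline{B}(0,1)$, whereas the paper takes it from $\gamma$ itself; this makes the uniformity of the estimate over $\gamma\in\Gamma(\mu,\cdot)$ and $s\in[0,1]$ explicit.
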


\begin{proof}
To begin with, note that by Prokhorov's theorem (see e.g. \cite[Theorem 5.1.3]{AGS}), there exists for each $\gamma \in \Gamma(\mu,\nu)$ and every $\epsilon >0$ a radius $R_{\epsilon} > 0$ such that 
\begin{equation*}
\gamma \Big(\R^{2d} \setminus B_{R_{\epsilon}}(0)^2 \Big) \leq \frac{\epsilon}{8 \NormC{F}{0}{\R^d}}, 
\end{equation*}
which implies in particular that 
\begin{equation*}
\begin{aligned}
& \INTDom{\Big|F(x+s(y-x))-F(x)\Big||x-y|}{\R^{2d}}{\gamma(x,y)} \\
& \hspace{0.45cm}  \leq \INTDom{\Big|F(x+s(y-x))-F(x)\Big||x-y|}{B_{R_{\epsilon}}(0)^2}{\gamma(x,y)} 
+ 2 \NormC{F}{0}{\R^d} \INTDom{|x-y|}{\R^{2d} \setminus B_{R_{\epsilon}}(0)^2}{\gamma(x,y)} \\
& \hspace{0.45cm}  \leq \INTDom{|F(x+s(y-x))-F(x)||x-y|}{B_{R_{\epsilon}}(0)^2}{\gamma(x,y)} + \frac{\epsilon}{4} W_{2,\gamma}(\mu,\nu)
\end{aligned}
\end{equation*}
by H\"older's inequality. Focusing now on the first term, observe that $F : B_{R_{\epsilon}}(0) \to \R$ is uniformly continuous, so that there exists $\eta_{\epsilon} > 0$ for which 
$|F(x+s(y-x))-F(x)| < \frac{\epsilon}{4}$
whenever $x,y \in B_{R_{\epsilon}}(0)$ satisfy $|x-y| < \eta_{\epsilon}$. This observation allows us to further infer that 
\begin{equation*}
\begin{aligned}
& \INTDom{|F(x+s(y-x))-F(x)||x-y|}{B_{R_{\epsilon}}(0)^2}{\gamma(x,y)} \\
& \hspace{0.45cm} \leq \frac{\epsilon}{4} \INTDom{|x-y|}{\R^{2d}}{\gamma(x,y)} + \INTDom{|F(x+s(y-x))-F(x)||x-y|}{B_{R_{\epsilon}}(0)^2 \, \cap\{ |x-y| > \eta_{\epsilon}\}}{\gamma(x,y)} \\
& \hspace{0.45cm}  \leq \frac{\epsilon}{4} W_{2,\gamma}(\mu,\nu) + \frac{4 R_{\epsilon} \NormC{F}{0}{\R^d}}{\eta_{\epsilon}^2} W_{2,\gamma}^2(\mu,\nu)
\end{aligned}
\end{equation*}
where we used H\"older's and Chebyshev's inequalities. Combining all the above, we finally obtain that 
\begin{equation*}
\INTDom{|F(x+s(y-x))-F(x)||x-y|}{\R^{2d}}{\gamma(x,y)} \leq \frac{\epsilon}{2} W_{2,\gamma}(\mu,\nu) + \frac{4 R_{\epsilon} \NormC{F}{0}{\R^d}}{\eta_{\epsilon}^2} W_{2,\gamma}^2(\mu,\nu). 
\end{equation*}
In particular, this implies that for each $\epsilon > 0$, there exists a $\delta := \epsilon \eta_{\epsilon}^2/(8R_{\epsilon} \NormC{F}{0}{\R^d}) > 0$ for which 
\begin{equation*}
\INTDom{|F(x+s(y-x))-F(x)||x-y|}{\R^{2d}}{\gamma(x,y)} \leq \epsilon W_{2,\gamma}(\mu,\nu)
\end{equation*}
whenever $W_{2,\gamma}(\mu,\nu) \leq \delta$, which precisely amounts to the latter being a small-o of $W_{2,\gamma}(\mu,\nu)$. 
\end{proof}

\begin{proof}[Proof of Proposition \ref{prop:WassHess}]
Similarly to what we did in the proof of Proposition \ref{prop:WassGrad}, consider the plan $\Bgamma := (X,G,H)_{\sharp} \P \in \Pcal_2(\R^{3d})$, and observe that
\begin{equation}
\label{eq:TaylorHessian0}
\begin{aligned}
\langle \nabla \widetilde{\Phi}(X+G),H \rangle_{L^2_{\P}(\Omega,\R^d)} & = \INTDom{\Big\langle \nabla_{\mu} \Phi((X+G)_{\sharp} \P) \circ (X+G)(\omega) , H(\omega)\Big\rangle}{\Omega}{\P(\omega)} \\
& = \INTDom{\big\langle \nabla_{\mu} \Phi((X+G)_{\sharp} \P)(x+g) , h \big\rangle}{\R^{3d}}{\Bgamma(x,g,h)}.
\end{aligned}
\end{equation}
Our goal now is to carry out a first-order expansion of the previous expression. To begin with, note that it follows from the usual Taylor theorem that
\begin{multline}
\label{eq:TaylorHessian1}
\nabla_{\mu} \Phi((X+G)_{\sharp} \P)(x+g) = \nabla_{\mu} \Phi((X+G)_{\sharp} \P)(x) + \D_x \nabla_{\mu}\Phi((X+G)_{\sharp} \P)(x)g \\
+ \INTSeg{\Big( \D_x \nabla_{\mu}\Phi((X+G)_{\sharp} \P)(x+sg)- \D_x \nabla_{\mu}\Phi((X+G)_{\sharp} \P)(x) \Big)g}{s}{0}{1}.
\end{multline}
for all $(x,g,h) \in \R^{3d}$. Since we assumed that $\D_x \nabla_{\mu} \Phi((X+G)_{\sharp} \P) \in C^0_b(\R^d,\R^{d \times d})$, it follows from Lemma \ref{lem:Small-o} above that 
\begin{equation}
\label{eq:TaylorHessian2}
\begin{aligned}
\INTDom{\INTSeg{\bigg\langle \Big( \D_x \nabla_{\mu}\Phi((X+G)_{\sharp} \P)(x+s g)- \D_x \nabla_{\mu}\Phi((X+G)_{\sharp} \P)(x) \Big)  g , h \bigg\rangle}{s}{0}{1}}{\R^{3d}}{\Bgamma(x,g,h)} = \mathrm{o} \Big( \NormLbis{G}{\Omega}{\P} \Big)
\end{aligned}
\end{equation}
and similarly 
\begin{equation}
\label{eq:TaylorHessian3}
\INTDom{\Big\langle \D_x \nabla_{\mu}\Phi((X+G)_{\sharp} \P)(x)  g , h \Big\rangle}{\R^{3d}}{\Bgamma(x,g,h)} = \INTDom{\Big\langle \D_x \nabla_{\mu}\Phi(X_{\sharp} \P)(x)  g , h \Big\rangle}{\R^{3d}}{\Bgamma(x,g,h)} + \mathrm{o} \Big( \NormLbis{G}{\Omega}{\P} \Big). 
\end{equation}
Thus, upon combining \eqref{eq:TaylorHessian0}, \eqref{eq:TaylorHessian1}, \eqref{eq:TaylorHessian2} and \eqref{eq:TaylorHessian3}, we arrive at the following intermediate expansion 
\begin{multline}
\label{eq:TaylorHessian5}
\langle \nabla \widetilde{\Phi}(X+G),H \rangle_{L^2_{\P}(\Omega,\R^d)} = \INTDom{\Big\langle \nabla_{\mu} \Phi((X+G)_{\sharp} \P)(x) , h \Big\rangle}{\R^{3d}}{\Bgamma(x,g,h)} \\
+ \INTDom{\Big\langle \D_x \nabla_{\mu}\Phi(X_{\sharp} \P)(x)  g , h \Big\rangle}{\R^{3d}}{\Bgamma(x,g,h)} + \mathrm{o} \Big( \NormLbis{G}{\Omega}{\P} \Big).
\end{multline}
At this stage, it can be deduced straightforwardly from the definition of Wasserstein differential that 
\begin{multline}
\label{eq:TaylorHessian6}
\nabla_{\mu} \Phi((X+G)_{\sharp}\P)(x)
= \nabla_{\mu} \Phi(X_{\sharp}\P)(x) + \INTDom{\nabla^2_{\mu} \Phi(X_{\sharp} \P)(x,y)v}{\R^{3d}}{\Bgamma(y,v,w)} \\
+ \INTSeg{\INTDom{\Big( \nabla^2_{\mu} \Phi \Big( (X+sG)_{\sharp} \P \Big)(x,y+sv) - \nabla^2_{\mu} \Phi(X_{\sharp} \P)(x,y) \Big)  v}{\R^{3d}}{\Bgamma(y,v,w)}}{s}{0}{1} 
\end{multline}
for all $x \in \R^d$, where it can be shown yet again by using Lemma \ref{lem:Small-o} that 
\begin{multline}
\label{eq:TaylorHessian7}
\INTDom{\INTDom{\INTSeg{\bigg\langle \Big( \nabla^2_{\mu} \Phi \Big( (X+sG)_{\sharp} \P \Big)(x,y+sv) - \nabla^2_{\mu} \Phi(X_{\sharp} \P)(x,y) \Big) v , h \bigg\rangle}{s}{0}{1}}{\R^{3d}}{\Bgamma(y,v,w)}}{\R^{3d}}{\Bgamma(x,g,h)}  \\
= \mathrm{o} \Big( \NormLbis{G}{\Omega}{\P} \Big). 
\end{multline}
Whence, we finally get by merging \eqref{eq:TaylorHessian5}, \eqref{eq:TaylorHessian6} and \eqref{eq:TaylorHessian7} that 
\begin{equation*}
\begin{aligned}
\langle \nabla \widetilde{\Phi}(X+G),H \rangle_{L^2_{\P}(\Omega,\R^d)} & = \INTDom{\langle \nabla_{\mu} \Phi(X_{\sharp} \P)(x) , h \rangle}{\R^{3d}}{\Bgamma(x,g,h)} + \INTDom{\Big\langle \D_x \nabla_{\mu}\Phi(X_{\sharp} \P)(x)  g , h \Big\rangle}{\R^{3d}}{\Bgamma(x,g,h)} \\
& \hspace{0.45cm} + \INTDom{\INTDom{\Big\langle \nabla^2_{\mu} \Phi(X_{\sharp} \P)(x,y)v , h \Big\rangle}{\R^{3d}}{\Bgamma(x,g,h)}}{\R^{3d}}{\Bgamma(y,v,w)} + \mathrm{o} \Big( \NormLbis{G}{\Omega}{\P} \Big) \\
& = \langle \nabla \widetilde{\Phi}(X),H \rangle_{L^2_{\P}(\Omega,\R^d)} + \INTDom{\big\langle \D_x \nabla_{\mu}\Phi(X_{\sharp} \P)(X(\omega))  G(\omega) , H(\omega) \big\rangle}{\Omega}{\P(\omega)} \\
& \hspace{0.45cm} + \INTDom{\INTDom{\Big\langle \nabla^2_{\mu} \Phi(X_{\sharp} \P)(X(\omega),X(\theta)) G(\theta), H(\omega) \Big\rangle}{\Omega}{\P(\theta)}}{\Omega}{\P(\omega)} + \mathrm{o} \Big( \NormLbis{G}{\Omega}{\P} \Big)
\end{aligned}
\end{equation*}
which due to our working assumptions finally yields that $\widetilde{\Phi} :L^2_{\P}(\Omega,\R^d) \to \R$ is twice continuously Fréchet differentiable and that its Hessian is given by \eqref{eq:HessianFormula}. 
\end{proof}

%%%%%%%%%%%%%%%%%%%%%%%%%%%%%%%%%%%%%%%%%%%%%%%%%%%%%%%%%%%%%%%%%%%%%%%%
%							 BILBIOGRAPHY AHEAD		                   %
%%%%%%%%%%%%%%%%%%%%%%%%%%%%%%%%%%%%%%%%%%%%%%%%%%%%%%%%%%%%%%%%%%%%%%%%

{\footnotesize 
\paragraph*{{\footnotesize Acknowledgements:}} 
}

\bibliographystyle{alpha}
\small
{\footnotesize
\bibliography{../../ControlWassersteinBib}
}

\end{document}